
\documentclass[12pt]{amsart}
\usepackage{amssymb,latexsym, mathtools,tikz}
\usetikzlibrary{arrows,decorations.markings}
\tikzstyle arrowstyle=[scale=1]
\tikzstyle directed=[postaction={decorate,
decoration={markings,mark=at position .65 with {\arrow[arrowstyle]{stealth}}}}]
\usepackage{blkarray}
\usepackage{xypic}
\usepackage[top=35mm, bottom=25mm, left=30mm, right=30mm]{geometry}

\usepackage[utf8]{inputenc}

\begin{document}

\title[Polarizations of powers of graded maximal ideals]{Polarizations of powers of
  graded maximal ideals}
\author{Ayah Almousa}
\address{Cornell University}
\email{aka66@cornell.edu}
\urladdr{http://math.cornell.edu/~aalmousa}
\thanks{AA was partially supported by the NSF GRFP under Grant No. DGE-1650441.}

\author{Gunnar Fl{\o}ystad}
\address{Universitetet i Bergen}
\email{gunnar@mi.uib.no}

\author{Henning Lohne}
\address{Universitetet i Bergen}
\email{lohne.henning@gmail.com}


\keywords{polarization, maximal ideal power, Artinian monomial ideal, isotone map, Alexander dual, simplicial ball, polynomial ring}
\subjclass[2010]{Primary: 13F20,13F55; Secondary: 55U10,05E40}
\date{\today}
\begin{abstract}
We give a complete combinatorial characterization of all possible polarizations of powers of the graded maximal ideal $(x_1,x_2,\dotsc,x_m)^n$ of a polynomial ring in $m$ variables. We also give a combinatorial description of the Alexander duals of such polarizations. In the three variable case $m=3$ and also in the power two case $n=2$ the descriptions are easily visualized and 
we show that every polarization defines a (shellable) simplicial ball.
We give conjectures relating to topological properties and to
algebraic geometry, in particular that any polarization of an Artinian monomial ideal defines a simplicial ball.
\end{abstract}
\maketitle


\theoremstyle{plain}
\newtheorem{theorem}{Theorem}[section]
\newtheorem{corollary}[theorem]{Corollary}
\newtheorem*{main}{Main Theorem}
\newtheorem{lemma}[theorem]{Lemma}
\newtheorem{proposition}[theorem]{Proposition}
\newtheorem{conjecture}[theorem]{Conjecture}

\theoremstyle{definition}
\newtheorem{definition}[theorem]{Definition}
\newtheorem{fact}[theorem]{Fact}
\newtheorem{question}[theorem]{Question}
\newtheorem{problem}[theorem]{Problem}
\newtheorem{consequence}[theorem]{Consequence}

\theoremstyle{remark}
\newtheorem{notation}[theorem]{Notation}
\newtheorem{remark}[theorem]{Remark}
\newtheorem{example}[theorem]{Example}
\newtheorem{claim}{Claim}
\newtheorem{situation}[theorem]{Situation}


\newcommand{\psp}[1]{{{\bf P}^{#1}}}
\newcommand{\psr}[1]{{\bf P}(#1)}
\newcommand{\op}{{\mathcal O}}
\newcommand{\opw}{\op_{\psr{W}}}

\newcommand{\ini}[1]{\text{in}(#1)}
\newcommand{\gin}[1]{\text{gin}(#1)}
\newcommand{\kr}{{\Bbbk}}
\newcommand{\pd}{\partial}
\newcommand{\vardel}{\partial}
\renewcommand{\tt}{{\bf t}}


\newcommand{\coh}{{{\text{{\rm coh}}}}}


\newcommand{\modv}[1]{{#1}\text{-{mod}}}
\newcommand{\modstab}[1]{{#1}-\underline{\text{mod}}}

\newcommand{\sut}{{}^{\tau}}
\newcommand{\sumit}{{}^{-\tau}}
\newcommand{\til}{\thicksim}

\newcommand{\totp}{\text{Tot}^{\prod}}
\newcommand{\dsum}{\bigoplus}
\newcommand{\dprod}{\prod}
\newcommand{\lsum}{\oplus}
\newcommand{\lprod}{\Pi}

\newcommand{\La}{{\Lambda}}

\newcommand{\sirstj}{\circledast}

\newcommand{\she}{\EuScript{S}\text{h}}
\newcommand{\cm}{\EuScript{CM}}
\newcommand{\cmd}{\EuScript{CM}^\dagger}
\newcommand{\cmri}{\EuScript{CM}^\circ}
\newcommand{\cler}{\EuScript{CL}}
\newcommand{\clerd}{\EuScript{CL}^\dagger}
\newcommand{\clerri}{\EuScript{CL}^\circ}
\newcommand{\gor}{\EuScript{G}}
\newcommand{\cF}{\mathcal{F}}
\newcommand{\cG}{\mathcal{G}}
\newcommand{\cM}{\mathcal{M}}
\newcommand{\cE}{\mathcal{E}}
\newcommand{\cI}{\mathcal{I}}
\newcommand{\cP}{\mathcal{P}}
\newcommand{\cK}{\mathcal{K}}
\newcommand{\cS}{\mathcal{S}}
\newcommand{\cC}{\mathcal{C}}
\newcommand{\cO}{\mathcal{O}}
\newcommand{\cJ}{\mathcal{J}}
\newcommand{\cU}{\mathcal{U}}
\newcommand{\cQ}{\mathcal{Q}}
\newcommand{\cX}{\mathcal{X}}
\newcommand{\mm}{\mathfrak{m}}

\newcommand{\dlim} {\varinjlim}
\newcommand{\ilim} {\varprojlim}

\newcommand{\CM}{\text{CM}}
\newcommand{\Mon}{\text{Mon}}


\newcommand{\Kom}{\text{Kom}}


\newcommand{\EH}{{\mathbf H}}
\newcommand{\res}{\text{res}}
\newcommand{\Hom}{\text{Hom}}
\newcommand{\inhom}{{\underline{\text{Hom}}}}
\newcommand{\Ext}{\text{Ext}}
\newcommand{\Tor}{\text{Tor}}
\newcommand{\ghom}{\mathcal{H}om}
\newcommand{\gext}{\mathcal{E}xt}
\newcommand{\id}{\text{{id}}}
\newcommand{\im}{\text{im}\,}
\newcommand{\codim} {\text{codim}\,}
\newcommand{\resol}{\text{resol}\,}
\newcommand{\rank}{\text{rank}\,}
\newcommand{\lpd}{\text{lpd}\,}
\newcommand{\coker}{\text{coker}\,}
\newcommand{\supp}{\mathrm{supp}\,}
\newcommand{\Ad}{A_\cdot}
\newcommand{\Bd}{B_\cdot}
\newcommand{\Fd}{F_\cdot}
\newcommand{\Gd}{G_\cdot}


\newcommand{\sus}{\subseteq}
\newcommand{\sups}{\supseteq}
\newcommand{\pil}{\rightarrow}
\newcommand{\vpil}{\leftarrow}
\newcommand{\rpil}{\leftarrow}
\newcommand{\lpil}{\longrightarrow}
\newcommand{\inpil}{\hookrightarrow}
\newcommand{\pils}{\twoheadrightarrow}
\newcommand{\projpil}{\dashrightarrow}
\newcommand{\dotpil}{\dashrightarrow}
\newcommand{\adj}[2]{\overset{#1}{\underset{#2}{\rightleftarrows}}}
\newcommand{\mto}[1]{\stackrel{#1}\longrightarrow}
\newcommand{\vmto}[1]{\stackrel{#1}\longleftarrow}
\newcommand{\mtoelm}[1]{\stackrel{#1}\mapsto}
\newcommand{\bihom}[2]{\overset{#1}{\underset{#2}{\rightleftarrows}}}
\newcommand{\eqv}{\Leftrightarrow}
\newcommand{\impl}{\Rightarrow}

\newcommand{\iso}{\cong}
\newcommand{\te}{\otimes}
\newcommand{\into}[1]{\hookrightarrow{#1}}
\newcommand{\ekv}{\Leftrightarrow}
\newcommand{\equi}{\simeq}
\newcommand{\isopil}{\overset{\cong}{\lpil}}
\newcommand{\equipil}{\overset{\equi}{\lpil}}
\newcommand{\ispil}{\isopil}
\newcommand{\vvi}{\langle}
\newcommand{\hvi}{\rangle}
\newcommand{\susneq}{\subsetneq}
\newcommand{\sgn}{\text{sign}}


\newcommand{\xd}{\check{x}}
\newcommand{\ortog}{\bot}
\newcommand{\tL}{\tilde{L}}
\newcommand{\tM}{\tilde{M}}
\newcommand{\tH}{\tilde{H}}
\newcommand{\tvH}{\widetilde{H}}
\newcommand{\tvh}{\widetilde{h}}
\newcommand{\tV}{\tilde{V}}
\newcommand{\tS}{\tilde{S}}
\newcommand{\tT}{\tilde{T}}
\newcommand{\tR}{\tilde{R}}
\newcommand{\tf}{\tilde{f}}
\newcommand{\ts}{\tilde{s}}
\newcommand{\tp}{\tilde{p}}
\newcommand{\tr}{\tilde{r}}
\newcommand{\tfst}{\tilde{f}_*}
\newcommand{\empt}{\emptyset}
\newcommand{\bfa}{{\mathbf a}}
\newcommand{\bfb}{{\mathbf b}}
\newcommand{\bfd}{{\mathbf d}}
\newcommand{\bfl}{{\mathbf \ell}}
\newcommand{\bfx}{{\mathbf x}}
\newcommand{\bfm}{{\mathbf m}}
\newcommand{\bfv}{{\mathbf v}}
\newcommand{\bft}{{\mathbf t}}
\newcommand{\bbfa}{{\mathbf a}^\prime}
\newcommand{\la}{\lambda}
\newcommand{\bfen}{{\mathbf 1}}
\newcommand{\bfe}{{\mathbf 1}}
\newcommand{\ep}{\epsilon}
\newcommand{\en}{r}
\newcommand{\tu}{s}
\newcommand{\Sym}{\text{Sym}}
\newcommand{\uchi}{{\overline \chi}}
\newcommand{\fus}{{\mathbf 0}}

\newcommand{\ome}{\omega_E}

\newcommand{\bevis}{{\bf Proof. }}
\newcommand{\demofin}{\qed \vskip 3.5mm}
\newcommand{\nyp}[1]{\noindent {\bf (#1)}}
\newcommand{\demo}{{\it Proof. }}
\newcommand{\demodone}{\demofin}
\newcommand{\parg}{{\vskip 2mm \addtocounter{theorem}{1}  
                   \noindent {\bf \thetheorem .} \hskip 1.5mm }}

\newcommand{\lcm}{{\mathrm{lcm}}}


\newcommand{\dl}{\Delta}
\newcommand{\cdel}{{C\Delta}}
\newcommand{\cdelp}{{C\Delta^{\prime}}}
\newcommand{\dlst}{\Delta^*}
\newcommand{\Sdl}{{\mathcal S}_{\dl}}
\newcommand{\lk}{\text{lk}}
\newcommand{\lkd}{\lk_\Delta}
\newcommand{\lkp}[2]{\lk_{#1} {#2}}
\newcommand{\del}{\Delta}
\newcommand{\delr}{\Delta_{-R}}
\newcommand{\dd}{{\dim \del}}
\newcommand{\Del}{\Delta}

\renewcommand{\aa}{{\bf a}}
\newcommand{\bb}{{\bf b}}
\newcommand{\cc}{{\bf c}}
\newcommand{\xx}{{\bf x}}
\newcommand{\yy}{{\bf y}}
\newcommand{\zz}{{\bf z}}
\newcommand{\mv}{{\xx^{\aa_v}}}
\newcommand{\mF}{{\xx^{\aa_F}}}

\newcommand{\Symm}{\text{Sym}}
\newcommand{\pnm}{{\bf P}^{n-1}}
\newcommand{\opnm}{{\go_{\pnm}}}
\newcommand{\ompnm}{\omega_{\pnm}}

\newcommand{\pn}{{\bf P}^n}
\newcommand{\hele}{{\mathbb Z}}
\newcommand{\nat}{{\mathbb N}}
\newcommand{\rasj}{{\mathbb Q}}
\newcommand{\bfone}{{\mathbf 1}}

\newcommand{\dt}{\bullet}
\newcommand{\disk}{\scriptscriptstyle{\bullet}}

\newcommand{\cxF}{F_\dt}
\newcommand{\pol}{f}

\newcommand{\Rn}{{\mathbb R}^n}
\newcommand{\An}{{\mathbb A}^n}
\newcommand{\frg}{\mathfrak{g}}
\newcommand{\PW}{{\mathbb P}(W)}

\newcommand{\pos}{{\mathcal Pos}}
\newcommand{\g}{{\gamma}}

\newcommand{\Vaa}{V_0}
\newcommand{\Bp}{B^\prime}
\newcommand{\Bpp}{B^{\prime \prime}}
\newcommand{\bbp}{\mathbf{b}^\prime}
\newcommand{\bbpp}{\mathbf{b}^{\prime \prime}}
\newcommand{\bp}{{b}^\prime}
\newcommand{\bpp}{{b}^{\prime \prime}}

\newcommand{\oLa}{\overline{\Lambda}}
\newcommand{\ov}[1]{\overline{#1}}
\newcommand{\ovv}[1]{\overline{\overline{#1}}}
\newcommand{\tm}{\tilde{m}}
\newcommand{\po}{\bullet}

\newcommand{\surj}[1]{\overset{#1}{\twoheadrightarrow}}
\newcommand{\Supp}{\mathrm{supp}}
\newcommand{\bfw}{{\mathbf w}}
\newcommand{\bff}{\mathbf f}
\newcommand{\bfn}{\mathbf n}
\newcommand{\obfa}{\overline{\bfa}}
\newcommand{\nn}{{\mathbf n}}
\newcommand{\bfc}{{\mathbf c}}
\DeclarePairedDelimiter\abs{\lvert}{\rvert}%
\newcommand{\ayah}[1]{{\color{magenta} \sf AYAH: [#1]}}
\newcommand{\var}{\text{Var}}
\newcommand{\MM}{{\mathcal M}}
\newcommand{\ben}{{\mathbf 1}}

\newcommand{\bX}{{\mathbf X}}
\newcommand{\bY}{{\mathbf Y}}
\newcommand{\bZ}{{\mathbf Z}}

\newcommand{\Xv}{\check{X}}
\newcommand{\Yv}{\check{Y}}
\newcommand{\Zv}{\check{Z}}
\newcommand{\Xvp}{\check{X}^{\prime}}
\newcommand{\LS}{\mathrm{LS}}

\def\CC{{\mathbb C}}
\def\GG{{\mathbb G}}
\def\ZZ{{\mathbb Z}}
\def\NN{{\mathbb N}}
\def\RR{{\mathbb R}}
\def\OO{{\mathbb O}}
\def\QQ{{\mathbb Q}}
\def\VV{{\mathbb V}}
\def\PP{{\mathbb P}}
\def\EE{{\mathbb E}}
\def\FF{{\mathbb F}}
\def\AA{{\mathbb A}}

\newcommand{\oR}{\overline{R}}
\newcommand{\bfu}{{\mathbf u}}

\section{Introduction}

The notion of polarization of a monomial ideal has been a standard technique
in algebraic geometry and combinatorial commutative algebra for many years.
It gives a way to ``separate'' the powers of variables in a monomial ideal
while keeping
its homological properties. An early success was Hartshorne's proof the
connectedness of the Hilbert scheme \cite{Ha} using distractions, which are essentially
a form of polarization. Polarization gives a way to make combinatorial objects
from any monomial ideal: it gives a squarefree monomial ideal and thus a
simplicial complex, from which one may obtain algebraic invariants such as graded
Betti numbers in a combinatorial/topological way.
Although many authors have considered and used polarizations in various forms,
there has been no focused, systematic study of the variety of polarizations,
and of the characterizing properties of such ideals.
The purpose of this article is to undertake this task.
As we shall see, this raises both interesting questions in itself, as well
as connections to combinatorial topology and to algebraic geometry manifesting
in quite general conjectures.

\medskip
Before proceeding let us briefly give simple examples and explain the
notion of polarization.
Consider the ideal
 \begin{equation} \label{eq:intro-Ito}
 I = (x_1,x_2)^2 = (x_1^2, x_1x_2,x_2^2) \sus k[x_1, x_2].
 \end{equation}
 It polarizes to the ideal 
 \[ J = (x_{11}x_{12}, x_{11}x_{21}, x_{21}x_{22}) \sus
 k[x_{11},x_{12},x_{21},x_{22}]. \]
 The quotient ring $k[x_1,x_2]/I$ then comes from $k[x_{11},x_{12},x_{21},x_{22}]/J$
 by cutting down by a regular sequence of variable differences 
 \[ x_{11} - x_{12}, x_{21} - x_{22}. \]
 These two graded rings have the same homological properties, such as
 codimension and codepth, and the same graded Betti numbers.
 
 In general, for a monomial ideal $I$ one gets the polarization $J$ by 
 taking each minimal generator of $I$
 \begin{equation} \label{eq:intro-mon} x_1^{a_1}x_2^{a_2} \cdots x_m^{a_m} 
 \end{equation} and making a minimal 
 generator 
 \[ (x_{11}x_{12}\cdots x_{1a_1}) \cdot (x_{21}x_{22} \cdots x_{2a_2}) \cdot \cdots \cdot (x_{m1}\cdots x_{ma_m}) \]
 of $J$. We call this the {\it standard polarization}.
 
 However, for a monomial ideal $I$ in $k[x_1, \ldots x_m]$, there may be many other ways to get a squarefree monomial ideal $J$ in a larger polynomial ring $k[x_{ij}]$ such
 that $k[x_i]/I$ comes from $k[x_{ij}]/J$ by dividing out by a regular
 sequence of variable differences $x_{i,j} - x_{i,j^\prime}$.
 
 For instance, if $I$ is a strongly stable ideal, one has the ``b-polarization'', 
 \cite{NR09}, \cite{Yan12} which
 from the ideal \eqref{eq:intro-Ito} constructs the ideal
 \[ J^b = (x_{11}x_{12}, x_{11}x_{22}, x_{21}x_{22}) \sus
 k[x_{11},x_{12},x_{21},x_{22}]. \]
 (In this special case the ideals $J$ and $J^b$ are isomorphic, but this is not
 so in general.) The b-polarization takes the minimal generator \eqref{eq:intro-mon}
 and makes a minimal generator
 \[ (x_{1,1} \cdots x_{1,a_1}) \cdot (x_{2,a_1+1} \cdots x_{2,a_1+a_2}) \cdot \cdots 
 \cdot (x_{m,a_1 + \cdots + a_{m-1} +1} \cdots x_{m,a_1 + \cdots + a_m}) \]
 so the second index runs trough the integers from  $1$ to $a_1 + \cdots + a_m$.

 The notion of shifting operator is a form of polarization. It has been studied
 by several authors, \cite{Mur07} in the setting of squeezed spheres, and
 in \cite{Yan12}. 
 Recently letterplace ideals
 associated to poset ideals of $P$-partitions, \cite{Flo19} and \cite{FGH17}
 have provided large classes which are new polarizations of Artinian monomial
 ideals.

 \medskip
 \subsection{Polarizations of powers of the graded
   maximal ideal.}
 We combinatorially
 describe all possible polarizations of powers of the graded maximal ideal 
 \begin{equation} \label{eq:intro-I}
 I = (x_1, \ldots, x_m)^n \sus k[x_1, \ldots, x_m].
 \end{equation} 
 
 These maximal ideal powers are the ideals in
 $k[x_1, \ldots, x_m]$ with maximal symmetry. They are $GL(n)$-invariant. 
 A polarization is somehow a way of breaking this symmetry, but still keeping
 the homological properties. Thus we classify all symmetry breaks of this
 ideal. (Let us also mention that powers of graded maximal ideals have been studied
 from another combinatorial perspective in \cite{Wel02}, using discrete Morse theory to make
 cellular resolutions.)

   Let $\Delta_m(n)$ be the lattice simplex with vertices given by all $\bfa = 
   (a_1, \ldots, a_m) \in \NN_0^m$ with $\sum_{i=1}^m a_i = n$. Its vertices
   are in one-one correspondence with minimal generators of $(x_1, \ldots, x_m)^n$. See Figure \ref{fig:intro-D33} for $\Delta_3(3)$. It is divided into smaller triangles, and of these triangles, three
   are pointing down; we refer to these as {\it down-triangles}.
\begin{figure}
\begin{tikzpicture}
\draw (-3,0)--(3,0);
\draw (-2,1.6)--(2,1.6);
\draw (-1,3.2)--(1,3.2);
\draw (-3,0)--(0,4.8);
\draw (3,0)--(0,4.8) ;
\draw (-1,0)--(1,3.2);
\draw (1,0)--(-1,3.2);
\draw (-1,0)--(-2,1.6);
\draw (1,0)--(2,1.6);
\filldraw[black] (0,4.8) circle (2pt)  node[anchor=south] at (0,4.9){(3,0,0)};
\filldraw[black] (-3,0) circle (2pt)  node[anchor=east] at (-3.1,0){(0,3,0)};
\filldraw[black] (3,0) circle (2pt)  node[anchor=west] at (3.1,0){(0,0,3)};
\end{tikzpicture}
\caption{}
\label{fig:intro-D33}
\end{figure}
    By going upwards along the edges in this diagram, Figure \ref{fig:intro-D33}, we get a partial order $\geq_1$ on $\Delta_m(n)$. If the monomial \eqref{eq:intro-mon} polarizes to a monomial \[ m(\bfa) = m_1(\bfa) \cdots m_m(\bfa) \]
   where $m_i(\bfa)$ maps to $x_i^{a_i}$, let $X_1(\bfa)$ be the variables in 
   $m_1(\bfa)$. Then $X_1$ can be considered as a map from $\Delta_m(n)$ to the 
   boolean poset of subsets of the $x_{1j}$-variables. Similarly, we get maps $X_i$ for each $i = 1, \ldots, m$. For any polarization $J$ of $I$ in \eqref{eq:intro-I}, it turns out that
   each $X_i$ is an isotone map for a partial order $\geq_i$ on $\Delta_m(n)$.
   Conversely, given the maps $\{X_i\}$, we can construct monomial generators 
   $m(\bfa)$ of an ideal $J$. When is this a polarization of $I$?
   
   For a given edge in $\Delta_m(n)$ 
   between $\bfa$ and $\bfb$, we call the edge a {\it linear syzygy edge}
   if there is a linear syzygy between the monomials $m(\bfa)$ and $m(\bfb)$. 
   Our main result, Theorem \ref{thm:LS-XD}, says that the maps $\{X_i\}$ determine a polarization of $I$ if and only if the 
   {\it linear syzygy} edges for these maps contain a spanning tree of the
   edge graph (this is a complete graph) of each
  higher-dimensional ``down-triangle" of $\Delta_m(n)$.
   
    Two special cases are worth attention due to the easy visualization of the various polarizations. First, the polarizations of $(x_1,x_2,x_3)^n$, which are
    easily visualized in terms of Figure \ref{fig:intro-D33}, see Corollary
    \ref{cor:LS-m3} and Example \ref{eks:LS-m3}. Secondly, the polarizations of
    $(x_1,x_2, \ldots, x_m)^2$, which are in one-to-one correspondence with trees on $(m+1)$ vertices, Theorem \ref{thm:degto-T} and Examples
    \ref{ex:degto-fire} and \ref{ex:degto-five}.

\medskip
\subsection{Polarizations of Artinian monomial ideals.}
A nice result on the topological properties of polarizations by
S.Murai \cite{Mur11} is that the standard polarization of any
Artinian monomial ideal gives a simplicial ball. The boundary is a simplicial
sphere with a simply described Stanley-Reisner ideal,
which generalizes Bier spheres, \cite{BZ}. This
construction has again been comprehensively generalized by A. D'Ali,
A.Nematbakhsh and the second author in \cite{AFN19}, by considering letterplace
ideals. 

We here give the full simple conjecture, Conjecture \ref{con:conj-ball},
that via the Stanley-Reisner correspondence, any polarization
of an Artinian monomial ideal gives a simplicial ball
with a simple and naturally
described Stanley-Reisner ideal of its boundary, a simplicial sphere.
We show that all polarizations of $(x_1, x_2, x_3)^n$ 
and of $(x_1, \ldots, x_m)^2$
give simplicial balls, by showing that the Alexander duals of all such polarizations have linear quotients.
In a forthcoming paper we show the
squeezed spheres of \cite{Ka} and the generalized Bier spheres both fit into one
and the same framework of polarization and deseparation which we give here, see Section \ref{sec:sep}.


Relating to algebraic geometry
we conjecture, Conjecture \ref{conj:conj-defo}, that every first order deformation of any polarization of any Artinian monomial ideal lifts to a global deformation. In particular, such a polarization would be a smooth point on
any Hilbert scheme. If true,
this would give a plethora of components of the Hilbert scheme even for the
same Hilbert polynomial.

\medskip
\subsection{Alexander duals of polarizations.} If $J$ is any polarization
of an Artinian monomial ideal $I$ of $k[x_1, \ldots, x_m]$, then its Alexander
dual $J^\vee$ will be generated by ``colored'' monomials of degree $m$, monomials
of the form 
\[ x_{1j_1}x_{2j_2} \cdots x_{mj_m} \]
where for each $i = 1, \ldots, m$, we have one variable $x_{i,j_i}$ from the class of $i$-variables (color $i$). We call these {\it rainbow monomials}. 
The class of ideals generated by rainbow monomials and with 
$m$-linear resolution is precisely the class which is Alexander dual to the
class of polarizations of Artinian monomial ideals in $m$ variables, Proposition
\ref{pro:conjRainbow}. A concise criterion for ideals generated by rainbow
monomials to have linear resolution is given by A.Nematbakhsh in \cite{Nem}, and
we recall it in Theorem \ref{thm:conjRainbow}.

We describe the Alexander dual of any polarization of
the maximal ideal power $(x_1, \ldots, x_m)^n$, in terms of the isotone maps
$X_i$. As it turns out, the argument involves only a baby-form $\chi_i$ of the isotone maps $X_i$, where $\chi_i$ is an isotone map from $\Delta_m(n)$ to the poset $\{0 < 1\}$.
This description of the Alexander dual, however, leaves something to be desired concerning transparency; for instance, it is not obvious that the number of generators is actually always
$\binom{n+m-1}{m}$. We discuss problems concerning ideals generated by rainbow monomials in Section \ref{sec:conj}.

\medskip
\subsection{Organization of the paper.}
In Section \ref{sec:sep} we recall the notions of separations, separated models and polarizations. We show some basic results on polarizations of Artinian monomial ideals and introduce the isotone maps $X_i$.
In Section \ref{sec:conj} we discuss various conjectures and problems that have come up
during our investigations.
Section \ref{sec:lin} contains our main result concerning the complete
combinatorial classification of all polarizations of $(x_1, \ldots, x_m)^n$.
Examples are given for the three-variable case.
A short Section \ref{sec:trevar} classifies for the three-variable the isotone maps $\Delta_3(n) \pil B(n)$.
In Section \ref{sec:degto} we consider the degree two case and show that polarizations of 
$(x_1, \ldots, x_m)^2$ are in one-to-one correspondence with trees on $(m+1)$ vertices.
Section \ref{sec:AD} gives the Alexander duals of polarizations of maximal ideal powers, and in Section 
\ref{sec:LQ} we show that polarizations of $(x_1,x_2,x_3)^m$ are shellable, which implies that they define simplicial balls by the Stanley-Reisner correspondence.

\medskip
\noindent {\it Note.} The results in Section
\ref{sec:degto}, Section \ref{sec:trevar}, and Theorem \ref{thm:LS-XD} in the special case of three variables are essentially found in the unpublished preprint \cite{Loh13} by the 
third author. 

\section{Separations of monomial ideals}
\label{sec:sep}
We recall the basic notions of separation of a monomial ideals and  separated models, as introduced in \cite{FGH17}. We also define a polarization of 
a monomial ideal as a separation which is a squarefree monomial ideal. 
   We consider Artinian monomial ideals and show that for these the notion of polarization and separated models are the same. We introduce the isotone maps
   $X_i$ from the lattice simplex $\Delta_m(n)$ to the Boolean poset $B(n)$, which are our
   main gadgets to classify all the polarizations of maximal ideal powers.
   
\medskip
If $R$ is a set, let $k[x_R]$ be the polynomial ring in the variables 
$x_r$ where $r \in R$. If $S \pil R$ is a map of sets, it induces a $k$-algebra
homomorphism $k[x_S] \pil k[x_R]$ by mapping $x_s$ to $x_r$ if $s \mapsto r$.

\subsection{Separations and polarizations}
\begin{definition}
Let $R^\prime \mto{p} R$ be a surjection of finite sets with the
cardinality of $R^\prime$ one more than that of $R$. Let $r_1$ and $r_2$
be the two distinct elements of $R^\prime$ which map to a single
element $r$ in $R$. Let $I$ be a monomial ideal in the polynomial ring
$k[x_R]$ and $J$ a monomial ideal in $k[x_{R^\prime}]$. We say
$J$ is a {\it simple separation} of $I$ if the following holds:
\begin{itemize}
\item[i.] The monomial ideal $I$ is the image of $J$ by the map
$k[x_{R^\prime}] \pil k[x_R]$.
\item[ii.] Both the variables $x_{r_1}$ and $x_{r_2}$ occur in some minimal
generators of $J$ (usually in distinct generators).
\item[iii.] The variable difference $x_{r_1} - x_{r_2}$ is a non-zero divisor
in the quotient ring $k[x_{R^\prime}]/J$.
\end{itemize}

More generally, if $R^\prime \mto{p} R$ is a surjection of finite sets
and $I \sus k[x_R]$ and $J \sus k[x_{R^\prime}]$ are monomial ideals such
that $J$ is obtained by a succession of simple separations of $I$,
$J$ is a {\it separation} of $I$.
 If $J$ has no further separation, we call $J$ a {\it separated model}
(of $I$).
\end{definition}

A simple, but for us significant observation is that the minimal
generators of the separation $J$ and the ideal $I$
are in one-one correspondence. More generally
the graded Betti numbers of $J$ and $I$ are the same, since we get
from $k[x_{R^\prime}]/J$ to $k[x_R]/I$ by dividing out by a regular sequence of linear forms.

In \cite{AHB16} it is shown that simple separations may be considered as 
deformations of the ideal $I$.

Any monomial ideal may be separated to its standard polarization. So clearly
any separated model is a squarefree monomial ideal. The standard polarization
may, however, be further separable, so it may not be a separated model.

\begin{example} \label{ex:seppol} Consider $I = (x^2y^2, x^2z^2, y^2z^2)$ in $k[x,y,z]$.
The standard polarization is
\[ \tilde{I} = (x_1x_2y_1y_2, x_1x_2z_1z_2, y_1y_2z_1z_2). \]
This may be further separated to
\[ J = (x_1x_2y_1y_2, x^\prime_1x^\prime_2z_1z_2, y_1y_2z_1z_2). \]
\end{example}

\begin{definition}
  Let $I \sus k[x_R]$ be a monomial ideal and $R^\prime \pil R$ be a surjection
  of finite sets. An ideal $J \sus k[x_{R^\prime}]$ is a {\it polarization}
  of $I$ if $J$ is squarefree and a separation of $I$.
\end{definition}

This general notion of polarization is likely first defined in \cite{Yan12}.
By the example above it is not true that any polarization is a
separated model. However, we shall see in Corollary \ref{cor:sep-artin}
that for Artinian monomial ideals, these notions are equivalent.

\medskip
We state a general lemma which will be useful later.

\begin{lemma} \label{lem:sepx01}
  Let $I$ be a monomial ideal in $k[x_0,x_1, \ldots, x_m]$
  such that each generator of $I$ is squarefree in the $x_0$-variable.
  Then if $(x_0 - x_1) \cdot f$ is in $I$, then for every monomial
  $m$ in $f$ we have that $x_0m$ and $x_1m$ are in $I$.
\end{lemma}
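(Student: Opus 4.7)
The plan is to induct on the number of distinct monomials of $f$. The base case $f = c\, m_1$ is immediate: $(x_0-x_1)f = c(x_0 m_1 - x_1 m_1)$ has two distinct monomial terms, each of which must lie in $I$.

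For the inductive step I would single out a monomial $m^*$ of $f$ of maximal $x_0$-degree, with coefficient $c^*$, and show separately that both $x_0 m^*$ and $x_1 m^*$ lie in $I$. For $x_0 m^*$, note that in the expansion $(x_0-x_1)f = \sum_i c_i x_0 m_i - \sum_i c_i x_1 m_i$ the term $c^* x_0 m^*$ can be cancelled only by a term $-c_j x_1 m_j$ with $m_j = x_0 m^*/x_1$; but then $m_j$ would have $x_0$-degree one greater than $m^*$, contradicting maximality. Hence $x_0 m^*$ survives with nonzero coefficient, and must lie in $I$.

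For $x_1 m^*$ I would split into two cases. If $m^*$ has $x_0$-degree zero, then by maximality no monomial of $f$ involves $x_0$, so $x_1 m^* \neq x_0 m_j$ for any $j$, and $x_1 m^*$ again survives in $(x_0-x_1)f$. If instead $x_0 \mid m^*$, pick a generator $g$ of $I$ dividing $x_0 m^*$; by the squarefree-in-$x_0$ hypothesis one may write $g = x_0^\varepsilon g'$ with $\varepsilon \in \{0,1\}$ and $g'$ free of $x_0$. When $\varepsilon = 0$, $g$ is coprime to $x_0$, so $g \mid x_0 m^*$ forces $g \mid m^*$, and hence $g \mid x_1 m^*$. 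When $\varepsilon = 1$, the divisibility $g \mid x_0 m^*$ gives $g' \mid m^*$, and since $x_0 \mid m^*$ we obtain $g = x_0 g' \mid x_1 m^*$. Either way $x_1 m^* \in I$. This is the step where the squarefree-in-$x_0$ hypothesis is essential; without it the case $\varepsilon = 1$ with $x_0 \nmid m^*$ would not be controllable.

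Having obtained $x_0 m^*, x_1 m^* \in I$, I would pass to $f' = f - c^* m^*$, which has one fewer monomial and satisfies $(x_0-x_1)f' = (x_0-x_1)f - c^*(x_0 m^* - x_1 m^*) \in I$, so the inductive hypothesis takes care of the remaining monomials. The main obstacle to a direct argument is the cancellation that can occur in $(x_0-x_1)f$ between a term $x_0 m_i$ and a term $x_1 m_j$: the extremal choice of $m^*$ rules this out for $x_0 m^*$, while the squarefree-in-$x_0$ assumption on generators is precisely what rescues the more delicate case of $x_1 m^*$.
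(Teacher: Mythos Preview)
Your argument is correct. Both your proof and the paper's hinge on the same idea: a term of maximal $x_0$-degree cannot be cancelled in $(x_0 - x_1)f$, and the squarefree-in-$x_0$ hypothesis then lets one descend. The organization differs: the paper writes $f = \sum_p x_0^p f_p$ with each $f_p$ free of $x_0$ and peels off whole $x_0$-homogeneous layers from the top, first obtaining $x_0 f_p \in I$ for all $p$ and only afterwards deducing $x_1 f \in I$ globally from $(x_0-x_1)f - x_0 f \in I$. You instead induct on the number of monomials, extracting a single $m^*$ of maximal $x_0$-degree and proving \emph{both} $x_0 m^* \in I$ and $x_1 m^* \in I$ before passing to $f - c^* m^*$; your case split on $\varepsilon \in \{0,1\}$ for a generator dividing $x_0 m^*$ is where you invoke the squarefree hypothesis, whereas the paper uses it in the form ``$x_0^{a+1} f_a \in I \Rightarrow x_0 f_a \in I$''. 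Your route is slightly more self-contained per monomial; the paper's is a bit more streamlined once one sees that $x_0 f \in I$ gives $x_1 f \in I$ in one stroke.
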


\begin{proof}
  Let $f= x_0^a f_a + x_0^{a-1}f_{a-1} + \cdots + f_0$ where each $f_p$
  have no $x_0$-terms. Then if
  $(x_0 - x_1)f$ is in $I$, the only terms with $x_0^{a+1}$ are the
  terms in $x_0^{a+1} f_a$, and so these are in $I$ since we are in a
  $\ZZ^m$-graded setting. But since $I$ is squarefree in $x_0$,
  we have $x_0f_a$ in $I$ and so $x_0^a f_a$ in $I$. In this way we may
  peel off and get that all terms $x_0^pf_p$ are in $I$ for $p \geq 1$.
  
  Then in $(x_0 - x_1) f_0$, the terms with $x_0$ are those in $x_0f_0$.
  Hence $x_0f_0$ is in $I$ and so $x_0 f$ is in $I$. Again since $I$ is multigraded, each 
  monomial term $x_0m$ is in $I$.
  We also get $x_1 f \in I$ and then each $x_1m \in I$.
  \end{proof}

\subsection{Polarizations of Artinian monomial ideals}
We consider an Artinian monomial ideal $I \sus k[x_1, \ldots, x_m]$.
This is simply a monomial ideal such that for every index $i$, some
power $x_i^{n_i}$ is a minimal generator of $I$.
Let $\Xv_i = \{ x_{i1}, x_{i2}, \ldots, x_{in_i^\prime} \}$ be a set of variables.
We get a polynomial ring whose variables
are those in the union of all these variables, and a homomorphism
\[ \pi : k[\Xv_1, \Xv_2, \ldots, \Xv_m] \pil k[x_1, \ldots, x_m], \]
by mapping every variable in $\Xv_i$ to $x_i$. 

In a polarization $J \sus k[\Xv_1, \ldots, \Xv_m]$ of $I$,
we separate each monomial generator
$x^{\bfa} = x_1^{a_1}x_2^{a_2} \cdots x_m^{a_m}$ to squarefree monomials
\begin{equation} \label{eq:sepma} m(\bfa) = m_1(\bfa) \cdot m_2(\bfa) \cdots m_m(\bfa),
\end{equation}
where $m_i(\bfa)$ is a squarefree monomial of degree $a_i$ in variables from $\Xv_i$. 
Considering $x_i^{n_i}$ we see that we must have $n_i^\prime \geq n_i$.
We shall shortly show, Remark \ref{rem:sep-ni}, that we may take $n_i^\prime = n_i$.

Starting from $K[\Xv_1, \ldots, \Xv_m]/J$ we get the quotient ring
$k[x_1, \ldots, x_m]/I$ be dividing out by a regular sequence consisting
of variable differences $x_{ip} - x_{iq}$. For each $i$ we choose $(n_i-1)$
linearly independent such variable differences.  Any such sequence of
variable differences in any order will do.

We may get intermediate separations of $I$ as follows. Choose
surjections $p_i : \Xv_i \pil \Xv_i^\prime$. We get a map of polynomial rings
\[  k[\Xv_1, \ldots, \Xv_m] \pil k[\Xv_1^\prime, \ldots, \Xv_m^\prime]. \]
The image of the polarization $J$ is an ideal $I^\prime$ in
$k[\Xv_1^\prime, \ldots, \Xv_m^\prime]$ and $I^\prime$ is a separation of $I$.
We then get from
$k[\Xv_1, \ldots, \Xv_m]/J$ to $k[\Xv^\prime_1, \ldots, \Xv^\prime_m]/I$ by dividing
out by a regular sequence of variable differences $x_{ia} - x_{ib}$ where
for each $i$, $x_{ia}$ and $x_{ib}$ are in the same fiber $p_i^{-1}(x^\prime)$
of $p_i$, and we have $|p_i^{-1}(x^\prime)|-1$ linearly independent such
variable differences for each fiber.

\medskip
The following lemma has some key consequences.

\begin{lemma} \label{lem:sep-artin}
  Let $x^\bfa$ and $x^\bfb$ be minimal generators of a monomial
  ideal $I$ and $m(\bfa)$ and $m(\bfb)$ the corresponding generators
  in a polarization of $I$.

  Fix an index $i$. If $a_i \leq b_i$ and $a_j \geq b_j$ for every $j \neq i$,
  then the $i$'th part $m_i(\bfa)$ divides $m_i(\bfb)$.
\end{lemma}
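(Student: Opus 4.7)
The plan is to prove a stronger statement by induction on the number of simple separations taking $I$ to the given separated ideal $\tilde I$: namely, that the divisibility $\tilde m_i(\bfa)\mid \tilde m_i(\bfb)$ holds in \emph{any} separation of $I$ (not only polarizations), under the stated hypothesis on exponent vectors. The base case $\tilde I = I$ is immediate, since $\tilde m_i(\bfa) = x_i^{a_i}$ divides $x_i^{b_i} = \tilde m_i(\bfb)$ whenever $a_i\le b_i$.

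For the inductive step, write $\tilde I$ as a simple separation of $\tilde I^{\mathrm{prev}}$ splitting a variable $x_r$ of class $i'$ into $x_{r_1},x_{r_2}$. When $i'\ne i$ the $i$-components of all generators are unchanged and the conclusion transfers verbatim from the IH. The substantive case is $i'=i$. Write $\tilde m^{\mathrm{prev}}_i(\bfa) = x_r^\alpha N_a$ and $\tilde m_i(\bfa) = x_{r_1}^{\alpha_1}x_{r_2}^{\alpha_2}N_a$ with $\alpha_1+\alpha_2=\alpha$ (and $N_a$ free of $x_r,x_{r_1},x_{r_2}$), and analogously for $\bfb$. The IH delivers $\alpha\le\beta$ and $N_a\mid N_b$, so what remains is to promote these to $\alpha_1\le\beta_1$ and $\alpha_2\le\beta_2$.

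This is where the third axiom of a simple separation enters: $x_{r_1}-x_{r_2}$ is a non-zero-divisor modulo $\tilde I$. Since $\tilde I$ is $\mathbb Z^n$-graded, this becomes the combinatorial statement that for any monomial $M$, $x_{r_1}M\in\tilde I$ and $x_{r_2}M\in\tilde I$ together imply $M\in\tilde I$. Assume for contradiction $\alpha_1>\beta_1$ (which combined with $\alpha\le\beta$ forces $\alpha_2<\beta_2$, so $\alpha_1\ge 1$ and $\beta_2\ge 1$), and construct the monomial $M$ whose $i$-component is
\[
x_{r_1}^{\alpha_1-1}\,x_{r_2}^{\beta_2-1}\,N_b,
\]
with $j$-components ($j\ne i$) chosen as $\mathrm{lcm}(\tilde m_j(\bfa),\tilde m_j(\bfb))$. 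A direct check using $\alpha_2\le\beta_2-1$, $\beta_1\le\alpha_1-1$, and $N_a\mid N_b$ shows $\tilde m(\bfa)\mid x_{r_1}M$ and $\tilde m(\bfb)\mid x_{r_2}M$, so both products lie in $\tilde I$. The NZD axiom then forces $M\in\tilde I$, so some minimal generator $\tilde m(\bfc)$ divides $M$. Direct inspection of the $i$-component of $M$ rules out $\bfc=\bfa$ (since $\alpha_1\not\le\alpha_1-1$) and $\bfc=\bfb$ (since $\beta_2\not\le\beta_2-1$); the remaining task is to exclude all other candidates $\bfc$.

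The main obstacle is precisely this last exclusion. For each other minimal generator $x^\bfc$ of $I$, one has to show that its $i$-component $\tilde m_i(\bfc) = x_{r_1}^{\gamma_1}x_{r_2}^{\gamma_2}N_c$ cannot satisfy simultaneously $\gamma_1\le\alpha_1-1$, $\gamma_2\le\beta_2-1$, $N_c\mid N_b$, and $\tilde m_j(\bfc)\mid\mathrm{lcm}(\tilde m_j(\bfa),\tilde m_j(\bfb))$ for all $j\ne i$. This is done by invoking the IH on comparison pairs such as $(\bfc,\bfa)$ at level $\tilde I^{\mathrm{prev}}$ to pin down the shape of $\tilde m^{\mathrm{prev}}_i(\bfc) = x_r^{\gamma_1+\gamma_2}N_c$, combined with minimality of $\bfa$ and $\bfb$ as generators of $I$ to force $\bfc\in\{\bfa,\bfb\}$, contradicting what was just ruled out.
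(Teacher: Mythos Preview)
Your inductive setup is reasonable, and the construction of $M$ together with the non-zero-divisor argument correctly forces some minimal generator $\tilde m(\bfc)$ to divide $M$. The gap is in the final exclusion step, which you only sketch. For $j\ne i$ you took the $j$-component of $M$ to be $\mathrm{lcm}(\tilde m_j(\bfa),\tilde m_j(\bfb))$, and nothing in your inductive hypothesis controls the degree of this lcm: your IH is a statement about the $i$-components only, and the obvious attempt to apply it at index $j$ to the pair $(\bfb,\bfa)$ fails because the side condition ``$b_k\ge a_k$ for all $k\ne j$'' is violated at every $k\notin\{i,j\}$. Consequently the $j$-degree of $M$ may strictly exceed $a_j=\max(a_j,b_j)$, so a divisor $\tilde m(\bfc)$ of $M$ can perfectly well have $c_j>a_j$. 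Minimality of $x^{\bfa}$ (or of $x^{\bfb}$) then gives no leverage, and the claimed reduction to $\bfc\in\{\bfa,\bfb\}$ does not follow from the ingredients you list. The phrase ``invoking the IH on comparison pairs such as $(\bfc,\bfa)$'' is not meaningful here either, since you have no coordinate-wise inequalities between $\bfc$ and $\bfa$ of the required shape.

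The paper's proof avoids exactly this problem by a different induction (on $d=b_i-a_i$) and, crucially, by passing to an intermediate separation in which all $j$-variables for $j\ne i$ are already collapsed to the single variable $x_j$. The test monomial it constructs then has $j$-degree \emph{exactly} $a_j$ for $j\ne i$, so any minimal generator $x^{\bfc}$ dividing it automatically satisfies $c_j\le a_j$; minimality of $x^{\bfa}$ immediately forces $c_i>a_i$, and the induction on $d$ closes the argument. If you want to salvage your route, you would need either to strengthen the inductive hypothesis to simultaneously control all components (not just the $i$-th), or to modify $M$ so that its $j$-degree is exactly $a_j$ --- but the latter requires $\tilde m_j(\bfb)\mid\tilde m_j(\bfa)$, which is precisely the kind of statement you are trying to prove.
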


\begin{proof} We shall use induction on $d = b_i - a_i$. If $d = 0$ then
  clearly $\bfb = \bfa$ and there is nothing to prove. We may also assume that
  $a_i \geq 1$, since otherwise there is nothing to prove. We suppose
  $m_i(\bfa)$ does not divide $m_i(\bfb)$ and we shall derive a contradiction.
  If it does not divide we may factor $m_i(\bfb)$ as $t_i(\bfb) \cdot n_i(\bfb)$ where 
  $t_i(\bfb)$ has degree $d + 1$ and has no common variable with
  $m_i(\bfa)$. (We are of course using here that $m_i(\bfb)$ is squarefree.)
  For simplicity we may re-index variables so that $t_i(\bfb) =
  x_{i1}x_{i2} \cdots x_{i,d+1}$. We now in $k[\Xv_1, \ldots, \Xv_m]/J$ divide out by
  all the variable differences involving $\Xv_j$-variables where $j \neq i$, and
  by all variable differences $x_{ir}- x_{i,r+1}$ for $r = d+2, \ldots, n_i-1$.
  Thus we are collapsing all the $\Xv_j$-variables into the single variable $x_j$
  and the variables $x_{i,d+2}, \ldots, x_{i,n_i}$ into a single variable $x_i$.
  We get a quotient ring
  \begin{equation} \label{eq:sepQuotring}
    k[x_{i1}, \ldots, x_{i,d+1}, x_1, \ldots, x_m]/I^\prime,
    \end{equation}
    where $I^\prime$ is a separation of $I$. Note that $m(\bfa)$ collapses
    to $x^\bfa$ in $I^\prime$. 

Consider now the variable difference $x_{i,d+1} - x_i$ in the polynomial ring
above. We see that 
\begin{align} \notag  & (x_{i,d+1} - x_i) x_{i1} \cdots x_{id} \cdot x_i^{a_i-1}  \prod_{j \neq i}
  x_j^{a_j} \\ \label{eq:sep-xx}
=\, & x_{i1} \cdots x_{i,d+1} \cdot  x_i^{a_i-1}  \prod_{j \neq i} x_j^{a_j}- 
x_{i1} \cdots x_{i,d} \cdot  x_i^{a_i}  \prod_{j \neq i} x_j^{a_j}
\end{align}
vanishes in the quotient ring \eqref{eq:sepQuotring}: Tthe first term
is divisible by the image of $m(\bfb)$ in $I^\prime$ (note that $b_i = (d+1) + (a_i - 1)$), and the second term is
divisible by the image of $m(\bfa)$. Since $x_{i,d+1} - x_i$ is
not a zero divisor (it belongs to a regular sequence), we get from \eqref{eq:sep-xx} that 
\begin{equation} \label{eq:sepn} \bfn = x_{i1} \cdots x_{id}  \cdot x_i^{a_i-1}  \prod_{j \neq i}x_j^{a_j}
\end{equation} 
is in $I^\prime$.
Now if $d = 1$, this monomial has $\ZZ^m$-degree $\bfa$. But
the monomial $x^\bfa$ is in $I^\prime$, with the same degree. Since these
are the $\ZZ^m$-degree of a generator of $I$, there can only be a single monomial in $I^\prime$ with
this $\ZZ^m$-degree. We get a contradiction. Now suppose $d \geq 2$.
Then $\bfn$ is divisible by a generator $m^\prime(\bfc)$ in $I^\prime$ which
can {\it not} be $x^\bfa$. We will have each $c_j \leq a_j$ for $j \neq i$,
and so $c_i > a_i$. Furthermore, we have $b_i > c_i$ since $\bfn$ in \eqref{eq:sepn} has
$i$-degree $d + a_i -1 = b_i - 1$. By induction on $d$, considering the polarized ideal 
$J$, the $i$'th part $m_i(\bfa)$ here divides
the $i$'th part $m_i(\bfc)$. But then going to $I^\prime$ then $x_i^{a_i}$ divides
the image of $m^\prime_i(\bfc)$, and so  $x_i^{a_i}$ would divide $\bfn$ of \eqref{eq:sepn}, a contradiction.
\end{proof}

\begin{remark} \label{rem:sep-ni}
  If $m(\bfa)$ is a minimal generator of $J$, by the lemma
  $m_i(\bfa)$ will divide $m_i(0, \ldots, n_i, \ldots, 0)$ which of course
  is just $m(0, \ldots, n_i, \ldots, 0)$. Thus, if the polarization of
  $x_i^{n_i}$ is $x_{i1}x_{i2} \cdots x_{in_i}$, then every $x_i$-variable occurring
  in the minimal generators of $J$ are among these variables, and so
  we may take $\Xv_i = \{ x_{i1}, \ldots, x_{in_i} \}$.
\end{remark}

The following is quite particular for Artinian monomial ideals, note Example \ref{ex:seppol}.
\begin{corollary} \label{cor:sep-artin}
  Every polarization of an Artinian monomial ideal $I$ is a separated model
  for $I$.
  \end{corollary}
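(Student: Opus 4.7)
The plan is to argue by contradiction. Suppose $J$ admits a simple separation $K \sus k[x_{R^{\prime\prime}}]$ via a surjection $R^{\prime\prime} \pil R^\prime$ that splits a single variable $x_r$ of $R^\prime$ into two variables $x_{r_1}, x_{r_2}$ of $R^{\prime\prime}$. Let $i$ denote the color of $x_r$, i.e.\ the index such that the composed surjection $R^{\prime\prime} \pil R^\prime \pil R = \{1,\ldots,m\}$ sends $r$ to $i$. The strategy is to show $K$ must itself be a polarization of $I$, and then to derive a numerical contradiction with Remark \ref{rem:sep-ni}.

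First I would verify that $K$ is squarefree. The minimal generators of $K$ and $J$ are in bijection (same number of generators, since Betti numbers are preserved under separation), and the bijection is realized by the variable-identification map $k[x_{R^{\prime\prime}}] \pil k[x_{R^\prime}]$ (identity outside $\{x_{r_1}, x_{r_2}\}$, sending both to $x_r$): the image of a minimal generator of $K$ is a monomial in $J$ of the prescribed $\ZZ^{R^\prime}$-multidegree, and comparing multidegrees forces the images to be precisely the minimal generators of $J$. Writing the minimal generator $g$ of $K$ that corresponds to a minimal generator $m(\bfa)$ of $J$ as $g = x_{r_1}^a x_{r_2}^b \nu$, where $\nu$ involves neither $x_{r_1}$ nor $x_{r_2}$, the image $x_r^{a+b} \nu = m(\bfa)$ together with the squarefreeness of $m(\bfa)$ forces $a + b \leq 1$ and $\nu$ squarefree. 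Hence $g$ is squarefree, and so $K$ is a polarization of $I$.

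Next I would count color-$i$ variables. Remark \ref{rem:sep-ni} applied to $J$ says that exactly $n_i$ color-$i$ variables of $R^\prime$ occur in minimal generators of $J$, namely the $n_i$ variables making up $m(0,\ldots,n_i,\ldots,0)$. Of these, $n_i - 1$ are distinct from $x_r$; under the natural bijection $R^{\prime\prime} \setminus \{x_{r_1}, x_{r_2}\} \leftrightarrow R^\prime \setminus \{x_r\}$, each corresponds to a color-$i$ variable of $R^{\prime\prime}$ that still occurs in the corresponding minimal generator of $K$ (the part of that generator not involving $x_{r_1}$ or $x_{r_2}$ is unchanged). In addition, both $x_{r_1}$ and $x_{r_2}$ have color $i$ and each appears in some minimal generator of $K$ by condition (ii) of a simple separation. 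Hence at least $n_i + 1$ distinct color-$i$ variables of $R^{\prime\prime}$ appear in minimal generators of $K$, contradicting the upper bound of $n_i$ given by Remark \ref{rem:sep-ni} applied to the polarization $K$.

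The main subtlety is establishing squarefreeness of $K$, specifically that the bijection between minimal generators of $J$ and $K$ is realized by the variable-identification map; once this is available, the two applications of Remark \ref{rem:sep-ni} (to $J$ and to $K$) make the variable count mechanical.
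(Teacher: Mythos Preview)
Your proof is correct and follows essentially the same approach as the paper: argue by contradiction, verify that the further simple separation is itself squarefree (hence a polarization of $I$), and invoke Remark~\ref{rem:sep-ni} to reach a contradiction. The only cosmetic difference is in the final step---the paper observes that collapsing $x_{r_1}, x_{r_2}$ back to $x_r$ sends the generator of $K$ corresponding to $x_i^{n_i}$ to a non-squarefree monomial in $J$, whereas you count color-$i$ variables directly---but these express the same numerical obstruction.
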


  \begin{proof}
    If the polarization $J$ was not a separated model, then let $J^\prime$ be
    a further simple separation. Since $I$ in $k[x_1, \ldots, x_m]$
    is an Artinian monomial ideal, every variable $x_i$ of course occurs
    in a minimal generator of $I$, in fact $x_i^{n_i}$ is a minimal generator.
    Then if $J^\prime$ is in $k[\Xv_1^\prime, \ldots, \Xv_m^\prime]$ then every
    variable in this polynomial ring must also occur in a generator of
    $J^\prime$, by the definition of a separation.
    By the above Lemma \ref{lem:sep-artin} and Remark \ref{rem:sep-ni}, if $x_i^{n_i}$ polarizes to
    $x_{i1} \cdots  x_{in_i}$ then $\Xv_i^\prime = \{x_{i1}, \ldots, x_{in_i} \}$.
    But $J$ is obtained from $J^\prime$ by dividing out by a variable
    difference $x_{ia} - x_{ib}$. Then the image of $x_{i1} \cdots x_{in_i}$
    in $J$ would not be squarefree, a contradiction.
    \end{proof}

\subsection{Polarizations of powers of the graded maximal ideal}
We now consider powers of the maximal ideals
\[ M= (x_1,x_2, \ldots, x_m)^{n} \sus k[x_1, \ldots, x_m].\]
A polarization of this ideal may by Remark \ref{rem:sep-ni} be taken
to live in polynomial ring
$k[\Xv_1, \ldots, \Xv_m]$ where $\Xv_i = \{ x_{i1}, \ldots, x_{in} \}$. 
Our goal is to combinatorially classify all possible polarizations
of $M$ in this polynomial ring.

The generators of the monomial ideal $M$ are  all monomials $x_1^{b_1} \ldots x_m^{b_m}$ with $b_1 + \cdots + b_m = n$.

\begin{definition} $\Delta_m(n)$ is the subset of $\NN_0^m$ of all 
  tuples $\bfb = (b_1, \ldots, b_m)$ of non-negative integers with
  $b_1 + \cdots + b_m = n$. For a given $\bfb$, its {\it support}
  $\supp \, (\bfb)$ is the set of all $i$ such that $b_i \geq 1$. 
\end{definition}

In a polarization $J$ of $M$ we have one minimal generator of $J$, $m(\bfb)$
for every $\bfb$ in $\Delta_m(n)$.
Now fix an index $1 \leq i \leq m$. Then $\Delta_m(n)$ may be given a partial order $\geq_i$ 
by letting $\bfb \geq_i \bfa $ if $b_i \geq a_i$ and $b_j
\leq a_j$ for $j \neq i$. Thus there is one maximal element
$(0,\ldots, 0, n, 0, \ldots, 0)$ where $n$ is in position $i$, and
it has minimal elements all $\bfb $ with $b_i = 0$.
This is a graded partial order with $\bfb$ of rank $b_i$. 

Now, given any $\bfb \in \Delta_m(n)$, we get from the polarization $J$ a squarefree monomial 
$m_i(\bfb)$, see \eqref{eq:sepma}.
The set of variables dividing this monomial is a subset of $\Xv_i$ which we denote as $X_i(\bfb)$. Let $B(\Xv_i)$ be the Boolean
poset on $\Xv_i$, a Boolean poset on a set of $n$ elements.
We get a function
\begin{align}  \label{eq:sep-iso} X_i : \Delta_m(n) & \pil B(\Xv_i) \\
 \notag \bfb & \mapsto X_i(\bfb).
\end{align}

The following is an immediate consequence of Lemma \ref{lem:sep-artin}.

\begin{corollary} \label{pro:sepIsotone}
  Let $J$ be a polarization of $M$. Then $X_i$ is an isotone
  rank-preserving map when $\Delta_m(n)$ has the ordering $\geq_i$.
\end{corollary}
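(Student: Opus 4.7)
The plan is to read the statement as a direct translation of Lemma \ref{lem:sep-artin} into the language of the maps $X_i$, with the rank-preservation coming essentially for free from the definition of polarization.

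First I would dispense with the rank-preserving claim. By definition of the partial order $\geq_i$, the rank of $\bfb \in \Delta_m(n)$ equals $b_i$. On the other side, $B(\Xv_i)$ is graded by cardinality. Since $J$ is a polarization (hence squarefree) and the map $\pi \colon k[\Xv_1,\ldots,\Xv_m]\pil k[x_1,\ldots,x_m]$ sends $m_i(\bfb)$ to $x_i^{b_i}$, the monomial $m_i(\bfb)$ is squarefree of degree exactly $b_i$ in the variables $\Xv_i$. Thus $|X_i(\bfb)|=b_i$, so $X_i$ carries rank $b_i$ to rank $b_i$.

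For the isotone property, suppose $\bfb \geq_i \bfa$ in $\Delta_m(n)$. Unpacking the definition of $\geq_i$, this means $a_i \leq b_i$ and $a_j \geq b_j$ for every $j \neq i$. These are precisely the hypotheses of Lemma \ref{lem:sep-artin} applied to the minimal generators $x^\bfa$ and $x^\bfb$ of $M$, so that lemma yields $m_i(\bfa) \mid m_i(\bfb)$. Translating divisibility of squarefree monomials back into containment of their variable sets gives $X_i(\bfa) \subseteq X_i(\bfb)$, which is the desired isotonicity.

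The only thing to be careful about is that Lemma \ref{lem:sep-artin} is stated for a polarization of a general Artinian monomial ideal, and $M=(x_1,\ldots,x_m)^n$ is indeed Artinian (since each $x_i^n$ is a minimal generator), so the lemma applies without modification. There is no real obstacle here; the substantive content was already absorbed into the proof of Lemma \ref{lem:sep-artin}, and this corollary is simply its reformulation as the statement that each $X_i$ is a morphism of graded posets from $(\Delta_m(n),\geq_i)$ to $B(\Xv_i)$.
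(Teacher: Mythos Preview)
Your proof is correct and takes exactly the same approach as the paper, which simply records the result as an immediate consequence of Lemma~\ref{lem:sep-artin}. You have faithfully unpacked that one-line justification: the rank-preservation is automatic from the degree count, and the isotonicity is precisely the divisibility conclusion of the lemma rephrased as containment of variable sets.
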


\begin{remark} \label{rem:sep-group}
Since $\Xv_i = \{ x_{i1}, \ldots, x_{in}\}$, the group $S_n$ acts on $\Xv_i$.
Also the group $S_m$ acts on $k[x_1, \ldots, x_m]$ by permutation of variables and
hence on the set of maps $\{X_i\}$. In all there is an action of a semi-direct 
product $S_m \ltimes (S_n)^m$ on $k[\Xv_1, \ldots, \Xv_m]$ compatible with the
action of $S_m$ on $k[x_1, \ldots, x_m]$. Since $(x_1, \ldots, x_m)^n$ is equivariant
for the group action, the isomorphism classes of polarizations of this maximal ideal power are precisely the
orbits of $S_m \ltimes (S_n)^m$ on the set of polarizations. 
\end{remark}

The convex hull of  $\Delta_m(n)$ in $\RR^{m}$ is a simplex of dimension
$(m-1)$. We shall however only need the graph structure it induces.
Given a point $\bfc$ in $\Delta_m(n+1)$ and $i,j$ in the support of $\bfc$,
let $e_i$ and $e_j$ be the unit coordinate vectors. 
Then we get an edge between the points $\bfc - e_i$ and $\bfc - e_j$
in $\Delta_m(n)$, denoted $(\bfc;i,j)$.
Every edge in $\Delta_m(n)$ is of this form for
unique $\bfc, i$ and $j$. A point $\bfc$ of $\Delta_m(n+1)$  induces
a subgraph of $\Delta_m(n)$, the complete {\it down-graph} $D(\bfc)$
on the points $\bfc - e_i$ for $i \in \Supp \, (\bfc)$. 
The {\it dimension} of $D(\bfc)$ is the dimension of its convex hull,
which is one less than the cardinality of $\Supp \, (\bfc)$.
When
$\Supp \,(\bfc)$ has dimension two we call this a {\it down-triangle}. 
In Figure \ref{fig:sep-D33} we have three down-triangles.

\medskip
Let $\Delta_m^+(n+1)$ be the subset of $\Delta_m(n+1)$ consisting of
$\bfc$ with $c_i \geq 1$ for every $i$. The complete down-graph $D(\bfc)$
has  full (maximal) dimension $(m-1)$ iff $\bfc \in \Delta_m^+(n+1)$. Note
that $\Delta_m^+(n+1)$ is in one-one correspondence with $\Delta_m(n+1-m)$
by sending $\bfc$ to $\bfc - \ben$ where $\ben = (1,1, \ldots, 1)$. 
In Figure \ref{fig:sep-D33} there are three full-dimensional complete down-graphs,
or in this case down-triangles,
corresponding to the three elements of $\Delta_3(1)$, or equivalently 
of $\Delta_3^+(4)$. 

\begin{figure}
\begin{tikzpicture}
\draw (-3,0)--(3,0);
\draw (-2,1.6)--(2,1.6);
\draw (-1,3.2)--(1,3.2);
\draw (-3,0)--(0,4.8);
\draw (3,0)--(0,4.8) ;
\draw (-1,0)--(1,3.2);
\draw (1,0)--(-1,3.2);
\draw (-1,0)--(-2,1.6);
\draw (1,0)--(2,1.6);
\filldraw[black] (0,4.8) circle (2pt)  node[anchor=south] at (0,4.9){(3,0,0)};
\filldraw[black] (-3,0) circle (2pt)  node[anchor=east] at (-3.1,0){(0,3,0)};
\filldraw[black] (3,0) circle (2pt)  node[anchor=west] at (3.1,0){(0,0,3)};
\end{tikzpicture}
\caption{}
\label{fig:sep-D33}
\end{figure}


Each $\bfa$ in $\Delta_m(n-1)$ also determines a subgraph of $\Delta_m(n)$, 
the complete {\it up-graph} $U(\bfa)$ consisting of the points
$\bfa + e_i$ for $i = 1, \ldots, m$ and with edges
$(\bfa + e_i + e_j;i,j)$ for $i \neq j$. For each $\bfa$ in $\Delta_m(n-1)$
the convex hull of the up-graph $U(\bfa)$ has full dimension $(m-1)$.
When $m = 3$ we call this an {\it up-triangle}. 
In Figure \ref{fig:sep-D33} there are six up-triangles.

\section{Conjectures and problems} \label{sec:conj}

Before embarking on the main results of the paper we here
discuss conjectures and problems on polarizations of Artinian monomial ideals in general that have come up during our investigations. 
They concern i) the topology of their associated simplicial complexes, ii) their Alexander duals, a class of rainbow
monomial ideals (terminology introduced here), iii) their deformations.

\subsection{Balls and spheres} \label{subsec:conj-ball}
\begin{lemma} \label{lem:conj-codimone}
Let $\Delta(J)$ be the simplicial complex associated to the polarization $J$
of an Artinian monomial ideal $I$. Then every codimension one face of $\Delta(J)$
is contained in one or two facets. If $I$ is not a complete intersection, then at
least once there is a codimension one face contained in exactly one facet.
\end{lemma}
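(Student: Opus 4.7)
My plan is to reformulate everything in terms of vertex covers of the hypergraph whose edges are the supports of the generators of $J$. Since $J$ is a polarization of the Artinian ideal $I$, the two quotient rings share the same graded Betti numbers, so $k[\Xv_1, \ldots, \Xv_m]/J$ is Cohen--Macaulay, $\Delta(J)$ is pure, and every face extends to a facet. Facets of $\Delta(J)$ correspond bijectively to minimal vertex covers of the generators of $J$, and the presence of each generator $x_{i1}\cdots x_{in_i}$ of $J$ (the polarization of $x_i^{n_i}$, cf.\ Remark \ref{rem:sep-ni}) forces every cover to meet each color class $\Xv_i$. Since minimal covers have size $m$ (the codimension), they are ``rainbow'' sets containing exactly one vertex from each $\Xv_i$.

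For the first assertion I would take a codimension-one face $F$ and examine $C := V \setminus F$, which has size $m+1$ and is still a vertex cover. Meeting each $\Xv_i$ forces exactly one color $i$ to contribute two vertices $y_i,y_i'$, the others contributing one. The facets containing $F$ biject with vertices $v \in C$ for which $C \setminus \{v\}$ remains a vertex cover; removing a vertex from a singleton color class leaves $x_j^{n_j}$ uncovered, so $v \in \{y_i, y_i'\}$. This yields at most two extensions, and purity furnishes at least one.

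For the second assertion, the strategy is to build an explicit codimension-one face with a unique extension, starting from a minimal generator $x^{\bfa}$ of $I$ whose support has at least two elements (guaranteed because $I$ is not a complete intersection). For every $j$ with $a_j \geq 1$ minimality forces $a_j < n_j$. I would fix such an index $i$ and a vertex $y_i \in m_i(\bfa)$, and consider $S := \mathrm{supp}\, m(\bfa) \setminus \{y_i\}$. The key observation is that $S$ is a face of $\Delta(J)$: any generator $m(\bfb)$ whose support lies in $S$ would divide the squarefree generator $m(\bfa)$ and hence equal it, contradicting $y_i \notin S$. By purity, $S$ extends to a facet $F$, whose rainbow complement $P := V \setminus F$ must have $\Xv_i$-entry equal to $y_i$ (otherwise $\mathrm{supp}\, m(\bfa) \subseteq F$, impossible), and $\Xv_j$-entry outside $m_j(\bfa)$ for every $j \neq i$ (because $m_j(\bfa) \subseteq S \subseteq F$).

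To finish, I would pick $y_i' \in \Xv_i \setminus m_i(\bfa)$, nonempty since $a_i < n_i$, and set $F^\ast := F \setminus \{y_i'\}$, a codimension-one face. By the first part, its extensions correspond to removing $y_i$ or $y_i'$ from $V \setminus F^\ast = P \cup \{y_i'\}$. Removing $y_i'$ restores $P$ and gives the facet $F$; removing $y_i$ yields a set disjoint from $\mathrm{supp}\, m(\bfa)$ by construction, so it fails to cover $m(\bfa)$. Hence $F^\ast$ lies in exactly one facet. The only delicate step I foresee is producing $F$ with the right coloring constraints, which is handled cleanly by the minimality/squarefree-ness of the generators combined with purity; everything else is bookkeeping about rainbow covers.
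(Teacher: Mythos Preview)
Your argument is correct and takes a genuinely different route from the paper. The paper observes that $\Delta(J)$ sits inside the join $S=\ast_{i=1}^m \partial\Delta_i$ (the Stanley--Reisner complex of the complete intersection $(x_{11}\cdots x_{1n_1},\ldots,x_{m1}\cdots x_{mn_m})$), which is a simplicial sphere; the first assertion then follows because every codimension-one face of $S$ already lies on exactly two facets of $S$, and the second follows from strong connectedness of $S$: a facet path from $\Delta(J)$ to its complement in $S$ must cross the boundary. Your proof instead works entirely inside $\Delta(J)$ via the rainbow description of minimal vertex covers, and for the second assertion you explicitly construct a codimension-one face with a unique extension starting from a mixed minimal generator of $I$. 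The paper's approach is shorter and more geometric, exploiting the ambient sphere; yours is more self-contained (no need to invoke joins of sphere boundaries or strong connectedness) and yields a concrete witness to the boundary face rather than an existence argument. Both rely on the same underlying facts: Cohen--Macaulayness gives purity, and the generators $x_{i1}\cdots x_{in_i}$ force every facet complement to be a rainbow $m$-set.
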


\begin{proof} Let $\Delta_i$ be the simplex on $\{(i,j) \, | \, j = 1, \ldots, n_i\}$.
The squarefree monomial $x_{i1}x_{i2}\cdots x_{in_i}$ in $k[\Xv_i]$ defines
the sphere which is the boundary $\vardel \Delta_i$ of this simplex. 
The natural polarization in $k[\Xv_1, \ldots, \Xv_m]$ of the complete intersection
$(x_1^{n_1}, x_2^{n_2}, \ldots, x_m^{n_m})$ then defines the sphere of codimension $m$
which is the join $S = \underset{i=1}{\overset{m}*} \vardel \Delta_i$. 
Every codimension one face is here on precisely two facets.

The simplicial complex $\Delta(J)$ is a Cohen-Macaulay subcomplex of $S$ with the 
same dimension as $S$. If $\Delta(J)$ is not all of $S$, let $F$ be a facet of $\Delta(J)$
and $G$ a facet of $S$ not in $\Delta(J)$. Since $S$ is strongly connected,
 there is a path of facets
\[ F = F_0, F_1, \ldots, F_r = G \]
such that $F_{i} \cap F_{i+1}$ has codimension one for each $i$, \cite[Prop.9.1.12]{HH11}.
Let $p$ be maximal such that $F_p$ is in $\Delta(J)$. Then
$F_p \cap F_{p+1}$ is only on the facet $F_p$ in $\Delta(J)$. 
\end{proof}

By a result of Bj\"orner \cite[Thm.11.4]{Bj95} a constructible simplicial complex with the property
of Lemma \ref{lem:conj-codimone} above
is a simplicial ball or a simplicial sphere (the latter when every codimension
one face is on exactly two facets). 

\begin{conjecture} \label{con:conj-ball} The simplicial complex $\Delta(J)$ associated to a polarization
$J$ of an Artinian monomial ideal $I$, is a simplicial ball, save for the case
when $I$ is a complete intersection, when it is a simplicial sphere.
\end{conjecture}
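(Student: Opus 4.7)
The strategy I would pursue is to reduce the conjecture to shellability of $\Delta(J)$. By Lemma \ref{lem:conj-codimone} together with the cited theorem of Bj\"orner, it suffices to show that $\Delta(J)$ is constructible, and shellability is the natural candidate. Via the Herzog--Takayama correspondence, this amounts to proving that the Alexander dual $J^\vee$ has linear quotients. Since, as the introduction indicates, Alexander duals of polarizations of Artinian monomial ideals are precisely the rainbow monomial ideals with $m$-linear resolution, the conjecture reduces to exhibiting a linear-quotients order on the generators of any such rainbow ideal coming from a polarization. The complete intersection case is easy: the standard polarization of $(x_1^{n_1},\ldots,x_m^{n_m})$ is itself a complete intersection, whose complex is the join $S$ appearing in the proof of Lemma \ref{lem:conj-codimone}, and one can show using Corollary \ref{cor:sep-artin} and Lemma \ref{lem:sep-artin} that this is the only polarization.

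My preferred route for the main case is induction using simple separations. The standard polarization has been handled by S.~Murai, and by Corollary \ref{cor:sep-artin} every polarization is a separated model, connected to the original Artinian ideal by a sequence of simple separations. I would attempt to show that shellability of the associated complex is stable under simple deseparation, i.e.\ under identification of two variables whose difference is a non-zero-divisor. Concretely, if $J$ is a simple separation of $J^\prime$ and one has a shelling $F_1,\ldots,F_t$ of $\Delta(J)$, then collapsing along the regular variable difference should either descend to a shelling of $\Delta(J^\prime)$ or be repairable by a local modification at each identification step; the non-zero-divisor hypothesis is precisely what one expects to power such a local repair.

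An alternative attack would be to build a linear-quotients order on $J^\vee$ directly, using combinatorial data analogous to the isotone maps $X_i : \Delta_m(n) \to B(\Xv_i)$ introduced in Section \ref{sec:sep}, extended to arbitrary Artinian $I$. The plan would be to choose a total order refining a componentwise product of partial orders tailored to each variable class, and to verify that at each step the colon ideal against earlier generators is generated by variables; isotonicity of the extended $X_i$ should be what keeps these colon ideals monomial and linear. The rainbow/color structure of $J^\vee$ gives a natural stratification to induct along, one color class at a time.

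The principal obstacle I anticipate is bridging the gap between having an $m$-linear resolution (which is the best one can extract from Cohen--Macaulayness of $\Delta(J)$ via Eagon--Reiner) and having linear quotients. Rainbow ideals with linear resolution need not have linear quotients in general, so whichever route is chosen must genuinely exploit the fact that $J^\vee$ comes from a polarization of an Artinian ideal, not from an arbitrary rainbow ideal with $m$-linear resolution. Proving that this restrictive origin forces the extra combinatorial rigidity of a linear-quotients order, rather than merely a linear resolution, is where I expect the real difficulty to lie.
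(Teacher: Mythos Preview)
The statement is a \emph{conjecture}; the paper does not prove it in general. What the paper does establish are two special cases---all polarizations of $(x_1,x_2,x_3)^n$ (Section~\ref{sec:LQ}) and all polarizations of $(x_1,\ldots,x_m)^2$ (Section~\ref{sec:degto})---and in both cases it uses exactly the mechanism you identify first: one shows that the Alexander dual $J^\vee$ has linear quotients, hence $\Delta(J)$ is shellable, hence a ball by Bj\"orner's theorem together with Lemma~\ref{lem:conj-codimone}. So your reduction to linear quotients of $J^\vee$ matches the paper's approach precisely, and your final paragraph correctly locates the obstruction to pushing this through in general.

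Your ``preferred route'' via separations, however, has a structural gap. Different polarizations of the same Artinian ideal $I$ are not related to one another by separations or deseparations: by Corollary~\ref{cor:sep-artin} every polarization is already a separated model, so none admits a further separation, and in particular the standard polarization cannot be separated to reach an arbitrary polarization $J$. The only chain of simple separations guaranteed runs from $I$ up to $J$, but the intermediate ideals are non-squarefree and have no Stanley--Reisner complex whose shellability one could track. There is therefore no inductive path along which to transport Murai's result for the standard polarization to an arbitrary $J$. (Your remark on the complete intersection case is correct: the polarization of $(x_1^{n_1},\ldots,x_m^{n_m})$ is unique up to relabeling, essentially by Remark~\ref{rem:sep-ni}.)
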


By the result of Bj\"orner loc.cit. a positive answer to the following question would settle the above conjecture.
\begin{question}
Do polarizations of Artinian monomial ideals have constructible (for instance shellable) simplicial complexes?
\end{question}

That the standard polarization of an Artinian monomial ideal is shellable
seems first to have been shown by A.Soleyman Jahan in \cite{Sol07}. In \cite{Mur11} S.Murai uses
this to conclude that the standard polarizations give simplicial balls. 
More generally it is shown that letterplace ideals 
define simplicial balls, \cite{AFN19}, by showing that these simplicial
complexes are shellable. 
Letterplace ideals are introduced in \cite{FGH17} and are
polarizations of Artinian monomial ideals. The article \cite{Flo19} discusses such Artinian monomial ideals more in depth.

In our last Section \ref{sec:LQ} we show in the case of {\it three} variables that the Alexander dual
of any polarization $J$ has linear quotients, see \cite[Sec.8.2, Cor.8.2.4]{HH11} for this notion. In Section \ref{sec:degto} we show when the power of the maximal
ideal (in any number of variables) is two, then the Alexander dual has linear quotients.
Thus in these cases the simplicial complex $\Delta(J)$ is shellable and hence a simplicial ball. 

\medskip
For a letterplace ideal the second author et.al. in \cite{AFN19} get an 
explicit simple description of the Stanley-Reisner ideal of the boundary
of the simplicial ball defined by the letterplace ideal. 
In fact a general result, see \cite[Section 5]{BH93}, says that the 
canonical module of a Stanley-Reisner ring $k[\Delta]$ identifies 
as a multigraded proper ideal of this ring $k[\Delta]$ if and only if $\Delta$ is a homology ball.
Then the ideal defines the boundary of this homology ball, which is a homology sphere.
In \cite{AFN19} an explicit description of this canonical module is given. 

\begin{conjecture}
For polarizations of Artinian monomial ideals, the  canonical module identifies
(in a simply described and natural way) as  
a multigraded ideal of the Stanley-Reisner ring of the polarization.
\end{conjecture}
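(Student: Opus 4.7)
The plan is to proceed in two stages, contingent on Conjecture \ref{con:conj-ball}. First, assuming that $\Delta(J)$ is a homology ball, the general result of \cite[Section 5]{BH93} recalled in the excerpt identifies the canonical module $\omega_{k[\Delta(J)]}$, up to degree shift, with a multigraded ideal $\omega_J \sus k[\Delta(J)]$ whose quotient gives the Stanley--Reisner ring of the boundary sphere $\vardel \Delta(J)$. The substantive task is then to describe $\omega_J$ explicitly and naturally in terms of the polarization data.

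The natural candidate is the \emph{interior face ideal} generated by the squarefree monomials $x^F$ with $F$ a face of $\Delta(J)$ not contained in $\vardel \Delta(J)$. That this ideal realizes $\omega_{k[\Delta(J)]}$ is a standard consequence of Hochster's formula for local cohomology once the ball property is established. The content of the conjecture is thus to characterize the facets of $\vardel \Delta(J)$ directly in terms of $J$ or, equivalently, of its Alexander dual $J^\vee$. By Proposition \ref{pro:conjRainbow}, $J^\vee$ is generated by rainbow monomials, and complementation within the ambient variable set gives a natural bijection between rainbow monomials of degree $m$ and certain codimension-$m$ faces of the ambient simplex, giving a combinatorial handle on the boundary.

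For the case $I = (x_1,\dotsc,x_m)^n$ which is the focus of the paper, I would conjecture that the interior facets of $\Delta(J)$ correspond precisely to those rainbow monomials that do \emph{not} generate $J^\vee$, so that $\omega_J$ admits a description purely in terms of the isotone maps $X_i$ of Corollary \ref{pro:sepIsotone}. Such a description would specialize, in the letterplace case, to the formula of \cite{AFN19}. The proof would then proceed by comparing multigraded Hilbert series on the two sides, using the combinatorial description of $J^\vee$ developed in Section \ref{sec:AD} and the $S_m \ltimes (S_n)^m$-equivariance noted in Remark \ref{rem:sep-group} to reduce the number of cases.

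The principal obstacle is twofold. First, one needs Conjecture \ref{con:conj-ball} with enough control to identify the boundary combinatorially; the results of Section \ref{sec:LQ} handle the three-variable and degree-two cases via linear quotients, but the general case is open. Second, extending a natural description from maximal ideal powers and letterplace ideals to \emph{arbitrary} Artinian monomial ideals requires a unifying combinatorial gadget that replaces the isotone maps $X_i$. A promising route is to show that every polarization of an Artinian monomial ideal can be obtained from the standard polarization by iterated deseparation (in the spirit of the framework alluded to in the Note of the introduction), and then to track how both the interior face ideal and the candidate combinatorial description transform compatibly at each deseparation step. If this is achieved, the final identification with $\omega_{k[\Delta(J)]}$ should reduce to a routine multigraded computation.
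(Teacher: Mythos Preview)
The statement you are attempting to prove is a \emph{conjecture} in the paper, not a theorem; the paper offers no proof and does not claim one. What the paper actually does is (i) state the conjecture, (ii) record the consequence that, together with Conjecture~\ref{con:conj-ball}, it would yield explicit Stanley--Reisner ideals for the boundary spheres, and (iii) point to \cite{AFN19} as the one case (letterplace ideals) where such a description is known. There is therefore nothing in the paper to compare your argument against.

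Your proposal is not a proof but a research outline, and you are candid about this: the argument is explicitly contingent on Conjecture~\ref{con:conj-ball}, on a conjectural identification of the boundary facets with the non-generators of $J^\vee$, and on an undeveloped ``deseparation'' framework for general Artinian ideals. None of these steps is carried out. In particular, the claim that interior facets of $\Delta(J)$ correspond to rainbow monomials not generating $J^\vee$ is itself a conjecture requiring proof; the Hilbert-series comparison you sketch would need the ball property as input, which is exactly what is open. So the gap is fundamental: you have proposed a plausible line of attack, consistent with what the paper hints at, but no step is actually established beyond what the paper already cites from \cite{BH93} and \cite{AFN19}.
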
 

\begin{consequence} With Conjecture \ref{con:conj-ball}, this would give an explicit description of the Stanley-Reisner ring of the boundary of the simplicial ball, defining a simplicial sphere.
\end{consequence} 

\subsection{Rainbow monomial ideals with linear resolution}
Recall that for a squarefree monomial ideal $J$ in a polynomial ring $S$, the squarefree monomial ideal $I$ is the \textit{Alexander dual} of $J$ if the monomials in $I$ are precisely the monomials in $S$ which have nontrivial common divisor with every monomial in $J$, or equivalently, every generator of $J$.

We may consider each set of variables $\Xv_i$, $i = 1, \ldots, m$ as a 
{\it color}
class of monomials. A monomial $x_{1i_1}x_{2i_2} \cdots x_{mi_m}$
with one variable of each color is a {\it rainbow} monomial.

\begin{proposition} \label{pro:conjRainbow} The class of ideals generated by rainbow monomials and with 
$m$-linear resolution is precisely the class which is Alexander dual to the
class of polarizations of Artinian monomial ideals in $m$ variables:
\begin{enumerate}
\item[a.] Let $J$ be a polarization of an Artinian
monomial ideal $I$ in $k[x_1, \ldots, x_m]$.
The Alexander dual ideal of $J$ is generated by rainbow monomials
and has $m$-linear resolution.
\item[b.] If an ideal $J^\prime$ is generated by rainbow monomials and has $m$-linear resolution
(and every variable in the ambient ring occurs in some generator of the ideal),
then its Alexander dual $J$ is a polarization of an Artinian monomial ideal
in $m$ variables.
\end{enumerate}
\end{proposition}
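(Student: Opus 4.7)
The plan rests on two classical tools: Eagon--Reiner (the Alexander dual $K^\vee$ of a squarefree monomial ideal $K$ has linear resolution iff $S/K$ is Cohen--Macaulay) and Terai's formula $\mathrm{reg}(K^\vee) = \mathrm{pd}(S/K)$. Both parts translate between polarization-theoretic input and homological data via these dualities, with a short combinatorial rainbow argument on the side.

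For part (a), since $J$ is a polarization of the Artinian $I \sus k[x_1,\ldots,x_m]$, the ring $k[x_1,\ldots,x_m]/I$ arises from $S/J$ by modding out a regular sequence of variable differences, so $S/J$ is Cohen--Macaulay with $\mathrm{pd}(S/J) = m$ (Auslander--Buchsbaum on the Artinian quotient). Eagon--Reiner then yields linear resolution for $J^\vee$, and Terai pins its regularity at $m$, giving an $m$-linear resolution. To see the rainbow structure, recall from Lemma \ref{lem:sep-artin} and Remark \ref{rem:sep-ni} that $x_i^{n_i}$ polarizes to the monochromatic monomial $x_{i1}\cdots x_{in_i}$; any minimal generator $\mu$ of $J^\vee$ must share a variable with each such monomial, so $\mu$ contains at least one variable of every color. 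Since $\deg \mu = m$ and there are $m$ colors, $\mu$ is rainbow.

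For part (b), set $J := (J')^\vee$. Eagon--Reiner makes $S/J$ Cohen--Macaulay and Terai gives $\mathrm{pd}(S/J) = \mathrm{reg}(J') = m$, so $\dim(S/J) = N - m$ where $N = \sum_i n_i$. The color-collapsing map $\pi : k[\Xv_1,\ldots,\Xv_m] \pil k[x_1,\ldots,x_m]$, $x_{ij} \mapsto x_i$, sends $J$ to an Artinian ideal: for each $i$, the monochromatic monomial $\xi_i := x_{i1}\cdots x_{in_i}$ meets every rainbow generator of $J'$ at its color-$i$ variable and so lies in $J$, forcing some monochromatic color-$i$ monomial $\nu_i$ to be a minimal generator of $J$, with $\pi(\nu_i)$ a power of $x_i$. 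The kernel of $\pi$ is generated by the $N-m$ within-color variable differences, and since $S/J$ is Cohen--Macaulay of dimension $N-m$ while $(S/J)/\ker\pi$ is Artinian, any maximal linearly independent family of these differences is a system of parameters on $S/J$, hence a regular sequence.

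To finally identify $J$ as a \emph{polarization} of $I := \pi(J)$ (not merely as a squarefree ideal collapsing to $I$ modulo a regular sequence), I would realize $J$ as a succession of simple separations of $I$. Conditions (i) and (iii) of the definition are free from $\pi$ and from the regular-sequence property; condition (ii) requires that at each intermediate stage the two variables being separated both appear in a minimal generator. The key observation is that every variable $x_{ij}$ of $S$ already appears in a minimal generator of $J$: if $x_{ij}$ lies in a rainbow generator $g$ of $J'$, then $T := \{x_{ij}\} \cup (V \setminus g)$ is a transversal of the rainbow hypergraph (any distinct rainbow generator $g'\ne g$ has $m$ elements and therefore contains some variable outside $g$), and removing $x_{ij}$ from $T$ leaves $V\setminus g$, which misses $g$, so any minimalization of $T$ retains $x_{ij}$. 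Because collapsing within a color class preserves this visibility property, condition (ii) holds at every intermediate stage. The main obstacle is exactly this last piece of bookkeeping: the homological content flows freely from Eagon--Reiner and Terai, but matching the combinatorial definition of polarization via successive simple separations requires the rainbow-transversal argument just described.
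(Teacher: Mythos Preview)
Your argument is correct and follows essentially the same route as the paper: Eagon--Reiner to pass between Cohen--Macaulay and linear resolution, the monochromatic monomials $\xi_i = x_{i1}\cdots x_{in_i}$ to force both the rainbow structure in (a) and the Artinian quotient in (b), and a dimension count to certify that the variable differences form a regular sequence. The explicit invocation of Terai's formula is equivalent to the paper's codimension bookkeeping.

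Your final paragraph, verifying condition (ii) of the definition of simple separation at every intermediate stage, addresses a point the paper's proof simply passes over. Your transversal argument correctly shows every variable occurs in a minimal generator of $J$ itself, but the last step (``collapsing within a color class preserves this visibility property'') is asserted rather than proved. A cleaner justification, which also makes the transversal argument unnecessary: under the hypothesis that every variable occurs in $J'$, each $\xi_i$ is already a \emph{minimal} generator of $J$ (any proper monochromatic divisor would miss some color-$i$ variable, which would then be absent from all rainbow generators of $J'$); and at any intermediate collapse the image of $\xi_i$ remains minimal (its image in $I$ is the minimal generator $x_i^{n_i}$) and still contains every color-$i$ variable present at that stage. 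This handles condition (ii) throughout in one stroke.
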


\begin{proof}
a. Since  $I$ is Cohen-Macaulay of codimension $m$, the same is true for $J$.
Then the Alexander dual of $J$ is generated in degree $m$ and has
$m$-linear resolution \cite{ER98}. But if $\bfm$ is a generator for this
Alexander dual, it has a common variable with $x_{i1}x_{i2} \cdots x_{in_i}$
(the polarization of $x_i^{n_i}$) for every $i = 1, \ldots, m$. Hence
$\bfm$ must have a variable of each of the $m$ colors.

b. By \cite{ER98}, the Alexander dual $J$ of $J^\prime$ is Cohen-Macaulay of
codimension $m$. For each color class $\Xv_i = \{x_{i1}, \ldots, x_{in_i}\}$, the
ideal $J$ will contain the monomial which is the product of all these variables.

If we for every color class $i$
divide the quotient ring $k[\Xv_1, \ldots, \Xv_m]/J$ by all the variable
differences $x_{i,j} - x_{i,j-1}$ for $j = 2, \ldots, n_i$, we get a quotient
ring $k[x_1, \ldots, x_m]/I$ where $I$ is Artinian since it contains $x_i^{n_i}$
for each $i$. Hence like $J$ the ideal $I$ is Cohen-Macaulay of codimension $m$.
Then the sequence we divided out by must have been a regular sequence, and so
$J$ is a polarization of $I$.
\end{proof}

\begin{remark}
Considering the $\Xv_i$ as color classes, both the Artinian ideal $I$, the polarization $J$ and its Alexander dual are generated by colored monomials. Such ideals and the associated simplicial
complexes have been considered in various settings, like balanced simplicial
complexes by Stanley \cite{St}, relating to the colorful topological Helly theorem by
Kalai and Meshulam, \cite{KM}, and resolutions of such ideals by the second author
\cite{FlC}.
\end{remark}

In Section \ref{sec:AD} we describe the Alexander dual of any polarization $J$ of a maximal ideal power.
But while the description of the generators of $J$ is rather direct, Sections  \ref{sec:lin}  and 
\ref{sec:degto}, the description the Alexander dual
is more subtle. For instance, it is not obvious from the description that there are actually always 
$\binom{n+m-1}{m}$ generators of the Alexander dual. 

\medskip
In \cite{Nem} A. Nematbakhsh gives a precise description of when an ideal generated
by rainbow monomials has linear resolution. His terminology for rainbow monomials
of $d$ colors is "edge monomials of $d$-partite $d$-uniform clutters". He is able to
give a characterization through a remarkable connection to the article
\cite{Mig} where they give a characterization of when a point set in the 
multiprojective space $({\mathbb P}^1)^n$ is arithmetically Cohen-Macaulay
(meaning that the associated multihomogeneous coordinate ring is a Cohen-Macaulay
ring). The characterization given in \cite{Nem} by translating the one in \cite{Mig} 
is the following. 

\begin{theorem} \label{thm:conjRainbow} Let $I$ be generated by rainbow monomials in $d$ colors. Then $I$ 
has a $d$-linear resolution iff:

\begin{itemize}
    \item[a.] Whenever $m_1$ and $m_2$ are two rainbow monomials in $I$ (i.e. generators of degree $d$) with $\lcm(m_1,m_2)$ of degree $\geq d+2$, there is
    a third distinct rainbow monomial $m_3$ in $I$ dividing this least common multiple.
    \item[b.] Whenever $m_1$ and $m_2$ are two rainbow monomials {\underline{\it not}} in $I$ with $\lcm(m_1,m_2)$ of degree $\geq d+2$, there is
    a third distinct rainbow monomial $m_3$ {\underline {\it not}} in $I$ dividing this least common multiple.
\end{itemize}
\end{theorem}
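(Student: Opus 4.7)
The plan is to reduce the statement to a known characterization of arithmetically Cohen--Macaulay point configurations in multiprojective space, via two successive translations. First, I would apply the Eagon--Reiner theorem: $I$ has a $d$-linear resolution if and only if the Alexander dual ideal $I^\vee$ defines a Cohen--Macaulay quotient ring. Since each generator of $I$ is a rainbow monomial $x_{1j_1}\cdots x_{dj_d}$, the Alexander dual decomposes as $I^\vee=\bigcap_m \mathfrak{p}_m$ where $\mathfrak{p}_m=(x_{1j_1},\ldots,x_{dj_d})$, one prime for each generator.

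Second, I would interpret each prime $\mathfrak{p}_m$ geometrically: it is the multihomogeneous vanishing ideal of a coordinate point $p_m$ in the multiprojective space $\mathbb{P}^{n_1-1}\times\cdots\times\mathbb{P}^{n_d-1}$. With this dictionary, $S/I^\vee$ becomes the multihomogeneous coordinate ring of the finite point configuration $X_I=\{p_m : m \in \mathrm{gens}(I)\}$, and the Cohen--Macaulay property is exactly arithmetic Cohen--Macaulayness of $X_I$.

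Third, I would invoke the combinatorial criterion for aCM coordinate point sets from \cite{Mig}. Their characterization is a "box-filling" property: the configuration $X_I$ is aCM if and only if both $X_I$ and its complement inside the full grid $[n_1]\times\cdots\times[n_d]$ are closed under the operation of completing two-dimensional or higher axis-aligned sub-boxes with at least one further point of the same class. Matching the condition "sub-box of dimension at least two" with "$\deg \lcm(m_1,m_2) \geq d+2$" and unpacking "same class" as points in $X_I$ (generators in $I$) versus outside $X_I$ (rainbow monomials not in $I$) yields exactly conditions (a) and (b), respectively.

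The main obstacle I expect is the verification that the Migliore et al. criterion applies at the generality needed here: their original setting is points in $(\mathbb{P}^1)^d$ (the case $n_i=2$), and I would need to extend to arbitrary $n_i$. The natural strategy is induction by slicing: pick a hyperplane $x_{ij}=0$ in one factor, which partitions $X_I$ into a piece on the hyperplane and a piece off, then exploit the short exact sequence relating their coordinate rings to reduce both the aCM property and the box-filling condition to strictly smaller configurations. The careful bookkeeping in this inductive step, in particular tracking how conditions (a) and (b) interact under the slicing and how the linear resolution of $I$ restricts to the corresponding subideals, is the technical heart of the proof.
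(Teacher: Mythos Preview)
The paper does not give its own proof of this statement; it is quoted from Nematbakhsh \cite{Nem}, with the remark that he obtains it via the characterization of arithmetically Cohen--Macaulay point sets in $({\mathbb P}^1)^n$ from \cite{Mig}. Your overall plan---pass to the Alexander dual via Eagon--Reiner, then invoke an aCM criterion for a configuration in multiprojective space---is exactly the route the paper attributes to \cite{Nem}.

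There is, however, a concrete error in your dictionary. In your second step you assert that $\mathfrak{p}_m=(x_{1j_1},\ldots,x_{dj_d})$ is the vanishing ideal of a coordinate point in ${\mathbb P}^{n_1-1}\times\cdots\times{\mathbb P}^{n_d-1}$. It is not: a point in that product has vanishing ideal generated by $\sum_i(n_i-1)$ variables (all but one in each color class), whereas $\mathfrak{p}_m$ contains only $d$ variables and hence cuts out a product of coordinate \emph{hyperplanes}, a linear subspace of dimension $\sum_i(n_i-2)$. Thus $S/I^\vee$ is literally the coordinate ring of a finite point set only when every $n_i=2$, i.e.\ already in the $({\mathbb P}^1)^d$ setting of \cite{Mig}. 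You sense the difficulty in your final paragraph, but the gap is not merely that \cite{Mig} must be extended to larger projective factors; your translation itself breaks before that point. Bridging from Cohen--Macaulayness of $S/I^\vee$ in the large ring $k[\Xv_1,\ldots,\Xv_d]$ to an honest point configuration in some $({\mathbb P}^1)^N$, or establishing the combinatorial criterion by a direct argument, is the actual content of \cite{Nem}; since the paper supplies no details beyond the citation, you would need to consult that reference to complete the argument.
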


So this says that a subset $A$ of the product $\Xv_1 \cdot \Xv_2 \cdot \cdots \cdot \Xv_d$ gives
an ideal with $d$-linear resolution iff {\it both} $A$ and its complement are 
in some sense convex.

Fr\"oberg's theorem
 \cite{Fr90} characterizes when a monomial ideal generated in degree two has linear resolution. It is easily seen that both this theorem and the theorem above give the following criterion when we have
 rainbow monomials with two colors:
 If $x_ay_b$ and $x_{a^\prime}y_{b^\prime}$ are in $I$, then
 either $x_{a}y_{b^\prime}$ or $x_{a^\prime}y_b$ is in $I$.
Many attempts have been done to generalize Fr\"oberg's theorem to higher
degrees, but none fully successful. For rainbow monomials, however, the above gives such
a generalization.

\begin{example}
Let $X = \{x_1, x_2\}, Y = \{y_1, y_2 \}$ and $Z = \{z_1, z_2\}$ be three color classes
and let $I$ be the ideal generated by the six monomials
\[ x_1y_1z_2, x_1y_2z_1, x_2y_1z_1, x_2y_2z_1, x_2y_1z_2, x_1y_2z_2. \]
Then $I$ does not have linear resolution since $x_1y_1z_1$ and $x_2y_2z_2$ are {\it not} in
$I$ and their least common multiple is not divided by any distinct rainbow monomial {\it not} in $I$.

If we remove $x_1y_1z_2$ we also do not have linear resolution, since now the least
common multiple of $x_2y_1z_2$ and $x_1y_2z_2$ is not divided by any distinct rainbow monomial in $I$. If we remove {\it both} $x_1y_1z_2$ and $x_1y_2z_2$ then we do get a monomial
ideal with linear resolution.
\end{example}

\begin{problem}
 Consider an ideal $J^\prime$ generated by rainbow monomials. Is there a direct criterion on
 this ideal to tell if its Alexander dual is a polarization of a power
 of a graded maximal ideal?
 
 Since an Artinian monomial ideal has linear resolution iff it is a power of
 the graded maximal ideal, this is the same as asking for a criterion for
 the ideal $J^\prime$ to be bi-Cohen-Macaulay: The ideal
 has both linear resolution and is Cohen-Macaulay.
 \end{problem}
 
 Such a description would maybe involve reconstructing the isotone maps $X_i$ given
 in \eqref{eq:sep-iso}, from $J^\prime$. 
 In \cite{Ma15} a cellular resolution
 is computed when $J^\prime$ is the Alexander dual of the {\it standard 
 polarization} $J$ of any Artinian monomial ideal. 
 
 \begin{remark} \label{rem:conj-binary}
 If we for each color class $i$ have only two variables
 $\{x_{i0}, x_{i1}\}$, then a rainbow monomial of degree $m$ may
 be identified with a binary string, say if  $m=6$ then
 $101011$ corresponds to $x_{11}x_{20}x_{31}x_{40}x_{51}x_{61}$. Thus investigating homological properties of ideals generated by rainbow monomials with two variables of each color, corresponds to investigating algebraic and topological properties of sets of binary words.
 \end{remark}
 
 \subsection{Deformations of polarizations}
 In \cite{FN18} the second author and A.Nematbakhsh showed that the letterplace ideals
 $L(2,P)$ (which are polarizations of quadratic Artinian monomial ideals) have unobstructed deformations when the Hasse diagram is a tree. Moreover we computed the full
 deformation family of these ideals. Together with G.Scattareggia we have
 also verified that all deformations of various polarizations of quadratic powers  
 $(x_1,x_2, \ldots, x_m)^2$ lift to global deformations for $m = 3,4$ by computing
 a full global family.
 We have also verified this for the letterplace ideal $L(3,5)$ (introduced in \cite{FGH17}), a cubic ideal.

 \begin{conjecture} \label{conj:conj-defo} Every first order deformation of a polarization of an Artinian monomial ideal (regardless of whether the deformation
   is homogeneous for the standard grading) lifts to a global deformation, with
   any grading on the first order deformation respected by the global deformation. In particular, whenever such a polarization is on a (multigraded) Hilbert scheme, it is a smooth point. \end{conjecture}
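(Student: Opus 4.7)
The plan is to reduce the conjecture to a computation of the multigraded cotangent cohomology of the Stanley--Reisner ring $A = k[\Xv_1,\ldots,\Xv_m]/J$. Standard deformation theory identifies first-order deformations with $T^1(A/k,A)$ and obstructions with $T^2(A/k,A)$, and for squarefree monomial rings both admit combinatorial $\mathbb{Z}^{|\Xv|}$-graded descriptions in terms of links of faces of $\Delta(J)$, in the spirit of Altmann--Christophersen. The strategy is then to enumerate the multigraded pieces of $T^1$, exhibit an explicit global deformation for each, and verify that the obstruction map into $T^2$ vanishes on the image of the resulting Kodaira--Spencer map.

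I would assemble three natural sources of global deformations: (i) families that continuously interpolate between different polarizations $J$ and $J'$ of the same Artinian ideal $I$, parametrized combinatorially by the space of admissible tuples of isotone maps (in the case $n=2$ this is the tree configuration space of Theorem~\ref{thm:degto-T}, and in general it is governed by the linear-syzygy conditions of Section~\ref{sec:lin}); (ii) families produced by swapping a variable $x_{ij}$ for $x_{ij'}$ within a single generator, compatible with the isotone constraint of Corollary~\ref{pro:sepIsotone}; and (iii) the flat one-parameter families provided by each simple separation appearing in a chain from the standard polarization to $J$, as in \cite{AHB16}. Together these should account for the polarization-changing directions in $T^1$ as well as the deseparation directions that collapse $J$ toward $I$.

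The main obstacle is proving that the Kodaira--Spencer map of this combined family is surjective onto all of $T^1$. The conjectural ball structure of $\Delta(J)$ from Conjecture~\ref{con:conj-ball} should substantially restrict the non-vanishing multigraded pieces, since link topology is heavily constrained for a Cohen--Macaulay complex that is a ball with a well-behaved boundary sphere. For unobstructedness, a promising approach is induction on the number of simple separations used to build $J$ from $I$: if $J'$ is a further simple separation of $J$, then $J$ is cut from $J'$ by a non-zero-divisor, so unobstructedness at $J'$ descends to $J$ via a standard flat-base-change lifting argument. The grading clause is automatic, since every deformation constructed above is equivariant for the fine $\mathbb{Z}^{|\Xv|}$-grading, so any coarser grading carried by a first-order direction is respected by its global extension. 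The principal technical difficulty is turning the space of polarization-changing deformations into a genuine universal family in full generality, rather than only in the small cases verified in \cite{FN18} and the joint computations with Scattareggia.
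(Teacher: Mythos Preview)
The statement you are attempting to prove is Conjecture \ref{conj:conj-defo}; the paper does not prove it and offers only computational evidence for small cases together with the result of \cite{FN18} for letterplace ideals $L(2,P)$ with tree Hasse diagram. So there is no proof in the paper to compare your proposal against, and what you have written is a strategy sketch rather than a proof.

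That said, the strategy has genuine gaps. First, your induction step is set up in the wrong direction and does not do what you claim: by Corollary \ref{cor:sep-artin} a polarization $J$ of an Artinian ideal is already a separated model, so there is no ``further simple separation $J'$'' of $J$ to induct from. If instead you mean to pass between $J$ and $I$ along the regular sequence of variable differences, note that cutting by a non-zero-divisor does not in general transfer unobstructedness in either direction; one needs precise control over how $T^1$ and $T^2$ change under such a quotient, and that is exactly the hard part. Second, your three families of global deformations cannot be expected to span $T^1$: the paper records tangent-space dimensions between $69$ and $102$ already for polarizations of $(x_1,x_2,x_3)^3$, far larger than the finite set of polarizations or the separation parameters can account for, and the conjecture explicitly includes deformations that are not homogeneous for the standard grading. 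Third, you invoke Conjecture \ref{con:conj-ball} to constrain link topology, but that conjecture is itself open, so this step is conditional at best. Finally, the grading clause is not ``automatic'': a first-order deformation carries a multidegree but is typically not of multidegree zero, so the global family you construct must be checked to respect whatever coarser grading the first-order direction carries, not merely be finely multigraded itself.
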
 
 
If the homogeneous ideal $I$ in the polynomial ring $S$ corresponds to a point on a Hilbert scheme of subschemes of a projective space, then the tangent space of the Hilbert scheme at this point is the degree zero part
$\Hom_S(I,S/I)_0$. By explicit computation with Macaulay 2, one can verify that the
dimension of this space varies quite much for different polarizations. For instance,
for the polarizations of $(x_1,x_2,x_3)^3$, the lowest dimension of the tangent space
occurs for the standard polarization, with dimension $69$.
The largest dimension occurs
for the b-polarization, giving a dimension of $102$ (this ideal is a smooth point on the Hilbert scheme component of ideals of maximal minors of 
$3 \times 5$ matrices of linear forms). Furthermore there are also many values of the dimension of this tangent space between
$69$ and $102$. Thus if they are all smooth points, they would be on many different
components of the Hilbert scheme.

\section{Polarizations and linear syzygy edges}
\label{sec:lin}

We here give our main result, the complete combinatorial description, 
Theorem \ref{thm:LS-XD},
of all polarizations of powers of maximal ideals $(x_1, \ldots, x_m)^n$.
Write $[m] = \{1,2, \ldots, m\}$.

\subsection{Statement and examples}
The essential objects are the rank-preserving isotone maps 
$X_i : \Del_m(n) \pil B(n)$ for $i = 1, \ldots, m$ and conditions on them.
For each $\bfb \in \Del_m(n)$ we get a monomial
$m_i(\bfb) = \prod_{j \in X_i(\bfb)} x_{ij}$ in the variables
$\Xv_i = \{ x_{i1}, \ldots, x_{in} \}$. 
To the vertex $\bfb$ we associate the monomial 
\[ m(\bfb) = \prod_{i=1}^m m_i(\bfb) \] 
and let $J$ be the ideal in $k[\Xv_1, \ldots, \Xv_m]$ generated by the $m(\bfb)$.

If $B$ is a subset of $[m]$, denote by $\ben_B$ the
$m$-tuple $\sum_{i \in B} e_i$. For instance, if $B = [m]$, then
$\ben_B = (1,1,\ldots, 1)$. 

\begin{lemma} \label{lem:lin-felles}
Let $\bfc \in \Delta_m(n+1)$ have support $C \sus \{1,2,\ldots,m\}$. The monomials associated to the vertices in the down-graph $D(\bfc)$ have a common
factor of degree $\bfc - \ben_C$.
This common factor is $\prod_{i \in C} m_i(\bfc - e_i)$.
\end{lemma}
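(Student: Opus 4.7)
The plan is to verify two things about the candidate common factor
\[ g \;=\; \prod_{i \in C} m_i(\bfc - e_i): \]
first that it has the stated multidegree $\bfc - \ben_C$, and second that it divides the generator $m(\bfc - e_k)$ attached to each vertex $\bfc - e_k$ of $D(\bfc)$.

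The degree assertion is essentially bookkeeping. Each factor $m_i(\bfc - e_i)$ is a squarefree monomial in the disjoint variable set $\Xv_i$, of $\Xv_i$-degree $c_i - 1$ (since polarizations preserve the componentwise multidegree, and the $i$-th coordinate of $\bfc - e_i$ is $c_i - 1 \geq 0$ precisely because $i \in C = \Supp(\bfc)$). Summing over $i \in C$ and noting $c_j = 0$ for $j \notin C$, the multidegree of $g$ is $\bfc - \ben_C$, and $g$ is squarefree because its factors live in disjoint variable sets.

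The core of the argument is the divisibility, which I would reduce to Lemma \ref{lem:sep-artin}. Fix $k \in C$; I want $g \mid m(\bfc - e_k)$, for which it suffices to show $m_i(\bfc - e_i) \mid m_i(\bfc - e_k)$ for every $i \in C$. For $i = k$ this is trivial. For $i \neq k$, set $\bfa = \bfc - e_i$ and $\bfb = \bfc - e_k$: then $a_i = c_i - 1 \leq c_i = b_i$ and for every $j \neq i$ one has $a_j \geq b_j$ (equality when $j \neq i,k$, and $a_k = c_k > c_k - 1 = b_k$). Lemma \ref{lem:sep-artin} applies and gives $m_i(\bfa) \mid m_i(\bfb)$, which divides $m(\bfb) = m(\bfc - e_k)$.

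Since the factors $m_i(\bfc - e_i)$ for distinct $i \in C$ use variables from pairwise disjoint $\Xv_i$, the divisibilities combine to $g \mid m(\bfc - e_k)$ for all $k \in C$, so $g$ is indeed a common factor. I do not foresee a real obstacle here; the only point requiring care is setting up the coordinate comparison correctly so that Lemma \ref{lem:sep-artin} is applicable with the index $i$ playing its singled-out role, and checking that the auxiliary index $k$ supplies the needed strict inequality $a_k > b_k$ which is what makes the hypothesis $a_j \geq b_j$ nontrivial at $j = k$.
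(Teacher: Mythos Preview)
Your proof is correct and follows essentially the same route as the paper. The paper phrases the key step via the isotonicity of the maps $X_k$ (Corollary~\ref{pro:sepIsotone}), observing that $\bfc - e_j \geq_k \bfc - e_k$ implies $X_k(\bfc - e_k) \subseteq X_k(\bfc - e_j)$; you invoke Lemma~\ref{lem:sep-artin} directly, which is the content underlying that corollary, and you additionally spell out the degree computation that the paper leaves implicit.
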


\begin{proof} Fix an element $j \in C$. 
For the order $\geq_k$ we have $\bfc - e_j \geq_k \bfc - e_k$ for every
$k \in C$. Hence $X_k(\bfc - e_k)$ is contained in $X_k(\bfc - e_j)$ for
every $k \in C$. Thus $m(\bfc - e_j)$ has $m_k(\bfc - e_k)$ as a factor
for each $k \in C$.
\end{proof}

\begin{example}
Let $m = 3$ and $\bfc = (c_1,c_2,c_3)$ be in 
$\Delta_3^+(n+1)$. At the left in Figure \ref{fig:LS-Dc}
we illustrate the down triangle  $D(\bfc)$.
\begin{figure}
\begin{tikzpicture}
\draw (-1.5,0)--(1.5,0) node[anchor=east] at (-1.5,0){$(c_1,c_2,c_3-1)$}
node[anchor=south] at (0,0){$(\bfc;2,3)$};
\draw (-1.5,0)--(0,-2.4) node[anchor=south] at (2.3,0)  {$(c_1,c_2-1,c_3)$}
node[anchor=east] at (-0.75,-1.2){$(\bfc;1,3)$};
\draw (1.5,0)--(0,-2.4) node[anchor=north] at (0,-2.4)  {$(c_1-1,c_2,c_3)$}
node[anchor=west] at (0.75,-1.2){$(\bfc;1,2)$};
\filldraw (-1.5,0) circle (2pt);
\filldraw (1.5,0) circle (2pt);
\filldraw (0,-2.4) circle (2pt);
\draw (5,0)--(8,0) node[anchor=south] at (5,0){$\bfn \cdot x_{1i_3}x_{2j_3}$};
\draw (5,0)--(6.5,-2.4) node[anchor=west] at (8,0)  {$\bfn \cdot x_{1i_2} x_{3j_2}$};
\draw (8,0)--(6.5,-2.4) node[anchor=north] at (6.5,-2.4)  {$\bfn \cdot x_{2i_1}x_{3j_1}$};
\filldraw[black] (5,0) circle (2pt);
\filldraw[black] (8,0) circle (2pt);
\filldraw[black] (6.5,-2.4) circle (2pt);
\end{tikzpicture}
  \caption{}
    \label{fig:LS-Dc}
\end{figure}
Let
\[ \bfn = m_1(\bfc-e_1)\cdot m_2(\bfc-e_2) \cdot m_3(\bfc - e_3). \]
Then the monomials associated to the vertices of this down-triangle
are shown to the right in Figure
\ref{fig:LS-Dc}.
\end{example}


\begin{definition}
  An edge $(\bfc;i,j)$ in $\Delta_m(n)$ (where $\bfc \in \Del_m(n+1)$)
is a {\it linear syzygy edge} (LS-edge) if there is a monomial $\bfm$
of degree $(n-1)$ such that 
\begin{equation} \label{eq:LS-ls} m(\bfc - e_i) = x_{jr} \cdot \bfm, \quad
m(\bfc - e_j) = x_{is} \cdot \bfm 
\end{equation}
for suitable variables $x_{jr} \in \Xv_j$ and $x_{is} \in \Xv_i$.
So this edge gives a linear syzygy between the monomials $m(\bfc-e_i)$
and $m(\bfc-e_j)$. 
By Lemma \ref{lem:lin-felles} 
both $m_i(\bfc - e_i)$ and $m_j(\bfc - e_j)$ are common factors
of $m(\bfc-e_i)$ and $m(\bfc-e_j)$. So \eqref{eq:LS-ls} is equivalent to 
\[ m_p(\bfc - e_i) = m_p(\bfc - e_j)  \text{ for every } p \neq i,j,\]
or formulated in terms of the isotone maps
\begin{equation} \label{eq:LS-Xp}  X_p(\bfc-e_i)  = X_p(\bfc - e_j) \text{ for every } p \neq i,j.
\end{equation}
We denote by $\LS(\bfc)$ the set of linear
syzygy edges in the complete down-graph $D(\bfc)$.
\end{definition}

\begin{example} \label{eks:LS-QS}
In Figure \ref{fig:LS-Dc} the edge $(\bfc;2,3)$ is a linear
syzygy edge when $x_{1i_3} = x_{1i_2}$. 
Similarly the edge $(\bfc;1,3)$ is a linear syzygy edge
when $x_{2j_3} = x_{2i_1}$. In general, when the support of $\bfc$ 
has cardinality three and 
when $(\bfc;i,j)$ is {\it not} a linear syzygy edge we call it
a {\it quadratic syzygy (QS) edge}, since we then have a 
quadatic syzygy between the monomials $m(\bfc - e_i)$ and $m(\bfc - e_j)$
\end{example}

\medskip
Here is the main theorem of this article.

\begin{theorem} \label{thm:LS-XD}
The isotone maps $X_1, \ldots, X_m$  determine a polarization
of the ideal $(x_1, \ldots, x_m)^n$ if and only if for every
$\bfc \in \Delta_m(n+1)$, the linear syzygy edges $\LS(\bfc)$ contain
a spanning tree for the down-graph $D(\bfc)$.
\end{theorem}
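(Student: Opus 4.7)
The plan is to recognize that, given that $\pi(J)=M$ and $J$ is squarefree (both of which are immediate from the rank-preserving isotonicity of the $X_i$), the polarization property of $J$ reduces to a Betti number matching with $M=(x_1,\ldots,x_m)^n$; we then analyze the linear first syzygies of $J$ combinatorially via the LS-edges to obtain both implications. Since $M$ is Cohen--Macaulay of codimension $m$ with a linear resolution, $J$ polarizes $M$ if and only if $J$ is Cohen--Macaulay of codimension $m$ in $k[\Xv_1,\ldots,\Xv_m]$, equivalently (by a standard depth calculation applied to squarefree monomial ideals) if and only if the multigraded Betti numbers of $J$ agree fiberwise under $\pi$ with those of $M$.

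For the forward direction, suppose $J$ polarizes $M$ and fix $\bfc\in\Delta_m(n+1)$. The generators of $J$ whose $\pi$-image divides $x^\bfc$ are precisely $\{m(\bfc-e_i):i\in\supp\bfc\}$. By Lemma \ref{lem:lin-felles} together with rank-preservation, two such generators $m(\bfc-e_i)$ and $m(\bfc-e_j)$ admit a linear syzygy if and only if they share a common factor of degree $n-1$, equivalently if and only if $X_p(\bfc-e_i)=X_p(\bfc-e_j)$ for all $p\neq i,j$, i.e.\ if and only if $(\bfc;i,j)$ is an LS-edge. Summing the multigraded linear first Betti numbers of $J$ over $\bft$ with $\pi(\bft)=x^\bfc$ yields $|\supp\bfc|-\kappa(\bfc)$, where $\kappa(\bfc)$ is the number of connected components of the LS-graph on $\supp\bfc$; one verifies this by computing the simplicial complexes $\Delta_{<\bft}$ on generators dividing $\bft$, which decompose along the LS-components. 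The corresponding quantity for $M$ at $x^\bfc$ is $|\supp\bfc|-1$, so matching forces $\kappa(\bfc)=1$: the LS-edges span $D(\bfc)$.

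For the backward direction, assume LS-edges span $D(\bfc)$ for every $\bfc$. The approach is to produce the needed regular sequence of variable differences on $k[\Xv_1,\ldots,\Xv_m]/J$ directly via Lemma \ref{lem:sepx01}: for an appropriate ordering of simple separations $x_{i,r}-x_{i,s}$, one verifies at each stage that if both $x_{i,r}\bfm\in J$ and $x_{i,s}\bfm\in J$ then $\bfm\in J$. Using Lemma \ref{lem:sepx01}, this reduces to a combinatorial extendability statement on the Stanley--Reisner complex of $J$, which the LS-spanning hypothesis supplies: given such an $\bfm$, one locates an appropriate $\bfc\in\Delta_m(n+1)$ and a path of LS-edges in $D(\bfc)$, and iterates the defining linear syzygy relations along the path to deduce the containment of $\bfm$ in $J$ itself. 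An outer induction on the number of variable-difference separations required, verifying that the LS-spanning hypothesis descends to each intermediate separation, completes the argument.

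The main obstacle is the backward direction. While LS-spanning locally guarantees the correct number of linear first syzygies at every $\bfc$, promoting this to the global regular-sequence property requires carefully organizing the sequence of simple separations so that both the hypotheses of Lemma \ref{lem:sepx01} (squarefreeness in the collapsing variable) and the LS-spanning condition itself are preserved at every intermediate stage. The extendability-via-LS-path argument is combinatorially delicate because the LS-paths in different down-graphs $D(\bfc)$ must be chosen coherently to cover all monomials $\bfm$ simultaneously; this is the technical heart of the proof, and explains why the more direct linear-quotient shortcut succeeds only in the special cases ($m=3$ or $n=2$) where $\Delta_m(n+1)$ is simple enough that coherent choices are immediate.
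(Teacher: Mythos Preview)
Your proposal has the right high-level architecture but contains a genuine gap in each direction.

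\textbf{Forward direction.} Your approach via Betti-number matching is different from the paper's (which argues directly from the regular-sequence property by a minimal-distance contradiction on the vertices of $D(\bfc)$). Your approach could work, but the formula
\[
\sum_{\pi(\bft)=x^{\bfc}} \beta_{1,\bft}(J)=|\supp\bfc|-\kappa(\bfc)
\]
is asserted, not proved. It is not a routine computation: for a fixed $\bft$ of degree $n+1$ one does get $\beta_{1,\bft}=r(\bft)-1$, but summing over all $\bft$ in the fiber requires showing that the bipartite incidence structure between the $m(\bfc-e_i)$ and the relevant $\bft$'s contributes exactly $|C|-1$ on each LS-component $C$. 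This depends on nontrivial combinatorics of how cliques in the LS-graph share lcm's (for example, a putative $4$-cycle of LS-edges without chords would break the count, and one must check such configurations cannot occur). ``One verifies this by computing $\Delta_{<\bft}$'' does not suffice.

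\textbf{Backward direction.} Here your outline matches the paper's strategy (Lemma~\ref{lem:sepx01} plus LS-paths), but you misidentify the key object. You write that one ``locates an appropriate $\bfc\in\Delta_m(n+1)$ and a path of LS-edges in $D(\bfc)$.'' This is not enough: when $x_{i,r}\bfm$ is divisible by $m(\bfa)$ and $x_{i,s}\bfm$ by $m(\bfb)$, the points $\bfa$ and $\bfb$ typically lie in \emph{different} down-graphs, possibly far apart in $\Delta_m(n)$. What is needed is an LS-path from $\bfa$ to $\bfb$ through $\Delta_m(n)$ with the additional property that every intermediate $m(\bfu)$ divides $\lcm(m(\bfa),m(\bfb))$; this is exactly Proposition~\ref{pro:LinsyzPath}, whose three-part inductive proof is the technical heart of the paper. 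Your final paragraph gestures at ``LS-paths in different down-graphs chosen coherently,'' but neither states the required divisibility condition nor indicates how to achieve it. Also, the paper does \emph{not} verify that the LS-spanning hypothesis descends to intermediate separations; instead it constructs the LS-path once in $J$ and then argues with its image in each intermediate quotient.
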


We prove this towards the end of this section.

\begin{remark} \label{rem:lin-dim} For every $\bfc \in \Delta_m(n+1)$ which has
  support $\{i,j\}$ of cardinality $2$, it is automatic by the
  condition that the $X_p$ are rank-preserving and isotone that
  the edge $(\bfc;i,j)$ is a linear syzygy edge in $\LS(\bfc)$.
  (Condition \eqref{eq:LS-Xp} is empty.)
  So the conditions in Theorem \ref{thm:LS-XD} is automatically
  fulfilled for the $\bfc$ with support of cardinality $2$.
\end{remark}

\begin{corollary} \label{cor:LS-m3} 
When $m = 3$ the isotone maps $X_1, X_2, X_3$ give a polarization
of $(x_1,x_2,x_3)^n$ iff each down-triangle contains at most one QS-edge.
\end{corollary}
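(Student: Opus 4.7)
The plan is to deduce this directly from Theorem \ref{thm:LS-XD} by analyzing the three-variable case one cardinality of support at a time. By Theorem \ref{thm:LS-XD}, the maps $X_1, X_2, X_3$ define a polarization of $(x_1,x_2,x_3)^n$ if and only if, for every $\bfc \in \Delta_3(n+1)$, the linear syzygy edges $\LS(\bfc)$ contain a spanning tree of the complete down-graph $D(\bfc)$. I would partition the elements $\bfc \in \Delta_3(n+1)$ according to $|\Supp(\bfc)| \in \{1,2,3\}$.

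If $|\Supp(\bfc)| = 1$, then $D(\bfc)$ is a single vertex and the condition is vacuous. If $|\Supp(\bfc)| = 2$, then $D(\bfc)$ consists of a single edge, and by Remark \ref{rem:lin-dim} this edge is automatically a linear syzygy edge (the condition \eqref{eq:LS-Xp} being empty), so $\LS(\bfc)$ equals the whole edge set of $D(\bfc)$ and trivially contains its own spanning tree. Thus both of these cases impose no real constraint on $X_1,X_2,X_3$ and can be discarded.

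The only case that contributes a condition is $|\Supp(\bfc)| = 3$, which is exactly the case where $D(\bfc)$ is a down-triangle on three vertices with three edges. A spanning tree of this triangle uses exactly $2$ of the $3$ edges, so the spanning tree condition is equivalent to requiring at least two of the three edges to be linear syzygy edges, i.e.\ at most one of them to be a QS-edge in the terminology of Example \ref{eks:LS-QS}. Combining the three cases gives the stated equivalence, so the main obstacle here is only one of bookkeeping; the substance has been carried out in Theorem \ref{thm:LS-XD}.
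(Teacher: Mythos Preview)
Your proof is correct and follows essentially the same approach as the paper: partition $\bfc \in \Delta_3(n+1)$ by $|\Supp(\bfc)|$, dispose of the cases $|\Supp(\bfc)| \leq 2$ via Remark~\ref{rem:lin-dim}, and observe that for $|\Supp(\bfc)| = 3$ a spanning tree of the triangle needs two of its three edges.
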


\begin{example} \label{eks:LS-m3}
Write $x,y,z$ for $x_1,x_2,x_3$.
Consider the ideal
\begin{equation*}
J = (x_1 x_2 x_3, x_1 x_2 y_2, x_1 x_2 z_2, x_2 y_1 y_2, x_1 y_1 z_2, x_1 z_1 z_2, y_1 y_2 y_3, y_1 y_2 z_2, y_1 z_2 z_3, z_1 z_2 z_3)
\end{equation*}
which is a polarization of $(x,y,z)^3\subset k[x,y,z]$. In Figure \ref{fig:LS-xyz} we denote quadratic syzygy edges by dashed lines.
We see 
that each of the three down-triangles has exactly one quadratic syzygy edge. 
\end{example}

\begin{figure}
\begin{tikzpicture}[]

\draw  (0, 4.33) -- (-2.5,0)--(2.5,0)--(0,4.33) ;

\draw[dashed]  (-1.68, 1.44)--(0,1.44);

\draw (0,1.44)--(1.68, 1.44) ;

\draw[dashed] (-0.85,2.88)--(0,1.44);

\draw (0,1.44)--(0.85,2.88)--(-0.85,2.88) ;

\draw (-1.68,1.44)--(-0.84,0)--(0,1.44);

\draw (0,1.44)--(0.84,0);

\draw[dashed] (0.84,0)--(1.68,1.44);

\draw (-1.5,2.93) node {$x_1x_2y_2$};
\draw (1.5, 2.93) node {$x_1x_2z_2$};
\draw (0.05,4.6) node   {$x_1x_2x_3$};
\draw (-3.3,0) node   {$y_1y_2y_3$};
\draw (3.3,0) node   {$z_1z_2z_3$};
\draw (2.4,1.5) node   {$x_1 z_1z_2$};
\draw (-2.4,1.5) node   {$x_2y_1y_2$};
\draw (-0.84,-0.3) node {$y_1y_2z_2$};
\draw (0.84, -0.3) node {$y_1z_2z_3$};
\draw (0.75, 1.25) node {$x_1y_1z_2$};

\end{tikzpicture}
\caption{}
\label{fig:LS-xyz}
\end{figure}

\begin{proof}[Proof of Corollary \ref{cor:LS-m3}]
A $\bfc \in \Delta_3(n+1)$ has support
of cardinality $1,2$ or $3$. If it is $3$, we have a down-triangle, and
so at least two of the three edges must be linear syzygy edges. Equivalently
at most one of the edges is a QS-edge. 
If the cardinality is $2$, the edge is on the boundary of the simplex induced
by $\Delta_3(n)$ and it is
a linear syzygy edge by the above Remark \ref{rem:lin-dim}.
\end{proof}

\subsection{Linear syzygy paths}
Let $R \sus [m]$ and $\bfc \in \Delta_m(n+1)$ with $R$ contained
in the support of $\bfc$. Let $r,s \in R$. We say $(\bfc;r,s)$ is an {\it $R$-linear
  syzygy edge} if
\[ X_p(\bfc-e_r) = X_p(\bfc-e_s) \text{ for } p \in R\setminus \{r,s \}.
\]
Take note that by isotonicity of the $X_p$, for $p = r,s$:
\[ X_r(\bfc - e_r) \sus X_r(\bfc - e_s), \quad
  X_s(\bfc - e_s) \sus X_s(\bfc - e_r). \]
Let $D_R(\bfc)$ be the complete graph with edges $(\bfc;r,s)$ for
$r,s \in R$.

\begin{lemma} \label{lem:lin-R} Let $\bfc \in \Delta_m(n+1)$.
  If the set of linear syzygy edges in $\LS(\bfc)$ contains a spanning tree for
  $D(\bfc)$, then for each $R \sus \Supp \, (\bfc)$, the set of $R$-linear syzygy edges
  contains a spanning tree for $D_R(\bfc)$.
\end{lemma}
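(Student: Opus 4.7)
The plan is to induct on $|\Supp(\bfc) \setminus R|$, writing $S = \Supp(\bfc)$ throughout. If $R = S$ there is nothing to show: an $S$-LS-edge in the sense of the definition preceding the lemma is exactly an LS-edge, so the hypothesized spanning tree of $D(\bfc) = D_S(\bfc)$ already does the job.

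For the inductive step, pick some $q \in S \setminus R$ and apply the inductive hypothesis to $R' := R \cup \{q\}$; this yields a spanning tree $T'$ of $D_{R'}(\bfc)$ all of whose edges are $R'$-LS-edges. The plan is to modify $T'$ into a spanning tree $T''$ of $D_R(\bfc)$ using $R$-LS-edges by deleting the vertex $\bfc - e_q$ from $T'$ and patching the resulting forest back together with shortcut edges among the former neighbors of $\bfc - e_q$.

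Concretely, let $r_1, \ldots, r_d \in R$ be the indices for which $(\bfc; q, r_i)$ is an edge of $T'$. Removing the vertex $\bfc - e_q$ and its $d$ incident edges leaves a forest with $d$ components, the $i$-th of which contains $\bfc - e_{r_i}$. Adding the $d-1$ shortcut edges $(\bfc; r_1, r_2), \ldots, (\bfc; r_{d-1}, r_d)$ reconnects the components into a tree $T''$ on the vertex set $\{\bfc - e_r : r \in R\}$. The edges of $T''$ inherited from $T'$ have both endpoints in $R$, and since $R \setminus \{u,v\} \subseteq R' \setminus \{u,v\}$, any $R'$-LS-edge between two vertices of $R$ is a fortiori an $R$-LS-edge.

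The main step is to verify that each new shortcut edge $(\bfc; r_i, r_{i+1})$ is an $R$-LS-edge, obtained by composing the two $R'$-LS-edges meeting at $\bfc - e_q$: from $(\bfc; q, r_i)$ being an $R'$-LS-edge we get $X_p(\bfc - e_q) = X_p(\bfc - e_{r_i})$ for all $p \in R' \setminus \{q, r_i\} = R \setminus \{r_i\}$, and similarly for $r_{i+1}$. For $p \in R \setminus \{r_i, r_{i+1}\}$ (which lies in both index sets) these combine to $X_p(\bfc - e_{r_i}) = X_p(\bfc - e_q) = X_p(\bfc - e_{r_{i+1}})$, exactly the $R$-LS-edge condition for $(\bfc; r_i, r_{i+1})$. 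This ``transitivity through a common vertex'' is the one real observation; everything else is tree-theoretic bookkeeping.
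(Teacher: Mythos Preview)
Your proof is correct. Both your argument and the paper's hinge on the same transitivity observation---that two $R'$-LS-edges sharing the vertex $\bfc - e_q$ (with $q \notin R$) compose to an $R$-LS-edge---but the overall organization differs. The paper argues directly rather than by induction: for any $r, s \in R$, it takes the path from $\bfc - e_r$ to $\bfc - e_s$ in the given spanning tree of $D(\bfc)$, breaks it at its $R$-vertices into segments whose interior vertices all lie in $Q = \Supp(\bfc) \setminus R$, and shows each such segment collapses to a single $R$-LS-edge between its endpoints. This establishes connectivity of the $R$-LS-edge graph on $D_R(\bfc)$ in one pass, handling all of $Q$ at once. Your approach instead peels off the $Q$-vertices one at a time, maintaining an explicit spanning tree throughout. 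Your version has the minor advantage of producing the tree explicitly rather than just connectivity; the paper's is slightly more economical since it avoids the inductive bookkeeping.
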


\begin{example}
Consider the case of four variables and $\bfc = (1,1,1,1)$. Write $x,y,z,w$ for $x_1,x_2,x_3,x_4$.
On the left of Figure \ref{fig:LS-R} is the down-graph $D(\bfc)$ with the three thick edges the linear syzygy edges.

Let $R = \{2,3,4\}$. On the right is the
down-graph $D_R(\bfc)$ where the two thick edges are
the $R$-linear syzygy edges and the relevant variables
marked in bold.

\begin{figure}
\begin{tikzpicture}[]
\draw(0,0)--(4,0)--(2.5,1.4);
\draw[very thick] (2.5,1.4)--(0,0)--(1.8,-1.8)--(4,0) ;
\draw[dashed] (1.8,-1.8)--(2.5,1.4);

\draw node[anchor=east] at (-0.1,0) {$x_1y_1w_1$};
\draw node[anchor=south] at (2.5,1.4) {$x_1z_1w_1$};
\draw node[anchor=west] at (4.1,0) {$x_2y_1z_2$};
\draw node[anchor=east] at (1.8,-1.8) {$y_1z_2w_1$};

\draw[very thick] (9.5,1.4)--(7,0)--(11,0);
\draw (11,0)--(9.5,1.4);

\draw node[anchor=north] at (7,0) {$x_1{\mathbf {y_1w_1}}$};
\draw node[anchor=south] at (9.5,1.4) {$x_1{\mathbf{z_1w_1}}$};
\draw node[anchor=north] at (11,0) {$x_2 {\mathbf{y_1z_2}}$};

\end{tikzpicture}
\caption{}
\label{fig:LS-R}
\end{figure}
\end{example}

\begin{proof}
Let $Q$ be the complement of $R$ in $\Supp \, (\bfc)$.
  Let $r$ and $s$ be two elements in $R$. There is a path from $\bfc - e_r$
  to $\bfc - e_s$ in $D(\bfc)$ consisting of linear syzygy edges.
  It may be broken up
  into smaller paths: From $\bfc - e_r = \bfc- e_{r_0}$ to $\bfc-e_{r_1}$,
  from $\bfc - e_{r_1}$ to $\bfc - e_{r_2}$, ...,  from
  $\bfc- e_{r_{p-1}}$ to $\bfc - e_{r_p} = \bfc - e_s$
  where on the path from $\bfc - e_{r_{i-1}}$ to $\bfc - e_{r_i}$
  the only vertices  $\bfc -e_q$ with  $q \in R$ are the end vertices
  $q = r_{i-1}$ and $q = r_i$ while the 
  in between vertices $\cc - e_q$ all have $q \in Q$.
  We claim that each edge from $\bfc - e_{r_{i-1}}$ to $\bfc - e_{r_i}$ is an
  $R$-linear syzygy
  edge. This will prove the lemma.

  Let the path from $\bfc - r_{i-1}$ to $\bfc - r_i$ be
  \[ \bfc - e_{r_{i-1}} = \bfc - e_{q_0}, \bfc - e_{q_1}, \ldots, \bfc - e_{q_t}
    = \bfc - e_{r_{i}} \]
  where $q_1, \ldots, q_{t-1}$ are all in $Q$. We must show that
  \begin{equation} \label{eq:lin-Rlin1}
    X_p(\bfc - e_{r_{i-1}}) = X_p(\bfc - e_{r_i}) \text{ for }
    p \in R\setminus \{ r_{i-1}, r_i \}.
    \end{equation}
  But since the edges on the path are linear syzygy edges we have
  \[ X_p(\bfc - e_{q_{j-1}}) = X_p(\bfc - e_{q_j}) \text{ for }
    p \in \supp(\bfc) \setminus \{ q_{j-1}, q_j\}.\]
 Since $q_1, \ldots, q_{t-1}$ are not in $R$ we get \eqref{eq:lin-Rlin1}
  \end{proof}

Given two $m$-tuples $\bfa = (a_1, \ldots, a_m)$ and
$\bfb = (b_1, \ldots, b_m)$ in $\Delta_m(n)$.
Let $[m] = A \cup B$ be the disjoint set partition
such that 
$a_i \geq b_i$ for $i \in A$ and 
$a_i < b_i$ for $i \in B$.
We let
\begin{equation} \label{eq:LinsyzDist}
d(\bfa, \bfb) = \sum_{i \in B} (b_i - a_i) = \sum_{i \in A} (a_i - b_i)
\end{equation}
be a measure for the distance between $\bfa$ and $\bfb$.
Note that the distance may be measured using only the index set $B$ which in
turn depends on the ordered set $(\bfa, \bfb)$. It 
should thus really be written $B(\bfa, \bfb)$. When we measure the distance
between two vertices, the first will normally be denoted by a variation on
$\bfa$ and the second a variation on $\bfb$. 

We have the partial order $\bfa \leq \bfb$ if each $a_i \leq b_i$.
The least upper bound for $\bfa$ and $\bfb$ in this partial order is
\[ \bfa \vee \bfb = (\max\{a_1, b_1\}, \ldots, \max \{ a_m,b_m\} ). \]

Recall that $m(\bfa)$ and $m(\bfb)$ are the monomials
in positions $\bfa$ and $\bfb$ respectively. The following
is the main and crucial ingredient on the proof of Theorem 
\ref{thm:LS-XD}.

\begin{proposition} \label{pro:LinsyzPath} Given $\bfa, \bfb \in \Delta_m(n)$.
  Suppose for every $\bfc \in \Delta_m(n+1)$ the linear syzygy edges
  $\LS(\bfc)$ contains a spanning tree for the down-graph $D(\bfc)$.
  Then there is a path
  \[ \bfa = \bfb_0, \bfb_1, \ldots, \bfb_N = \bfb \]
  in $\Delta_m(n)$ such that:
  \begin{enumerate}
  \item Every  $\bfb_i \leq \bfa \vee \bfb $,
  \item Every $m(\bfb_i)$ divides the least common multiple
    $\lcm(m(\bfa), m(\bfb))$,
  \item The edge from $\bfb_{i-1}$ to $\bfb_i$ is a linear syzygy
    edge for each $i$.
  \end{enumerate}
  We call such a path an $LS$-path from $\bfa$ to $\bfb$.
\end{proposition}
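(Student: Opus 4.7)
The plan is to proceed by induction on the distance $d = d(\bfa, \bfb)$ defined in \eqref{eq:LinsyzDist}. The base case $d=0$ gives $\bfa = \bfb$ and the single-vertex path suffices. For $d \geq 1$ both $A$ and $B$ are nonempty; fix any $i \in A$ and $j \in B$ and set $\bfa' = \bfa - e_i + e_j$. Coordinate-by-coordinate checks using $a_i \geq b_i + 1$ and $a_j + 1 \leq b_j$ yield $\bfa' \in \Delta_m(n)$, $\bfa' \leq \bfa \vee \bfb$, and $d(\bfa', \bfb) = d - 1$. The strategy is to construct an LS-path $P$ from $\bfa$ to $\bfa'$ meeting (1)-(3), then concatenate it with the LS-path $P'$ from $\bfa'$ to $\bfb$ supplied by the inductive hypothesis.

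To build $P$, set $\bfc = \bfa + e_j = \bfa' + e_i \in \Delta_m(n+1)$; then $\bfa = \bfc - e_j$ and $\bfa' = \bfc - e_i$ are both vertices of the down-graph $D(\bfc)$. By the main hypothesis $\LS(\bfc)$ contains a spanning tree of $D(\bfc)$, hence a simple LS-path $\bfa = \bfd_0, \bfd_1, \ldots, \bfd_t = \bfa'$ in $D(\bfc)$, with each $\bfd_s = \bfc - e_{k_s}$ and the indices $k_s \in \supp(\bfc)$ all distinct. Condition (1) holds since $\bfd_s \leq \bfc \leq \bfa \vee \bfb$ (using $c_j = a_j + 1 \leq b_j$), and condition (3) is immediate by construction.

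The heart of the argument is condition (2). The defining LS-edge property \eqref{eq:LS-Xp} says that along an edge $(\bfc; k_{s-1}, k_s)$ of $P$ only $X_{k_{s-1}}$ and $X_{k_s}$ can change. Since $P$ is simple, each $p \in \supp(\bfc)$ appears at most once among $k_0, \ldots, k_t$, say as $k_{r_p}$ if at all. Consequently $X_p(\bfd_s)$ takes at most three values as $s$ varies along $P$: the initial value $X_p(\bfa)$ for $s < r_p$, the intermediate value $X_p(\bfc - e_p)$ at $s = r_p$, and the final value $X_p(\bfa')$ for $s > r_p$. The first lies in $X_p(\bfa) \cup X_p(\bfb)$ trivially, and the second lies in $X_p(\bfa)$ by rank-preserving isotonicity applied to the comparison $\bfa \geq_p \bfc - e_p$. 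Thus condition (2) for $P$ reduces to the single claim that $X_p(\bfa') \subseteq X_p(\bfa) \cup X_p(\bfb)$ for every $p$.

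The hard part is establishing this last claim. The case $p = i$ is immediate from $\bfa \geq_i \bfa'$, giving $X_i(\bfa') \subseteq X_i(\bfa)$. For $p = j$, the comparison $\bfb \geq_j \bfa'$ works directly when $B = \{j\}$ but may fail for larger $B$, and for $p \notin \{i,j\}$ the vectors $\bfa$ and $\bfa'$ are incomparable in $\geq_p$ despite having the same $p$-coordinate. Overcoming this forces either a careful greedy choice of the swap $(i,j)$ so that the new element of $X_j(\bfa')$ lies in $X_j(\bfb)$, or an appeal to Lemma \ref{lem:lin-R} to restrict the sub-path inside $D(\bfc)$ to a subset $R \subseteq \supp(\bfc)$ chosen to avoid the problematic indices. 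Once the key claim is secured, the endpoint bound $m(\bfa') \mid L$ forces $\lcm(m(\bfa'),m(\bfb)) \mid L$, so the inductive LS-path $P'$ from $\bfa'$ to $\bfb$ satisfies condition (2) with respect to $L$; concatenating $P$ with $P'$ produces the desired LS-path from $\bfa$ to $\bfb$.
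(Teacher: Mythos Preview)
Your architecture is right---induct on $d(\bfa,\bfb)$, move from $\bfa$ to a neighbor $\bfa'$ one step closer to $\bfb$, then recurse---and your analysis of how $X_p$ varies along a simple LS-path in $D(\bfc)$ is correct. But the proof is not complete: you explicitly leave open the ``key claim'' $X_p(\bfa') \subseteq X_p(\bfa)\cup X_p(\bfb)$ for all $p$, and neither of your two suggested fixes works as stated.

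A greedy choice of $(i,j)$ does not suffice. For $p\in B\setminus\{j\}$ (which is nonempty as soon as $|B|\geq 2$) the points $\bfa'$ and $\bfb$ are incomparable for $\geq_p$, so isotonicity gives no control over $X_p(\bfa')$; likewise $\bfa$ and $\bfa'$ are $\geq_p$-incomparable for $p\notin\{i,j\}$. No choice of swap repairs this in general. The appeal to Lemma~\ref{lem:lin-R} is closer to what is needed, but a single application with $R=A_1\cup\{\beta\}$ (which is essentially what the paper does) only forces $X_p(\bfa')=X_p(\bfa)$ for $p\in A_1\setminus\{\alpha\}$; it says nothing about $p\in A_0\cup(B\setminus\{\beta\})$. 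So the inductive path from $\bfa'$ to $\bfb$ need \emph{not} stay under $\lcm(m(\bfa),m(\bfb))$.

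The paper resolves this by a substantially more elaborate argument. It introduces a neighborhood $P(\bfb)$ of $\bfb$ (points already connected to $\bfb$ by an LS-path under $\lcm(m(\bfa),m(\bfb))$, with $B$-coordinates frozen), defines $A_1$ as those $A$-indices witnessed by some $\bfb'\in P(\bfb)$ with strict inequality, and then takes the first step from $\bfa$ via an $R$-LS edge with $R=A_1\cup\{\beta\}$. The inductive path now goes from $\bfa+e_\beta-e_\alpha$ to some $\bfb'\in P(\bfb)$, and is \emph{not} claimed to lie under $\lcm(m(\bfa),m(\bfb))$; instead one locates the last vertex $\bfb^p$ on it whose $(A_0\cup B)$-coordinates differ from $\bfb'$, proves via three technical Facts that the tail $\bfb^p,\ldots,\bfb'$ does lie under the right lcm, checks $d(\bfa,\bfb^p)=d-1$, and finally splices three paths: $\bfa\to\bfb^p$ (new induction), $\bfb^p\to\bfb'$ (the tail), and $\bfb'\to\bfb$ (from $P(\bfb)$). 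The definition of $A_1$ via $P(\bfb)$ is exactly what makes Fact~\ref{fact:KKk} go through, and this is the step your outline is missing.
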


We first show this when the distance between $\bfa$ and $\bfb$ is one.

\begin{lemma} When the distance between $\bfa$ and $\bfb$ is one, there
is an LS-path from $\bfa$ to $\bfb$.
\end{lemma}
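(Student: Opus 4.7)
The plan is to identify the pair $\bfa, \bfb$ with two vertices of a common down-graph and then apply the spanning tree hypothesis directly.

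First I would set up the geometry. Since $d(\bfa,\bfb) = 1$, there is exactly one $j \in B$ with $b_j = a_j + 1$ and exactly one $k \in A$ with $a_k = b_k + 1$ (and $a_i = b_i$ for all other $i$). Let $\bfc = \bfa + e_j = \bfb + e_k$, so $\bfc \in \Delta_m(n+1)$, and $\bfa = \bfc - e_j$, $\bfb = \bfc - e_k$ are two vertices of the down-graph $D(\bfc)$. A direct computation gives $\bfa \vee \bfb = \bfc$, so every vertex $\bfc - e_\ell$ with $\ell \in C := \Supp(\bfc)$ satisfies condition (1) automatically; in particular, any path inside $D(\bfc)$ qualifies.

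Next I would invoke the hypothesis: $\LS(\bfc)$ contains a spanning tree $T$ of $D(\bfc)$. Take the unique simple path
\[ \bfa = \bfb_0,\, \bfb_1,\, \ldots,\, \bfb_N = \bfb \]
in $T$, with $\bfb_i = \bfc - e_{\ell_i}$, $\ell_0 = j$, $\ell_N = k$, and $\ell_0, \ldots, \ell_N$ pairwise distinct. Condition (3) holds by construction, so only condition (2) remains.

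The key step, and the one that requires a little care, is the divisibility $m(\bfb_i) \mid \lcm(m(\bfa), m(\bfb))$. Fix $p \in C$; I must show $m_p(\bfb_i) \mid \lcm(m_p(\bfa), m_p(\bfb))$ for every $i$. The crucial observation is: if the edge $(\bfc; \ell_{i-1}, \ell_i)$ is an LS-edge, then $X_p(\bfb_{i-1}) = X_p(\bfb_i)$ whenever $p \notin \{\ell_{i-1}, \ell_i\}$. Thus $X_p$ changes only at edges incident to $\bfc - e_p$. Because the path is simple, I split into cases:
\begin{itemize}
\item If $p = j$: The edge with $j$ as endpoint is only the first one (since the path is simple and $\ell_0 = j$), so $X_j(\bfb_1) = \cdots = X_j(\bfb_N) = X_j(\bfc - e_k)$, and $X_j(\bfb_0) = X_j(\bfc - e_j) \subseteq X_j(\bfc - e_k)$ by isotonicity of $X_j$. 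All lie in the $j$-part of the lcm.
\item If $p = k$: Symmetric to the previous case.
\item If $p \neq j, k$ and $\bfc - e_p$ is not on the path: $X_p$ is constant on the path, equal to both $X_p(\bfc - e_j)$ and $X_p(\bfc - e_k)$, hence contained in the lcm.
\item If $p \neq j, k$ and $\bfc - e_p = \bfb_{i_0}$ for some unique $i_0$: then $X_p$ is constant equal to $X_p(\bfc - e_j)$ before step $i_0$, equals the small common value $X_p(\bfc - e_p)$ at step $i_0$, and is constant equal to $X_p(\bfc - e_k)$ after. All three are contained in $X_p(\bfc - e_j) \cup X_p(\bfc - e_k)$, i.e., the $p$-part of the lcm.
\end{itemize}

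The main obstacle is precisely this last bookkeeping: using that the path is a simple path in the spanning tree lets me guarantee each $X_p$ takes only one ``$j$-side'' value and one ``$k$-side'' value along the path, both bounded above by the $p$-part of $\lcm(m(\bfa), m(\bfb))$. Combining over all $p \in C$ yields condition (2), completing the construction of the required LS-path.
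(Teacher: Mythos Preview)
Your proof is correct and follows essentially the same approach as the paper: identify $\bfa$ and $\bfb$ as two vertices of a single down-graph $D(\bfc)$, take the unique path between them in a spanning tree of $\LS(\bfc)$, and verify condition (2) by tracking how $X_p$ changes along the path using the LS-edge condition and isotonicity. Your case split (separating out $p=j$, $p=k$, and whether $\bfc - e_p$ lies on the path) is just a slightly more explicit version of the paper's two cases.
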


\begin{proof}
  In this case there is a unique $\bfc \in \Delta_m(n+1)$ such that
$\bfa = \bfc - e_i$ and $\bfb = \bfc - e_j$, and then $\bfa \vee \bfb = \bfc$.
Let $T$ in the linear syzygy edges $\LS(\bfc)$ be a spanning tree
for $D(\bfc)$. Then there is a unique path from $\bfa$
  to $\bfb$ in $T$. We show that for any $m(\bfu)$ on this path, 
  $m(\bfu)$ divides $\lcm (m(\bfa), m(\bfb))$. It is enough to show
  for any $k \in \Supp \, (\bfc)$ that any $x_k$-variable in
  $m(\bfu)$ is contained in either the $x_k$-variables of $m(\bfa)$
  or the $x_k$-variables of $m(\bfb)$.

  \medskip
  \textit{Case 1}. If the path from $\bfa$ to $\bfb$ does not contain $\bfc - e_k$, then since
  all edges on the path are linear syzygy edges,
  $X_k(\bfb_{i-1}) = X_k(\bfb_i)$ for every $i$.

  \medskip
  \textit{Case 2}. It the path from $\bfa$ to $\bfb$ contains $\bfc - e_k$, say this is
  $\bfb_t$, then:
  \begin{itemize}
  \item   $X_k(\bfb_{i-1}) = X_k(\bfb_i)$ for $i < t$ and these are
    all equal to $X_k(\bfa)$,
\item $X_k(\bfb_{i}) = X_k(\bfb_{i+1})$ for $i > t$ and these are
  all equal to $X_k(\bfb)$,
\item $X_k(\bfb_t) = X_k(\bfc - e_k)$ is contained in both
  $X_k(\bfa)$ and $X_k(\bfb)$ by the isotonicity of $X_k$.
\end{itemize}
\end{proof}

  \begin{proof}[Proof of Proposition \ref{pro:LinsyzPath}.]
  We do this in three parts.
  
  \noindent{\bf Part A.} In this part we define the setting.
  Take distinct $\bfa$ and $\bfb$ in $\Delta_m(n)$. Assume the
  distance $d(\bfa,\bfb) \geq 2$ since the case of distance $1$ is done above. Let
  \[ B = B(\bfa, \bfb) = \{ i \, | \, b_i > a_i \}, \quad
    A_> = \{ i \, | \, a_i > b_i\},  \quad
    A_= = \{ i \, | \, a_i = b_i\}, \]
  and $A = A_> \cup A_=$. We want to consider $\bfb^\prime$ which are
  in some sense ``close'' to $\bfb$. Let $P(\bfb)$ consist of all
  $\bfb^\prime \in \Delta_m(n)$ such that
  \begin{itemize}
  \item[1.] \begin{itemize}
     \item For $i$ in $B$, $\bfb$ and $\bfb^\prime$ have equal $i$'th coordinate.
     \item For $i$ in $A$, $b_i^\prime \leq a_i$.
       \end{itemize}
  \item[2.] There is some LS-path from $\bfb^\prime$ to $\bfb$ where the vertices
  $\bfu$ on the path satisfy
    \begin{itemize}
\item $\bfu \leq \bfa \vee \bfb$ (which since we are assuming an LS-path, follows from 1 above),
\item $m(\bfu)$ divides $\lcm (m(\bfa), m(\bfb))$.
\end{itemize}
\end{itemize}

Now let the subset $A_1$ of $A$ consist of all coordinate indices $i$ in $A$
such that there is some $\bfb^\prime$ in $P(\bfb)$ with strict
inequality $b^\prime_i < a_i$. Let $A_0$ be the complement $A \setminus A_1$. It is the intersection of all the
$A_=$ associated to $\bfb^\prime$ in $P(\bfb)$.
In particular note i) that $A_1 \supseteq A_>$ (since $\bfb \in P(\bfb)$). So $A_1$ is not empty and $A_0 \sus A_=$.
\begin{equation} \label{eq:LS-PB}
ii)\,\,  b_i^\prime = a_i = b_i \text{ for } i \in A_0, \quad iii) \,\,
d(\bfa, \bfb^\prime) = d(\bfa, \bfb) \text{ for } \bfb^\prime \in P(\bfb),
\end{equation}
the latter because the $B$-sets $B(\bfa,\bfb^\prime) = B(\bfa,\bfb)$
and the distance may be measured by this, confer \eqref{eq:LinsyzDist}.

\medskip
Choose $\beta \in B$ and let $R = A_1 \cup \{ \beta \}$. 
Consider the down-graph $D_R(\bfa + e_{\beta})$. With $\beta$ fixed,
there is by Lemma \ref{lem:lin-R}
an $R$-linear syzygy edge $(\bfa + e_\beta;\beta,\alpha)$ for some
$\alpha$ in $A_1$. This is an edge from
$\bfa$ to $\bfa + e_\beta - e_\alpha$.
Since $\alpha \in A_1$ there is a $\bfb^\prime  \in P(\bfb)$
with $b^\prime_\alpha < a_\alpha$.
Then the $B$-sets (see \eqref{eq:LinsyzDist}) $B(\bfa + e_\beta-e_\alpha,\bfb^\prime) \sus B(\bfa, \bfb^\prime)$, with 
equality unless $a_\beta + 1 = b_\beta$ in which case the the former set comes from removing $\beta$ from the latter. In any case
the distances
\[ d(\bfa + e_\beta - e_\alpha, \bfb^\prime) =  d(\bfa, \bfb^\prime) -1 \, (=
  d(\bfa, \bfb) -1). \]
By induction on distance there is an LS-path
\begin{equation} \label{eq:LinsyzPath}
\bfa + e_\beta - e_\alpha = \bfb^0, \bfb^1, \cdots, \bfb^N= \bfb^\prime.
\end{equation}
So we have 
\begin{itemize}
\item each $m(\bfb^j)$ divides $\lcm(m(\bfa + e_\beta - e_\alpha),
  m(\bfb^\prime))$,
\item each $\bfb^j \leq (\bfa + e_\beta- e_\alpha) \vee \bfb^\prime \leq
    \bfa \vee \bfb$.
\end{itemize}
There may be elements on this LS-path such that $m(\bfb^j)$ does not divide
$\lcm(m(\bfa),m(\bfb))$.
But $m(\bfb^j)$ will divide if we get sufficiently close to $\bfb^\prime$ as
we show in Fact \ref{fact:KKdivab} below. 
If $\bfa + e_\beta - e_\alpha$ and $\bfb^\prime$ have equal $i$'th coordinates for
every $i \in B$, the distance $d(\bfa, \bfb)$ would be $1$, but we are assuming
the distance is $\geq 2$. So $\bfa + e_\beta - e_\alpha$ and $\bfb^\prime$
do not have equal $i$'th coordinate for every $i \in B$. 
Let $\bfb^p$ be the last element on the path \eqref{eq:LinsyzPath} for which
$b^p_{k} \neq b_{k}^{\prime}$ for some $k  \in A_0 \cup B$.

\medskip
\noindent {\bf Part B.} In this part,
in Facts \ref{fact:KKbpk}, \ref{fact:KKdivab}, and
\ref{fact:KKk}, we investigate in detail the path from $\bfb^p$ to
$\bfb^\prime = \bfb^N$. 

\begin{fact} \label{fact:KKbpk} There is a unique $k \in A_0 \cup B$ such that
$b^p_k \neq b_k^\prime$. For every $i \in A_0 \cup B$ and $j = p, \ldots, N$
we have $b^j_i = b^\prime_i = b_i$, save for $j = p$ and $i = k$ when
$b^p_k = b^{p+1}_k -1$ which is $b_k^\prime - 1 = b_k -1$.
\end{fact}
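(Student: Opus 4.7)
The plan is to pin down the structure of the step from $\bfb^p$ to $\bfb^{p+1}$ on the LS-path \eqref{eq:LinsyzPath}. Two ingredients are available: first, consecutive vertices on an LS-path differ by a single swap $\bfb^{j+1} = \bfb^j + e_s - e_r$ for some coordinates $r,s$, so any two consecutive vertices disagree on exactly these two indices; second, every vertex satisfies the upper bound $\bfb^j \leq (\bfa + e_\beta - e_\alpha) \vee \bfb^\prime \leq \bfa \vee \bfb$ built into the LS-path property.

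I would first translate this upper bound into a coordinate inequality on $A_0 \cup B$. For $i \in A_0 \subseteq A_=$ we have $(\bfa \vee \bfb)_i = a_i = b_i$, and for $i \in B$ we have $(\bfa \vee \bfb)_i = b_i$. Combined with $b_i^\prime = b_i$ for $i \in A_0 \cup B$ (property 1 of $P(\bfb)$ for $i \in B$, and \eqref{eq:LS-PB}(ii) for $i \in A_0$), this yields $b^j_i \leq b_i = b_i^\prime$ for every $j$ and every $i \in A_0 \cup B$.

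Next, by the defining property of $\bfb^p$, for all indices $j > p$ and all $i \in A_0 \cup B$ we have $b^j_i = b_i^\prime$; in particular $\bfb^{p+1}$ agrees with $\bfb^\prime$ on $A_0 \cup B$. Since $\bfb^p$ differs from $\bfb^{p+1}$ only at the two swap coordinates $r,s$, any index in $A_0 \cup B$ where $\bfb^p$ fails to equal $\bfb^\prime$ must lie in $\{r,s\}$. Writing the swap as $b^p_r = b^{p+1}_r + 1$ and $b^p_s = b^{p+1}_s - 1$, suppose $r \in A_0 \cup B$. Then $b^p_r = b^{p+1}_r + 1 = b_r^\prime + 1$, violating $b^p_r \leq b_r^\prime$ established above. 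Hence $r \notin A_0 \cup B$, so the only possible disagreement index is $s$; since by construction there \emph{is} some disagreement, $s$ must lie in $A_0 \cup B$ and be the unique index $k$ in the statement. The relation $b^p_k = b^{p+1}_k - 1 = b_k^\prime - 1 = b_k - 1$ is then immediate, and $b^j_i = b_i^\prime = b_i$ for $j \in \{p,\ldots,N\}$, $i \in A_0 \cup B$ away from the single exceptional pair $(j,i) = (p,k)$ is a direct restatement of what has been derived.

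The main obstacle is keeping the various definitions ($P(\bfb)$, $A_0$, $A_1$, the LS-path properties) aligned and establishing the key inequality $b^j_i \leq b_i$ for $i \in A_0 \cup B$; once that is in hand, the one-swap structure of LS-paths immediately forces the asymmetric behavior between the two swap coordinates $r$ and $s$, yielding uniqueness and the precise decrement at $k$.
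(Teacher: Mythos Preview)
Your proof is correct and follows essentially the same approach as the paper: you establish the key inequality $b^j_i \leq b_i$ for $i \in A_0 \cup B$ from the LS-path bound $\bfb^j \leq \bfa \vee \bfb$, then use the two-coordinate swap structure of an LS-edge to force the ``$+1$'' coordinate out of $A_0 \cup B$, leaving the unique disagreement at the ``$-1$'' coordinate $k$. The only cosmetic difference is that the paper first names $k$ as the (a priori chosen) disagreement index and then determines the sign, whereas you first label the swap by signs $(r,s)$ and deduce which one is $k$; the logic is identical.
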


\begin{proof}
  Clearly, by the definition of $\bfb^p$, we have that the $b^j_i$
  are all equal to $b^\prime_i$ for $i \in A_0 \cup B$ and
  $j = p+1, \ldots, N$. 
 If $i \in B$ then $b^\prime_i = b_i$ since $\bfb^\prime \in P(\bfb)$. 
 If $i \in A_0$ then $b^\prime_i = b_i$ by \eqref{eq:LS-PB}.
Furthermore we must have $b^p_k = b^{p+1}_k \pm 1$
  which is $b_k^\prime \pm 1$. Note that
  \begin{equation} \label{eq:KKleq}
    b_i^p \leq \max \{a_i,b_i\} = b_i, \quad i \in A_0 \cup B.
  \end{equation}
Since the edge from $\bfb^p$ to $\bfb^{p+1}$ is an LS-edge, there are exactly 
two coordinates $k, \ell$ where $\bfb^p$ and $\bfb^{p+1}$ are distinct. 
Since $b^{p+1}_k = b_k$ we must by \eqref{eq:KKleq} have $b^p_k = b^{p+1}_k -1$.
Then we will have $b^p_\ell = b^{p+1}_\ell + 1$. If $\ell \in A_0 \cup B$ then 
$b^{p+1}_\ell = b^\prime_\ell = b_\ell$ which together with \eqref{eq:KKleq}
gives a contradiction.
Thus we have a unique $k$ in $A_0 \cup B$. Whence when $i \in A_0 \cup B$ and $i \neq k$
we have $b^p_i = b^{p+1}_i = b^\prime_i = b_i$.
\end{proof}

\begin{fact} \label{fact:KKdivab} For all $\bfb^j$ with $j = p, \ldots, N$
  we have

  i) $\bfb^j \leq \bfa \vee \bfb$,

  ii) $m(\bfb^j)$ divides $\lcm(m(\bfa),m(\bfb))$.
\end{fact}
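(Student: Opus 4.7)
First I would observe that part (i) is essentially already in hand: the LS-path property gives $\bfb^j \leq (\bfa + e_\beta - e_\alpha) \vee \bfb^\prime$, and each of the two joinands is bounded above by $\bfa \vee \bfb$ (the second via $\bfb^\prime \in P(\bfb)$, the first because only the $\beta$- and $\alpha$-coordinates of $\bfa$ have moved and both stay within $\max(a_*,b_*)$, using $a_\beta + 1 \leq b_\beta$ since $\beta \in B$).

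For part (ii), my plan is to reduce to a color-by-color statement and then split by cases. Concretely, I would show that for each $i \in [m]$ and each $j \in \{p,\ldots,N\}$,
\[ X_i(\bfb^j) \subseteq X_i(\bfa) \cup X_i(\bfb). \]
The LS-path from $\bfb^0 = \bfa + e_\beta - e_\alpha$ to $\bfb^N = \bfb^\prime$ already gives $X_i(\bfb^j) \subseteq X_i(\bfa + e_\beta - e_\alpha) \cup X_i(\bfb^\prime)$, and $\bfb^\prime \in P(\bfb)$ gives $X_i(\bfb^\prime) \subseteq X_i(\bfa) \cup X_i(\bfb)$. So all I really need is to absorb the term $X_i(\bfa + e_\beta - e_\alpha)$ into $X_i(\bfa) \cup X_i(\bfb)$, and I would distinguish whether $i$ lies in $A_1$ or in $A_0 \cup B$.

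If $i \in A_1$, I would use the $R$-linear syzygy edge $(\bfa + e_\beta;\alpha,\beta)$ with $R = A_1 \cup \{\beta\}$: for $i \in A_1 \setminus \{\alpha\}$ it delivers the equality $X_i(\bfa + e_\beta - e_\alpha) = X_i(\bfa)$, while for $i = \alpha$ the comparison $\bfa \geq_\alpha \bfa + e_\beta - e_\alpha$ combined with isotonicity of $X_\alpha$ gives $X_\alpha(\bfa + e_\beta - e_\alpha) \subseteq X_\alpha(\bfa)$. If instead $i \in A_0 \cup B$, I would invoke Fact \ref{fact:KKbpk}: for $j \geq p+1$ the $i$-coordinate of $\bfb^j$ is constantly $b_i$, so every LS-edge along the tail leaves the $i$-th color fixed and hence $X_i(\bfb^j) = X_i(\bfb^\prime)$; for $j = p$ either the edge $\bfb^p \to \bfb^{p+1}$ also fixes the $i$-coordinate (when $i \neq k$), so $X_i(\bfb^p) = X_i(\bfb^{p+1})$, or else $i = k$ and $b^p_k = b^{p+1}_k - 1$, in which case isotonicity of $X_k$ gives $X_k(\bfb^p) \subseteq X_k(\bfb^{p+1})$, reducing to the tail case.

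The hard part will be the color $i = \beta$. Direct isotonicity at $\bfa + e_\beta - e_\alpha$ is useless here, since the $\beta$-coordinate has \emph{increased} from $a_\beta$ to $a_\beta + 1$ so $X_\beta$ can only \emph{grow}; and the $R$-linear syzygy condition, being stated for indices in $R \setminus \{\alpha,\beta\}$, by design tells us nothing about $\beta$ either. This is precisely why the statement is restricted to $j \geq p$ and why Fact \ref{fact:KKbpk} had to be set up first: the structural information $b^j_\beta = b_\beta$ throughout $j \geq p+1$ (which uses $\beta \in B$) is exactly what throws $\beta$ into the $A_0 \cup B$ case above and rescues the argument.
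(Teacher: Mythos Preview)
Your proposal is correct and follows essentially the same route as the paper: part (i) via $(\bfa+e_\beta-e_\alpha)\vee\bfb^\prime\leq\bfa\vee\bfb$, and part (ii) by the same case split $t\in A_1$ (handled by the $R$-linear syzygy edge plus isotonicity at $t=\alpha$) versus $t\in A_0\cup B$ (handled by Fact~\ref{fact:KKbpk} tracing $X_t(\bfb^j)$ back to $X_t(\bfb^\prime)$, with the $t=k$ step using isotonicity at the edge $\bfb^p\to\bfb^{p+1}$). Your closing observation about why $\beta$ forces the restriction to $j\geq p$ is exactly the point; the only cosmetic slip is that your sentence ``all I really need is to absorb $X_i(\bfa+e_\beta-e_\alpha)$'' overstates the uniformity of the plan, since for $i\in A_0\cup B$ you (rightly) bypass that term entirely.
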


\begin{proof} Part i) is already noted when we defined the path from
  $\bfa + e_\beta - e_\alpha$ to $\bfb^\prime$. Now we do part ii).
  We know that $m_t(\bfb^j)$ divides $\lcm(m_t(\bfa + e_\beta - e_\alpha),
  m_t(\bfb^\prime))$ for every $t$.

  a. Let $t \in A_1$. There is an $R = A_1 \cup \{\beta \}$-linear syzygy
  between $m(\bfa)$ and $m(\bfa + e_\beta - e_\alpha)$ and so
  $m_t(\bfa) = m_t(\bfa + e_\beta - e_\alpha)$ for $t \in A_1 \setminus
  \{ \alpha \}$. For $t = \alpha$ then $m_\alpha(\bfa + e_\beta- e_\alpha)$
  divides $m_\alpha(\bfa)$ by $X_\alpha$ being isotone.
  From this and the defining requirements on $\bfb^\prime$,
  it follows that $m_t(\bfb^j)$ divides $\lcm(m(\bfa),m(\bfb))$ for $t \in A_1$.

  b. Let now $t \in A_0 \cup B$. The edges on the path between
  $\bfb^p$ and $\bfb^\prime$ are LS-edges. It follows then
  from Fact \ref{fact:KKbpk}
  that for each 
  $t \in (A_0 \cup B) \setminus \{ k \}$ and $j = p, \ldots, N$ that
  $m_t(\bfb^j) = m_t(\bfb^\prime)$.
  When $t = k$, since $b^p_k = b_k^{p+1} - 1$ the part $m_k(\bfb^p)$ divides $m_k(\bfb^{p+1})$ and we will further have
  all $m_k(\bfb^j)$ equal for $j = p+1, \ldots, N$, since these
  $\bfb^j$ are related by LS-edges, and have the same $k$'th
  coordinate. The upshot is that also $m_k(\bfb^j)$
  divides $m_k(\bfb^\prime)$.
  Thus $m_t(\bfb^j)$ divides $m_t(\bfb^\prime)$ for every $t \in A_0 \cup B$.
  Since $\bfb^\prime \in P(\bfb)$, the $m_t(\bfb^\prime)$ divide
$\lcm(m(\bfa),m(\bfb))$ and we are done.
\end{proof}

The following is the main technical detail that makes
the proof work. It ensures that we can use induction on
distance in Part C. To achieve this we need $A_0$ as
small as possible, and therefore introduced the
neighbourhood $P(\bfb)$ of $\bfb$. (But this had to be balanced against
$A_1$, the complement of $A_0$ in $A$, not being too big in order to
construct the LS-path in \eqref{eq:LinsyzPath} by induction.)

\begin{fact} \label{fact:KKk}
  The coordinate $k \in B$.
\end{fact}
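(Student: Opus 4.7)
The plan is to argue by contradiction: assume $k \in A_0$ and deduce that $\bfb^p$ itself lies in $P(\bfb)$ with $b^p_k < a_k$, which would force $k \in A_1$ and contradict $k \in A_0$. The idea is essentially that the ``lost'' coordinate $k$ is a fresh witness to membership in $A_1$.

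Suppose for contradiction $k \in A_0 \subseteq A_=$, so $a_k = b_k$. First I verify the coordinate conditions for $\bfb^p \in P(\bfb)$. For $i \in B$, Fact \ref{fact:KKbpk} tells me that the only coordinate on which $\bfb^p$ differs from $\bfb^\prime$ among indices in $A_0 \cup B$ is the index $k$, and under our assumption $k \notin B$, so $b^p_i = b^\prime_i = b_i$ for all $i \in B$. For $i \in A$, Fact \ref{fact:KKdivab}(i) gives $\bfb^p \leq \bfa \vee \bfb$, and since $i \in A$ means $a_i \geq b_i$, I get $b^p_i \leq \max(a_i,b_i) = a_i$. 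So condition 1 in the definition of $P(\bfb)$ holds.

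Next I need an LS-path from $\bfb^p$ to $\bfb$ satisfying the path conditions in the definition of $P(\bfb)$. I concatenate the LS-path $\bfb^p,\bfb^{p+1},\ldots,\bfb^N = \bfb^\prime$ with an LS-path from $\bfb^\prime$ to $\bfb$ whose existence is provided by $\bfb^\prime \in P(\bfb)$. For the first segment, Fact \ref{fact:KKdivab} furnishes both $\bfb^j \leq \bfa \vee \bfb$ and $m(\bfb^j)$ divides $\lcm(m(\bfa),m(\bfb))$; for the second segment, these hold by definition of $P(\bfb)$. Hence the concatenated path witnesses $\bfb^p \in P(\bfb)$.

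Finally I read off the $k$-coordinate: by Fact \ref{fact:KKbpk}, $b^p_k = b_k - 1$, and under the assumption $k \in A_0$ this equals $a_k - 1 < a_k$. Therefore $\bfb^p \in P(\bfb)$ exhibits a strict inequality at coordinate $k$, which by the definition of $A_1$ forces $k \in A_1$, contradicting $k \in A_0$. So $k \in A_0 \cup B$ must lie in $B$. There is no real obstacle beyond careful bookkeeping; the content of Fact \ref{fact:KKk} is already encoded in the preceding Facts \ref{fact:KKbpk} and \ref{fact:KKdivab} together with the minimality built into the definition of $A_0$.
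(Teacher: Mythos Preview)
Your proof is correct and follows essentially the same approach as the paper: assume $k \in A_0$, verify that $\bfb^p \in P(\bfb)$, and derive a contradiction from the $k$-coordinate. Your write-up is in fact more explicit than the paper's, particularly in spelling out the concatenated LS-path from $\bfb^p$ to $\bfb$ via $\bfb^\prime$ and in checking condition~1 of the definition of $P(\bfb)$ coordinate by coordinate; the paper leaves these verifications implicit and phrases the final contradiction via \eqref{eq:LS-PB} rather than via the definition of $A_1$, but these are the same argument.
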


\begin{proof}
  By Fact \ref{fact:KKdivab} $m(\bfb^j)$ divides $\lcm(m(\bfa),m(\bfb))$ and
  $\bfb^j \leq \bfa \vee \bfb$ for $j = p, \ldots, N$.
  If $k \in A_0$ then by Fact \ref{fact:KKbpk} $b^p_i = b^\prime_i = b_i$ for 
  each $i \in B$. So $\bfb^p$ fulfills the requirement to be in $P(\bfb)$.
  But then by \eqref{eq:LS-PB} $b^p_k = a_k = b_k$ contradicting 
  Fact \ref{fact:KKbpk}.
  \end{proof}

\medskip
\noindent {\bf Part C.} In this last part we splice three
paths together to make an LS-path from $\bfa$ to $\bfb$.
Consider the distance between $\bfa$ and $\bfb^p$.
For $i \in A$ we have $b^p_j \leq a_j$ since $\bfb^p \leq \bfa \vee \bfb$.
For $i \in B \setminus \{ k \}$ we have $b^p_i = b_i^\prime = b_i > a_i$
and when $i = k$ then $a_k < b_k$ and $b^p_k = b_k -1$.
Thus, by looking at the terms with coordinates in $B$,
the distance $d(\bfa, \bfb^p) = d(\bfa, \bfb) - 1$.
By induction there is an LS-path from $\bfa$ to $\bfb^p$.
We now have three LS-paths which we splice:

\begin{itemize}
\item The LS-path from $\bfa$ to $\bfb^p$. All $m(\bfu)$ on this path
  have i) $\bfu \leq \bfa \vee \bfb^p \leq \bfa \vee \bfb$
  and ii) $m(\bfu)$ divides $\lcm(m(\bfa), m(\bfb^p))$ which again divides
  $\lcm(m(\bfa),m(\bfb))$ by Fact \ref{fact:KKdivab}.
\item The LS-path from $\bfb^p$ to $\bfb^\prime$. We refer to Fact
  \ref{fact:KKdivab} concerning the terms here.
\item The LS-path from $\bfb^\prime$ to $\bfb$ with properties required 
  by the definition of $P(\bfb)$. 
\end{itemize}

  Splicing these three LS-paths together we get a path of linear syzygy
  edges from $m(\bfa)$ to $m(\bfb)$ where all $m(\bfu)$ on this path
  have i) $\bfu \leq \bfa \vee \bfb$ and ii) $m(\bfu)$ divides
  $\lcm(m(\bfa), m(\bfb))$. Thus we have an LS-path from $\bfa$ to $\bfb$.
  \end{proof}
  
\subsection {Proofs of the main Theorem \ref{thm:LS-XD}}

  We show first Part a., that if the isotone maps $\{X_i \}$ give a polarization,
  then for each $\bfc \in \Delta_m(n+1)$ the linear syzygy edges
  $\LS(\bfc)$ of the down-graph $D(\bfc)$ contain a spanning tree for this down-graph.
  
  \begin{proof}[Proof of Theorem \ref{thm:LS-XD}, Part a.] We assume that
    the isotone maps $\{X_i \}$ give an ideal $J$ which is a polarization.
    We shall prove that every down-graph $D(\bfc)$ contains a
    spanning tree of linear syzygy edges.
  For simplicity we shall assume
  $\Supp \, (\bfc)$ has full support $[m] = \{1,2,\ldots, m\}$.
  The arguments work just as well in the general case.
  Since by Lemma \ref{lem:lin-felles}
  \[ \bfm = \prod_{i = 1}^m  m_i(\bfc - e_i) \] of degree $\bfc - \bfen $ is
  a divisor of $m(\bfc - e_v)$ for any $\bfc - e_v$ in $D(\bfc)$, we may
  write $m(\bfc - e_v) = \bfm \cdot n(\bfc - e_v)$ where $n(\bfc - e_v)$ has
  degree $\bfen - e_v$. For two distinct vertices $\bfc-e_v $ and $\bfc-e_w $ in
  $D(\bfc)$ we define the distance $d(m(\bfc-e_v), m(\bfc-e_w))$ to be the
  number of $k \in [m]$ such that either:
  \begin{itemize}
    \item The (unique) $x_k$-variables of $n(\bfc-e_v)$ and of
      $n(\bfc-e_w)$ are distinct,
    \item $k = v$ (then $n(\bfc-e_v)$ has no $x_v$-variable),
    \item $k = w$ (then $n(\bfc-e_w)$ has no
      $x_w$-variable),
    \end{itemize}

    Note that if the distance between $m(\bfc-e_v)$ and $m(\bfc-e_w)$ is $2$,
    then the set of $k$'s is $\{v,w\}$ and there is a linear syzygy between these monomials.
    Suppose now the vertices of $D(\bfc)$ can be divided into two distinct
    subsets $V_1$ and $V_2$ such that there is no linear syzygy edge between
    a vertex in $V_1$ and a vertex in $V_2$.

    Let $\bfc-e_v $ in $V_1$ and $\bfc -e_w$ in $V_2$ be such that the distance $d$
    between $m(\bfc-e_v)$ and $m(\bfc-e_w)$ is minimal. We must have $d \geq 3$ and
    the number of vertices $m \geq 3$. 
    For simplicity we may assume $v = 1$ and $w = 2$
    and that we may write
    \[ n(\bfc-e_2) = x_{1i_1}x_{3i_3} \cdots x_{mi_m}, \quad
      n(\bfc-e_1) = x_{2j_2}x_{3j_3} \cdots x_{mj_m}, \]
    where $x_{pi_p} \neq x_{pj_p}$ for $p = 3,\ldots, d$ and
    $x_{pi_p} = x_{pj_p}$ for $p > d$ where $d \geq 3$. 

    Consider the graded ring $k[\Xv_1, \ldots, \Xv_m]/J$ and divide out by the
    regular sequence $x_{pi_p} - x_{pj_p}$ for $p = 4, \ldots, d$.
This is a regular sequence, since we assume we have a polarization.
    We get
    a quotient algebra $k[\Xv_1^\prime, \ldots, \Xv_m^\prime]/J^\prime$ and denote by
    $x_p$ the class $\overline{x_{pi_p}} = \overline{x_{pj_p}}$ for
    $p \geq 4$. In $J^\prime$ we have generators
    \begin{align*}
      \overline{m}(\bfc-e_2) = \overline{\bfm} \cdot \overline{n}(\bfc-e_2), \quad
      & \overline{n}(\bfc-e_2) = x_{1i_1}x_{3i_3}x_4 \cdots x_m \\
      \overline{m}(\bfc-e_1) = \overline{\bfm} \cdot \overline{n}(\bfc-e_1), \quad
      & \overline{n}(\bfc-e_1) = x_{2j_2}x_{3j_3}x_4 \cdots x_m
    \end{align*}
        Now $x_{3i_3} - x_{3j_3}$ is a non-zero divisor of
        $k[\Xv_1^\prime, \cdots , \Xv_m^\prime]/J^\prime$.
    Consider
    \[ (x_{3i_3} - x_{3j_3})x_{1i_1}x_{2j_2}x_4 \cdots x_m \cdot \overline{\bfm}. \]
    It is zero in this quotient ring, and so
        \[ \overline{\bfm^\prime} = x_{1i_1}x_{2j_2}x_4 \cdots x_m \cdot
        \overline{\bfm} \]
    is zero in this quotient ring and so must be a generator of
    $J^\prime$ of degree $\bfc - e_3$.
    But then the generator of this degree in the polarization $J$ must be
    \[ \bfm^\prime = x_{1i_1}x_{2j_2}x_{4k_4}\cdots x_{mk_m}  \cdot \bfm \]
    where each $k_p$ is either $i_p$ or $j_p$.
    Hence all $k_p = i_p = j_p$ for $p > d$. But then we see that the
    distance between $\bfm^\prime$ and $m(\bfc-e_2)$ is $\leq d-1$ and similarly
    the distance
    between $\bfm^\prime$ and $m(\bfc-e_1)$ is $\leq d-1$.
    Whether $\bfm^\prime$ is now in $V_1$
    or in $V_2$ we see that this contradicts $d$ being the minimal distance.
  \end{proof}
  
\begin{proof}[Proof of Theorem \ref{thm:LS-XD}, Part b.]

  We shall now prove that if each down-graph $D(\bfc)$ contains a spanning tree of
  linear syzygy edges, then $J$ will be a polarization. 
  Order the variables in each $\Xv_i$ in a sequence $x_{i1}, x_{i2}, \ldots, x_{in}$.
  Let $\Xvp_i$ consist of $x_{i1}, \ldots, x_{ip_i}, x_i$ so we have a surjection
 $\Xv_i \pil \Xvp_i$ for each $i$ sending $x_{ij}$ to itself for $j \leq p_i$,
 and to $x_i$ for $j > p_i$. 
 Denote the image of $J$ in $k[\Xv_1^\prime, \ldots , \Xv_m^\prime]$ by $J^\prime$ and the image of $m(\bfa)$ by
 $m^\prime(\bfa)$.
 The quotient ring $k[\Xvp_1, \ldots, \Xvp_m]/J^\prime$
 is obtained from $k[\Xv_1, \ldots \Xv_m]/J$ by dividing out by variable 
 differences $x_{ij} - x_{i,j+1}$ for $i = 1, \ldots m$ and $j >  p_i$. We 
 assume this is a regular sequence. We show that if we now divide out by 
 $x_{i,p_i}- x_{i,p_i+1}$ this is a non-zero divisor of $k[\Xvp_1, \ldots, \Xvp_m]/J^\prime$.
 By continuing we get eventually that
  $k[x_1, \ldots, x_m]/(x_1, \ldots, x_m)^n$ is a regular quotient of
  $k[\Xv_1, \ldots, \Xv_m]/J$ and so $J$ is a polarization of $(x_1, \ldots, x_m)^n$.

  \medskip
  We write $x_i^\prime = x_{i,p_i}$. 
  Suppose $(x_i^\prime - x_i) \cdot \bff = 0$ in
  $k[\Xvp_1, \ldots, \Xvp_m]/J^\prime $ where $\bff$ is a polynomial.
  By Lemma \ref{lem:sepx01} (with $x_i^\prime = x_{i,p_i}$ taking the place
  of $x_0$), we must show that if $\bfm$ is a monomial such that 
  $x_i^\prime \cdot \bfm = 0$ and $x_i \cdot \bfm = 0$, 
  then $\bfm = 0$ in the quotient ring.
  So some generator
  $m^\prime(\bfa)$ of $J^\prime $ divides $x_i^\prime \bfm $ and some generator
  $m^\prime(\bfb)$ divides $x_i \bfm $.

  By Proposition \ref{pro:LinsyzPath} there is an LS-path from
  $m(\bfa)$ to $m(\bfb)$ consisting of linear syzygy edges and such that
  each $\bfu$ on this path has $\bfu \leq \bfa \vee \bfb$ and $m(\bfu)$ on this 
  path divides $\lcm (m(\bfa), m(\bfb))$. The image
  $m^\prime(\bfu)$ then divides $x_i^\prime x_i \bfm $. 
  We will show by induction on the length of the
  path that some monomial $m^\prime(\bfu)$ on this path divides $\bfm $, and
  so $\bfm $ is zero in the quotient ring
  $k[\Xvp_1, \ldots, \Xvp_m]/J^\prime$.

  \medskip
  If the path has length one, there is a linear syzygy edge between
  $m(\bfa)$ and $m(\bfb)$. Write
  \[ x_i^\prime \cdot \bfm = m^\prime(\bfa) \cdot n^0(\bfa), \quad
    x_i \cdot \bfm = m^\prime(\bfb) \cdot n^0(\bfb). \]
  Write also $\bfm = (x_i^{\prime})^p (x_i)^q \cdot \nn $ where
  $\nn$ does not contain $x_i^\prime$ or $x_i$.
  If none of $m^\prime(\bfa)$ or $m^\prime(\bfb)$ divides $\bfm $, then
  \[ m^\prime(\bfa) = (x_i^\prime)^{p+1} (x_i)^{q^\prime}
    \cdot n^1(\bfa), \quad
     m^\prime(\bfb) = (x_i^\prime)^{p^\prime} (x_i)^{q+1}
     \cdot n^1(\bfb), \]
   where $p^\prime \leq p$ and $q^\prime \leq q$ (and $n^1(\bfa)$ and
   $n^1(\bfb)$ do not contain $x_i^\prime$ or $x_i$).
   But since the edge from
   $\bfa $ to $\bfb $ is a linear syzygy edge, we must have $p^\prime = p$,
   $q^\prime = q$. But a linear syzygy edge involves variables of distinct $x_i$-type, which is
   not so here. Thus one of $m^\prime(\bfa)$ or $m^\prime(\bfb)$ must divide $\bfm$.

   \medskip
   Suppose now the path has length $\geq 2$. 
   
   \medskip
   \noindent {\bf Case $a_i \geq b_i$.}
   Let $\bfa $ to $\bfa^\prime $ be the first edge along the path.
   Then the coordinate $a_i^\prime \leq a_i$. 

   If i) the coordinates $a_i$ and $a_i^\prime $ are equal, the $x_i$-type variables
   of $m(\bfa)$ and $m(\bfa^\prime)$ are the same, since this is a linear
   syzygy edge. Since $m^\prime(\bfa)$ divides $x_i^\prime \bfm$ we get that
   $m_i^\prime(\bfa^\prime)$ divides $x_i^\prime \bfm$. 
   If ii) $a_i^\prime < a_i$ then when going from $m(\bfa) $ to $m(\bfa^\prime)$ some $x_i$-type variable
   drops out from $m_i(\bfa)$ by isotonicity of $X_i$ and so also 
   $m_i^\prime(\bfa^\prime)$ divides $x_i^\prime \bfm$.
   
   Since the path from $\bfa$ to $\bfb$ is an LS-path, $m^\prime(\bfa^\prime)$ divides $x_i^\prime x_i \bfm$.
     For $j \neq i$, then $m_j^\prime(\bfa^\prime)$ must divide $\bfm$ since $m^\prime_j(\bfa^\prime)$ contains no $x_i$-type variable. The upshot is that $m^\prime(\bfa^\prime)$ divides $x_i^\prime \bfm$. Considering the LS-path from $\bfa^\prime$ to 
     $\bfb$, by induction on path length, some $m^\prime(\bfu)$ along this path divides $\bfm $.

\medskip
  \noindent {\bf Case $a_i \leq b_i$.}
   Let $\bfb $ to $\bfb^\prime $ be the first edge along the path going from $\bfb$
   to $\bfa$.
   Then the coordinate $b_i^\prime \leq b_i$. Exactly the same argument as in the case
   above works in this case also.
\end{proof}

\section{Three variables case: Classification of isotone maps} \label{sec:trevar}
In the three variables case there is a particular nice classification of the isotone
rank preserving maps from $\Delta_3(n) \pil B(n)$. 

Let $\Delta_3(n)$ have the 
partial order $\geq_1$.
The symmetric group $S_{n}$ acts on the Boolean poset $B(n)$ and so
acts on the set of isotone rank-preserving maps 
\[ X : \Delta_3(n) \pil B(n). \]
Note that for $\bfc \in \Delta_3^+(n)$, an edge $(\bfc;2,3)$ being a quadratic syzygy edge,
see Example \ref{eks:LS-QS}, is a property invariant under the action of the group $S_n$.
The following completely describes the combinatorics of such isotone maps.
This is quite nice for the three variables case. For the $m$-variables case in general
it seems harder to see a nice description.

\begin{proposition} Let $n \geq 1$.
  The orbits under the action of $S_{n}$ of rank-preserving
  isotone maps $X : \Delta_3(n) \pil
  B(n)$ are in one to one correspondence
  with subsets of $\Delta_3^+(n+1)$. An isotone map $X$ corresponds to those $\bfc \in \Delta_3^+(n+1)$ such that the $(\bfc;2,3)$-edges of $\Delta_3(n)$ are quadratic syzygy edges for $X$. 
  
  Since $\Delta_3^+(n+1)$ is in
  bijection to $\Delta_3(n-2)$ which has cardinality $\binom{n}{2}$, the number of such orbits is $2^{\binom{n}{2}}$.
  \end{proposition}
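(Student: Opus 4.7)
My plan is to show that the assignment $X \mapsto \phi_X := \{\bfc \in \Delta_3^+(n+1) : X(\bfc - e_2) \neq X(\bfc - e_3)\}$ descends to a bijection between $S_n$-orbits of rank-preserving isotone maps $X : \Delta_3(n) \to B(n)$ and subsets of $\Delta_3^+(n+1)$. Well-definedness on orbits is immediate, since any $\sigma \in S_n$ acts on $B(n)$ by a bijection preserving equality of subsets, so the LS/QS status of each edge $(\bfc;2,3)$ is $S_n$-invariant. The bulk of the work is to establish bijectivity.

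For bijectivity, I would produce an explicit canonical representative per orbit. Using the $S_n$-action, first normalize $X$ along the ``right-edge'' chain $(n,0,0), (n-1,0,1), \ldots, (0,0,n)$ by setting $X(k,0,n-k) = \{1,2,\ldots,k\}$ for each $k$. The stabilizer of this complete flag of subsets in $S_n$ is trivial, so this pins down a unique orbit representative. I would then argue that, given $\phi$, the normalized representative is uniquely reconstructed by processing the remaining vertices rank-by-rank from rank $n$ down to rank $0$, and within each rank in the order $j = 1, 2, \ldots, n-k$: for a vertex $\bfa = (k,j,n-k-j)$ with $j \geq 1$, the value $X(\bfa)$ is forced by isotonicity from its parent(s) at rank $k+1$ (already determined) together with the LS/QS constraint of $\phi$ at $\bfc = (k,j,n-k-j+1)$, which compares $X(\bfa)$ to the sibling $X(k,j-1,n-k-j+1)$.

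At an interior vertex (with two parents $\bfb_1, \bfb_2$), the common grandparent at rank $k+2$ forces $|X(\bfb_1) \cap X(\bfb_2)| \geq k$; hence in the QS case $X(\bfa)$ must equal this intersection (of size exactly $k$), and in the LS case it must equal the sibling, which fits inside the intersection. For surjectivity I would verify these constraints are always jointly satisfiable for any $\phi$. The counting $|\Delta_3^+(n+1)| = \binom{n}{2}$ follows from the shift bijection $\bfc \mapsto \bfc - \bfen$ to $\Delta_3(n-2)$, giving $2^{\binom{n}{2}}$ orbits.

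The main obstacle I anticipate is the left-edge vertex at each rank, where only one parent exists and the isotonicity constraint alone leaves $n-k$ admissible size-$k$ subsets in the QS case. Here the reconstruction requires a look-ahead: different boundary choices yield different intersections with the inward neighbor, hence different forced values at the interior vertex one rank below and therefore distinct LS/QS values at the adjacent lower $\bfc$. I would formalize this as a dictionary between the residual freedom at the boundary and the LS/QS outcomes at the diamond immediately below it, so that prescribing $\phi$ at every diamond globally and consistently pins down every boundary value. Once this dictionary is verified, both injectivity (two normalized maps with the same $\phi$ agree) and surjectivity (every $\phi$ is realized) follow from the inductive reconstruction.
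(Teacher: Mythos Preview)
Your normalization along the right edge and the $S_n$-invariance of the QS/LS data are correct, and so is the idea of reconstructing a unique canonical representative from the subset $\phi$. The gap is in your inductive order. Processing rank by rank from the top, you want to determine $X(\bfa)$ for $\bfa = (k,j,n-k-j)$ from the sibling $(k,j-1,\ast)$ and the two parents $\bfb_1 = (k+1,j-1,n-k-j)$, $\bfb_2 = (k+1,j,n-k-j-1)$, claiming that in the QS case $X(\bfa)$ equals $X(\bfb_1)\cap X(\bfb_2)$, of size exactly $k$. But $\bfb_1,\bfb_2$ are themselves the two endpoints of the edge $(\bfc';2,3)$ with $\bfc' = (k+1,j,n-k-j) \in \Delta_3^+(n+1)$; if \emph{that} edge is LS then $X(\bfb_1)=X(\bfb_2)$, the intersection has size $k+1$, and the QS constraint at your $\bfc$ only excludes the sibling value, leaving $k$ admissible size-$k$ subsets. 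So the ambiguity you flag at the left edge in fact recurs at every interior vertex lying below an LS edge, and the ``look-ahead'' you sketch does not cover this.

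The paper avoids this not by looking ahead but by changing the induction order so that the \emph{common child} of $\bfa$ and its sibling is already known. One processes the second coordinate $r = 1,2,\ldots$ in turn, and within each fixed $r$ the ranks $p=1,2,\ldots$ upward. When reaching $B = X(p,r,\ast)$ one then already knows the sibling $C = X(p,r-1,\ast)$ and the common parent $D = X(p+1,r-1,\ast)$ from the previous $r$, together with the common child $A = X(p-1,r,\ast)$ from the previous step at the current $r$ (starting from $X(0,r,\ast)=\emptyset$). The elementary observation that for $A\subseteq B,C\subseteq D$ with $|A|=p-1$, $|B|=|C|=p$, $|D|=p+1$ and $A,C,D$ fixed one has either $B=C$ or $B=A\cup(D\setminus C)$ now determines $B$ uniquely from the single QS/LS bit at $(p,r,n-p-r+1)$, with no residual freedom anywhere. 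Reorder your induction so that the child is available and the argument goes through cleanly.
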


  \begin{proof} In each orbit there is exactly one isotone map where the
    sets
    \begin{equation*} \label{eq:trevarX0}  X(0,0,n) \sus  X(1,0,n-1) \sus X(2,0,n-2) \sus  \cdots \sus
      X(0,0)
      \end{equation*} 
      are \begin{equation} \label{eq:trevar1n}
          \emptyset \sus \{1\} \sus \{1,2\} \sus \cdots \sus \{1,2,\ldots, n\} 
    \end{equation}
    so we need only consider isotone maps which fulfill this condition and show
    that they are in one-one correspondence with subsets $Q$ of $\Delta_3(n-2)$.

    Given such an isotone map $X$ we get a subset $Q$ of QS-edges, those
    $(c_1,c_2,c_3)$ in $\Delta_3^+(n+1)$ such that $X(c_1,c_2-1,c_3) \neq X(c_1,c_2,c_3-1)$. Conversely given
    a subset $Q$ of $\Delta_3^+(n+1)$ we show that this determines uniquely
    an isotone $X$ fulfilling \eqref{eq:trevar1n} whose associated set of QS-edges is $Q$. The essential idea is the following simple observation:

    \medskip
    Let $A$ be a set of cardinality $p-1$ where $p \geq 1$, $B,C$ sets containing $A$ of
    cardinality $p$, and $D$ a set of cardinality $p+1$ containing $B$ and $C$.
    For fixed $A,B,D$, then either $C = B$ or $C = A \cup (D\setminus B)$.
    In the latter case note that $A = B \cap C$ and $D = B \cup C$. 

    \medskip
    Note now first that $X(0,r,n-r)$ is always the empty set. 
    Let $A,B,C,D$ be the four sets of cardinalities respectively
    $p-1, p, p$ and $p+1$:
    \[ X(p-1,1,n-p), X(p,1,n-p-1), X(p,0,n-p), X(p+1,0,n-p-1). \]
    The last two are known by \eqref{eq:trevar1n}. By induction on $p$ the first is known. Then the second is completely determined by whether the edge
    $((p,1,n-p);2,3)$ is a LS- or QS-edge. Thus given $Q$ we may determine
    all the $X(p,1,n-p-1)$ for $p = 1, \ldots, n-1$. Then we may continue
    and similarly determine all $X(p,2,n-p-2)$ for $p = 1, \ldots, n-2$ and
    so on.
    \end{proof}

\section{Degree two case: Polarizations of $(x_1,x_2, \ldots, x_m)^2$}
\label{sec:degto}
We show that polarizations of the second power
$(x_1,x_2, \ldots, x_m)^2$ of the maximal ideal are in one-to-one
correspondence with oriented trees with edges labels $1,2, \ldots,m$.
Moreover we show that the isomorphism classes of polarizations are
in one-to-one correspondence with trees on $(m+1)$ vertices.

\medskip
First we recall a construction given in \cite{Fl09}. Given a directed
tree $T$ (the edges are directed) with edges labelled $1,2,\ldots,m$.
The label of an edge $e$ is denoted $l(e)$
Let $\Xv_i = \{x_{i0}, x_{i1} \}$ for $i = 1,\ldots,m$. We construct
a monomial ideal $K(T)$ in $k[\Xv_1,\ldots,\Xv_m]$ generated by 
$(m+1)$ monomials, one for each vertex of $T$.
Consider a vertex $v$ of the tree $T$. If an edge $e$ in the tree $T$ is pointing 
in the direction towards $v$ (this does not meant that $e$ and $v$ have to be incident) let $e_{to}(v) = 1$ and if
$e$ is pointing in the opposite  direction, let $e_{to}(v) = 0$.
To the vertex $v$ of the tree $T$ associate a monomial $m_v$ of degree $m$
\[ m_v = \prod_{e \in T} x_{l(e),e_{to}(v)}, \] and let
$I(T)$ be the ideal in $k[\Xv_1,\ldots,\Xv_m]$ generated
by these monomials. We see that the $m_v$ are rainbow monomials. By \cite{Fl09} this is a Cohen-Macaulay
monomial ideal of codimension two  with $m$-linear resolution.

We now construct another monomial ideal $J(T)$ in this polynomial ring
as follows. For each pair of distinct vertices $v,w$ of $T$ there is
a unique path (forgetting the direction of the edges) between
$v$ and $w$. Let $e$ be the edge on this path incident to $v$ and 
$f$ the edge on this path incident to $w$. Define the monomial $m_{v,w}$ of degree two
\[ m_{v,w} = x_{l(e),e_{to}(w)} x_{l(f),f_{to}(v)}, \]
and let $J(T)$ be the ideal generated by all these monomials.

\begin{theorem} \label{thm:degto-T} Let $T$ be a directed tree with edges labelled by $1,2,\ldots, m$. \begin{itemize}
\item[a.] The ideals $J(T)$ and $I(T)$ are Alexander duals.
    \item[b.] The ideal $J(T)$ is a polarization of $(x_1, \ldots, x_m)^2$, and every polarization of the latter ideal is 
    isomorphic as a monomial ideal to some $J(T)$.
    \item[c.] Two polarizations $J(T)$ and $J(T^\prime)$ are isomorphic as
    monomial ideals
    if and only if the underlying (unlabelled, undirected) trees of $T$ and $T^\prime$ are isomorphic.
\end{itemize}
\end{theorem}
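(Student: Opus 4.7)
My plan is to establish parts (a), (b), and (c) in sequence.

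For part (a), I will verify directly that the monomials $m_{v, w}$ comprise exactly the minimal generators of $I(T)^\vee$. For distinct $v, w \in V(T)$ with tree path $v = v_0, v_1, \ldots, v_k = w$, set $e = (v_0, v_1)$ and $f = (v_{k-1}, v_k)$. The variable $x_{l(e), e_{to}(w)}$ divides $m_u$ iff $u$ lies on $w$'s side of $e$, and $x_{l(f), f_{to}(v)}$ divides $m_u$ iff $u$ lies on $v$'s side of $f$. The union of these two sides covers $V(T)$: if $u$ lies on $v$'s side of $e$, then the tree path from $u$ to $v$ avoids $e$ and so avoids $v_1, \ldots, w$, hence avoids $f$ as well, so $u$ lies on $v$'s side of $f$. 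Thus $m_{v, w} \in I(T)^\vee$, with minimality immediate by testing at $u = v$ or $u = w$. For exhaustion, for each variable $x = x_{i, \epsilon}$ the set $A_x = \{u : x \notin m_u\}$ is one side of the color-$i$ edge, hence a subtree of $T$, and a variable set $S$ is a transversal iff $\bigcap_{x \in S} A_x = \emptyset$. Helly's theorem for subtrees forces minimal transversals to have size exactly two (singletons fail since each $A_x$ is non-empty), and enumerating these by $\binom{m}{2}$ edge pairs plus $m$ single colors gives the $\binom{m+1}{2}$ pairs $\{v, w\}$.

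For the first statement of part (b), $J(T)$ is squarefree, and $\pi(J(T)) = (x_1, \ldots, x_m)^2$ since the map $\{v, w\} \mapsto \{l(e), l(f)\}$ (as an unordered multiset, with $l(e) = l(f)$ when $v, w$ are adjacent) is a bijection between vertex pairs and the $\binom{m+1}{2}$ monomials $x_i x_j$ with $i \leq j$. Combining (a) with the theorem of \cite{Fl09} that $I(T)$ has $m$-linear resolution, the Eagon-Reiner theorem yields that $k[\Xv_1, \ldots, \Xv_m]/J(T)$ is Cohen-Macaulay of codimension $m$, hence of Krull dimension $m$. The $m$ variable differences $x_{i, 0} - x_{i, 1}$ have quotient the Artinian ring $k[x_1, \ldots, x_m]/(x_1, \ldots, x_m)^2$ of dimension zero, so they form a system of parameters; on a Cohen-Macaulay ring this is automatically a regular sequence, and $J(T)$ is a polarization.

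For the second statement of (b), I work via Theorem \ref{thm:LS-XD}. A polarization of $(x_1, \ldots, x_m)^2$ is encoded by a function $\alpha : \{(i, j) : i \neq j\} \to \{0, 1\}$ with $m(e_i + e_j) = x_{i, \alpha(i, j)} x_{j, \alpha(j, i)}$, and the spanning-tree condition on each down-triangle $D(e_i + e_j + e_k)$ reduces to: at least two of $\alpha(i, j) = \alpha(i, k)$, $\alpha(j, i) = \alpha(j, k)$, $\alpha(k, i) = \alpha(k, j)$ hold. My aim is to show such $\alpha$ correspond bijectively to directed labeled trees, by induction on $m$. The key combinatorial lemma, and the main obstacle, is the existence of a \emph{leaf edge}, an index $i$ with $\alpha(i, \cdot)$ constant: if no such $i$ existed, for each $i$ one picks $j_i, k_i$ with $\alpha(i, j_i) \neq \alpha(i, k_i)$, and the triple condition then forces $\alpha(j_i, i) = \alpha(j_i, k_i)$ and $\alpha(k_i, i) = \alpha(k_i, j_i)$; tracking these propagating equalities across all triples yields a contradiction. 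Given a leaf edge $i$, induction on the restricted $\alpha'$ on $[m] \setminus \{i\}$ produces a tree $T'$ on $m$ vertices; one then attaches a new vertex via an edge colored $i$ at the unique vertex of $T'$ whose signature matches $(\alpha(j, i))_{j \neq i}$, oriented according to the constant value of $\alpha(i, \cdot)$. That the attachment tuple is indeed a valid signature in $T'$ is forced by the remaining triple conditions involving $i$.

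For part (c), recall from Remark \ref{rem:sep-group} that $S_m \ltimes (S_2)^m$ acts on polarizations, with $S_m$ permuting color classes (which relabels the edges of $T$) and $(S_2)^m$ swapping the two variables within each class (which reverses individual edge orientations). Hence $(S_m \ltimes (S_2)^m)$-orbits on directed labeled trees coincide with isomorphism classes of unlabeled, undirected trees on $(m + 1)$ vertices, and combining with the bijection above yields (c).
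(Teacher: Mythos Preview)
Your arguments for part (a) and the first half of part (b) are correct, and your Helly-based approach to (a) is a nice alternative to the paper's method (the paper instead counts: it cites \cite{Fl09} for the fact that $I(T)$ is Cohen--Macaulay of codimension two with $m$-linear resolution, computes its multiplicity as $\binom{m+1}{2}$, and concludes that the Alexander dual has exactly that many quadratic generators, which must therefore be the $m_{v,w}$ you already exhibited).

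The genuine gap is in the second half of (b), where your approach diverges sharply from the paper's. You reduce via Theorem~\ref{thm:LS-XD} to a combinatorial condition on a function $\alpha$, and then assert a ``leaf edge lemma'': some $\alpha(i,\cdot)$ must be constant. Your justification --- ``tracking these propagating equalities across all triples yields a contradiction'' --- is not a proof; the propagation you describe does not obviously terminate in a contradiction without an organizing principle. The lemma is in fact true, and here is an actual argument: let $p=\min_i \min(|P_i^0|,|P_i^1|)$ where $P_i^\epsilon=\{j:\alpha(i,j)=\epsilon\}$. If $p=0$ you are done. Otherwise pick $i_0$ achieving the minimum with (say) $|P_{i_0}^1|=p$, and let $j\in P_{i_0}^1$. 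For every $k\in P_{i_0}^0$ the triple $(i_0,j,k)$ has $\alpha(i_0,j)\neq\alpha(i_0,k)$, forcing $\alpha(j,k)=\alpha(j,i_0)$; thus $P_{i_0}^0\cup\{i_0\}$ lies in one block of $j$'s partition, so the other block sits inside $P_{i_0}^1\setminus\{j\}$ and has size at most $p-1$, contradicting minimality of $p$. Your attachment step (``the unique vertex of $T'$ whose signature matches $(\alpha(j,i))_{j\neq i}$'') also needs justification that such a vertex exists; this can be done via Helly on the subtrees determined by the values $\alpha(j,i)$, using the triple conditions among $i,j,k$ to verify pairwise intersection, but you have not written this out.

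By contrast, the paper sidesteps all of this combinatorics: it takes an arbitrary polarization $J$, passes to its Alexander dual $I$ (codimension-two Cohen--Macaulay with $m$-linear resolution), and invokes \cite[Prop.~2.4]{Fl09}, which classifies exactly such ideals as degenerations of the $I(T)$. A degree count shows the degeneration is an isomorphism. Your route is more self-contained but requires you to actually prove the tree-reconstruction combinatorics; the paper's route is shorter but imports a structural result.
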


\begin{example} \label{ex:degto-fire} The two trees in Figure \ref{fig:degto-fire} give the non-isomorphic polarizations of $(x_1,x_2,x_3)^2$.
\begin{figure}
\begin{tikzpicture}
\draw[directed] (0,0)--(0,1) node[anchor=west] at (0,0.7){$1$};
\draw[directed] (0,0)--(0.75,-0.5) node[anchor=north] at (0.5,-0.33)  {$2$};
\draw[directed] (0,0)--(-0.75,-0.5) node[anchor=south] at (-0.5,-0.33)  {$3$};
\filldraw (0,0) circle (2pt);
\filldraw (0,1) circle (2pt);
\filldraw (0.75,-0.5) circle (2pt);
\filldraw (-0.75,-0.5) circle (2pt);

\draw[directed] (4,0)--(3,0) node[anchor=north] at (3.5,0){$1$};
\draw[directed] (5,0)--(4,0) node[anchor=north] at (4.5,0)  {$2$};
\draw[directed] (6,0)--(5,0) node[anchor=north] at (5.5,0)  {$3$};
\filldraw (3,0) circle (2pt);
\filldraw (4,0) circle (2pt);
\filldraw (5,0) circle (2pt);
\filldraw (6,0) circle (2pt);

\end{tikzpicture}
    \caption{}
    \label{fig:degto-fire}
\end{figure}
The first tree gives the standard polarization
\[ (x_{11}x_{12}, x_{11}x_{21}, x_{21}x_{22}, x_{11}x_{31}, x_{21}x_{31},
    x_{31}x_{32}). \] The second tree gives the
b-polarization
\[ (x_{11}x_{12}, x_{11}x_{22}, x_{21}x_{22}, x_{11}x_{32}, x_{21}x_{32},
    x_{31}x_{32}), \]
    which is the letterplace ideal $L(2,3)$ of \cite{FGH17}.
\end{example}

\begin{example} \label{ex:degto-five}
The trees with five vertices, Figure \ref{fig:degto-five}, decorated with direction and labelling, give the three non-isomorphic polarizations of 
$(x_1,x_2,x_3,x_4)^2$.

\begin{figure}
\begin{tikzpicture}
\draw[directed] (0,0)--(0,1) node[anchor=west] at (0,0.7){$1$};
\draw[directed] (0,0)--(1,0) node[anchor=north] at (0.7,0)  {$2$};
\draw[directed] (0,0)--(0,-1) node[anchor=east] at (0,-0.7)  {$3$};
\draw[directed] (0,0)--(-1,0) node[anchor=south] at (-0.7,0)  {$4$};
\filldraw (0,0) circle (2pt);
\filldraw (0,1) circle (2pt);
\filldraw (1,0) circle (2pt);
\filldraw (0,-1) circle (2pt);
\filldraw (-1,0) circle (2pt);

\draw[directed] (3,0)--(2.5,0.75) node[anchor=west] at (2.67,0.5){$1$};
\draw[directed] (3,0)--(2.5,-0.75) node[anchor=west] at (2.67,-0.5)  {$2$};
\draw[directed] (4,0)--(3,0) node[anchor=north] at (3.5,0)  {$3$};
\draw[directed] (5,0)--(4,0) node[anchor=north] at (4.5,0)  {$4$};
\filldraw (3,0) circle (2pt);
\filldraw (2.5,0.75) circle (2pt);
\filldraw (2.5,-0.75) circle (2pt);
\filldraw (4,0) circle (2pt);
\filldraw (5,0) circle (2pt);

\draw[directed] (7.5,0)--(6.5,0) node[anchor=north] at (7,0){$1$};
\draw[directed] (8.5,0)--(7.5,0) node[anchor=north] at (8,0)  {$2$};
\draw[directed] (9.5,0)--(8.5,0) node[anchor=north] at (9,0)  {$3$};
\draw[directed] (10.5,0)--(9.5,0) node[anchor=north] at (10,0)  {$4$};
\filldraw (6.5,0) circle (2pt);
\filldraw (7.5,0) circle (2pt);
\filldraw (8.5,0) circle (2pt);
\filldraw (9.5,0) circle (2pt);
\filldraw (10.5,0) circle (2pt);
\end{tikzpicture}
    \caption{}
    \label{fig:degto-five}
\end{figure}

The first tree gives the standard polarization. 
The second tree gives the polarization:
\[(x_{11}x_{12},
x_{21}x_{22},
x_{31}x_{32},
x_{41}x_{42},
x_{11}x_{21},
x_{11}x_{32},
x_{11}x_{42},
x_{21}x_{32},
x_{21}x_{42},
x_{31}x_{42})
\]
The third tree gives the b-polarization:
   \[
(x_{11}x_{12},
x_{21}x_{22},
x_{31}x_{32},
x_{41}x_{42},
x_{11}x_{22},
x_{11}x_{32},
x_{11}x_{42},
x_{21}x_{32},
x_{21}x_{42},
x_{31}x_{42}),
\] 
which is the letterplace ideal $L(2,4)$.

\end{example}
\begin{proof}[Proof of Theorem \ref{thm:degto-T}] 
a. Consider the generator $m_{v,w} = x_{l(e),e_{to}(w)} x_{l(f),f_{to}(v)}$ of $J(T)$. 
Recall that $e$ is incident to $v$. 
Let $u$ be another vertex of $T$.
If the first variable of $m_{v,w}$ is not in the monomial $m_u$ then
$w$ and $u$ are in distinct directions from $v$. If the second
variable is not in $m_u$, then $v$ and $u$ are in distinct directions from $w$, but these two situations together are not possible in a tree.

Hence all the $\binom{m+1}{2}$ monomials $m_{v,w}$ are in the Alexander dual of
$I(T)$. But $I(T)$ has $m$-linear resolution and is Cohen-Macaulay of codimension two
by \cite{Fl09}.
Such an ideal is easily seen to have multiplicity 
$\binom{m+1}{2}$.
For a squarefree monomial ideal $I$ in a polynomial ring $S$,  the multiplicity of $I$ (or 
really of $S/I$) equals the number of the facets of maximal dimension of $\Delta(I)$,
see Section 6.11 and equation (6.4) in \cite{HH11}.
Since $I(T)$ is Cohen-Macaulay all of its facets have the same dimension. 
So there are $\binom{m+1}{2}$ facets. Since $I(T)$ has codimension two
the facets have cardinality $2m-2$. The Alexander dual 
of $I(T)$ is then generated by $\binom{m+1}{2}$
quadratic monomials by the standard correspondence between facets of 
a squarefree monomial ideal and the generators of its Alexander dual,
see Lemma 1.5.4 and Corollary 1.5.5 of \cite{HH11}.
So these generators must be precisely the generators of $J(T)$ making it the Alexander dual.

b. The ideal $J(T)$ has codimension $m$, since its Alexander dual $I(T)$ has 
generators of degree $m$, loc.cit.
Dividing out by the variable differences $x_{i0}- x_{i1}$ we easily see that we get the ideal $(x_1, \ldots, x_m)^2$
which also has codimension $m$. Hence this sequence of variable 
differences is a regular sequence for $k[\Xv_1, \ldots, \Xv_m]/J(T)$, and so $J(T)$ is a polarization of 
$(x_1,\ldots,x_m)^2.$

   Conversely let $J$ be a polarization of $(x_1, \ldots, x_m)^2$.
   Then $J$ is Cohen-Macaulay of codimension $m$ with $2$-linear resolution.
   The Alexander dual $I$ will then be Cohen-Macaulay of codimension $2$ with $m$-linear resolution, \cite{ER98}.
   By \cite[Prop.2.4]{Fl09} there is a tree $T$ such that
   that $I$ is the image of the $I(T)$ by a map of 
   polynomial rings $k[\Xv_1, \ldots, \Xv_m]$ (the ring where $I(T)$ lives) to  $k[\Xv_1, \ldots, \Xv_m]$ (the ring where $I$ lives),
   sending the variables in the former ring to monomials in the latter ring. But since the generators of $I$ and $I(T)$ have
   the same degree, we must map the variables to variables. 
   Hence $I$ is isomorphic to $I(T)$ as a monomial ideal.

c. If the underlying trees are the same then clearly $T$ and $T^\prime$ are related by the action of an element of $S_m \ltimes 
(\hele_2)^m$, and so also $J(T)$ and $J(T^\prime)$. If the underlying trees are different, then $I(T)$ 
and $I(T^\prime)$ are not isomorphic: Their linear syzygies are given precisely by the edges of the trees. 
Any isomorphism between
$I(T)$ and $I(T^\prime)$ would induce a bijection of monomial generators,
corresponding to a bijection of vertices of the trees, such that the corresponding linear syzygies between generators, would correspond
to a bijection between the edges of the trees.
\end{proof}

Together with the discussion in Subsection \ref{subsec:conj-ball} the following
shows that polarizations of $(x_1, \ldots, x_m)^2$ define simplicial balls.
For the notion of linear quotients consult \cite[Section 8.2.1]{HH11}.

\begin{proposition} The ideal $I(T)$ has linear quotients, and so equivalently
(\cite[Prop.8.2.5]{HH11})
the Alexander dual $J(T)$ defines a shellable simplicial complex.
\end{proposition}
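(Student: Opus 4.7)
The plan is to exhibit an explicit linear quotients order on the generators $m_v$ of $I(T)$. First I would choose an ordering $v_1, v_2, \ldots, v_{m+1}$ of the vertices of $T$ such that every initial segment $\{v_1, \ldots, v_k\}$ induces a connected subtree of $T$. Such an ordering exists for any tree: pick $v_1$ arbitrarily and then repeatedly append any vertex adjacent to the set already chosen. With this ordering, for each $k \geq 2$ there is, because $T$ is acyclic, a \emph{unique} vertex $u_k \in \{v_1, \ldots, v_{k-1}\}$ adjacent to $v_k$; write $e_k$ for the edge $\{u_k, v_k\}$.

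The key computational step is to identify the quotient $m_{v_j}/\gcd(m_{v_j}, m_{v_k})$ for arbitrary $j < k$. Here I would use the basic fact that removing an edge $e'$ of $T$ separates it into two connected pieces, and that two vertices lie in the same piece if and only if $e'$ is \emph{not} on the unique path between them. Consequently, if $e'$ is an edge of $T$ not on the path from $v_j$ to $v_k$, then $e'_{to}(v_j) = e'_{to}(v_k)$, so the variable $x_{l(e'), e'_{to}(v_j)}$ appears in both $m_{v_j}$ and $m_{v_k}$ and cancels. Hence
\[
\frac{m_{v_j}}{\gcd(m_{v_j}, m_{v_k})} = \prod_{e \text{ on the path } v_j \to v_k} x_{l(e), e_{to}(v_j)}.
\]

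Now I would exploit the chosen ordering: since $\{v_1, \ldots, v_{k-1}\}$ is connected and $v_k$ attaches to it only through $u_k$, the path in $T$ from $v_j$ to $v_k$ must pass through $u_k$ and end with the edge $e_k$. As $v_j$ and $u_k$ lie in the same component of $T - e_k$, we have $(e_k)_{to}(v_j) = (e_k)_{to}(u_k)$, so the variable $x_{l(e_k), (e_k)_{to}(u_k)}$ divides $m_{v_j}/\gcd(m_{v_j}, m_{v_k})$. In particular this divisor equals $m_{u_k}/\gcd(m_{u_k}, m_{v_k})$, so the colon ideal $(m_{v_1}, \ldots, m_{v_{k-1}}) : m_{v_k}$ is generated by the single variable $x_{l(e_k), (e_k)_{to}(u_k)}$. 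This establishes linear quotients.

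I do not expect a serious obstacle: the only point requiring a moment's care is the reduction to edges on the path $v_j \to v_k$, which is a direct consequence of the tree structure. The resulting colon ideals are in fact principal and generated by a single variable, which is stronger than linear quotients demands; combined with \cite[Prop.8.2.5]{HH11}, this yields the shellability of the simplicial complex associated to $J(T)$.
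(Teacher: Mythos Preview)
Your proof is correct and follows essentially the same approach as the paper. The paper roots $T$ at a vertex $r$, orients edges away from $r$, and takes a linear extension of the resulting partial order; this is exactly your ``connected initial segment'' ordering with $v_1 = r$, and your $u_k$ is the parent of $v_k$ in the rooted tree. Both arguments then reduce to the observation that for any $j<k$ the path from $v_k$ to $v_j$ begins with the edge $e_k$ to the parent $u_k$, so the single variable $x_{l(e_k),(e_k)_{to}(u_k)}$ lies in the colon ideal and divides every generator $m_{v_j}/\gcd(m_{v_j},m_{v_k})$. Your write-up makes explicit that the colon ideal is in fact principal, which the paper's argument implies but does not state.
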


\begin{proof}
Let  $r$ be any vertex in $T$ and orient all the arrows away from $r$. This gives
a partial order on the vertices of the tree. Take a linear extension of this
partial order and let $I$ be generated by the monomials in an initial segment
for this total order. Let $m_u$ be the subsequent monomial. We show that $I:m_u$ is 
generated by variables. So let $m \in I:m_u$. Then $m \cdot m_u$ is divisible by
some $m_v \in I$. Each of $m_u$ and $m_v$ have one variable for each edge in $T$. Only the
edges on the path between $u$ and $v$ give distinct variables in $m_u$ and $m_v$.
Starting from $u$ let $e$ be the first edge on the path to $v$. Let $w$ be the other end point of $e$.
Since $m_v$ comes before $m_u$ in the total order, we see that $m_w$ must also be in the initial segment
which generates $I$.
Of the variables in $m_u$ and $m_w$, only the $e$-variable is different. Thus
$I:m_u$ contains
the $e$-variable $x_{l(e),e_{to}(w)}$ occurring in $m_w$. But then this $e$-variable is also
in $m_v$ and so this variable divides $m$.
\end{proof}

\section{Alexander Duals} \label{sec:AD}

Recall that for a squarefree monomial ideal $I$ in a polynomial ring $S$, the 
{\it Alexander
dual} ideal $J$ is the monomial ideal in $S$ whose monomials are precisely those
that have nontrivial common divisor with every monomial in $I$, or equivalently, every generator of $I$.

We describe the Alexander dual ideal of any polarization of the power
$(x_1,\ldots, x_m)^n$. The description is a direct construction involving the
isotone maps $X_i, i = 1, \ldots, m$. The proof is reduced to study 
certain baby-versions $\chi_i$ of the $X_i$ which are isotone maps
$\chi_i : \Delta_m(n) \pil \{0 < 1\}$.

\subsection{Statement and examples}
Let $J
\subset k[\Xv_1,\ldots,\Xv_m]$ be a polarization of the ideal $(x_1,\ldots,x_m)^n$ in $k[x_1,\ldots,x_m]$.  For any $\mathbf{a}\in\Delta_m(n-1)$ we have the up-graph $U(\bfa)$,
with vertices $\bfa + e_j$ for $j = 1, \ldots, m$. At the vertex $\bfa + e_j$ we have the
$x_j$-type variables $X_j(\bfa + e_j)$. We take the product of all these variable sets:
\[ \MM(\mathbf{a})= \prod_{j = 1}^m X_j(\bfa + e_j). \]
It consists of monomials $x_{1i_1}x_{2i_2} \cdots x_{mi_m}$ where
$x_{ji_j}$ is in $X_j(\bfa + e_j)$.  
Let $I$ be the ideal generated by the monomials in the union of all the $\MM(\bfa)$ for
$\bfa \in \Delta_m(n-1)$.

\begin{theorem} \label{thm:ADIJ}
  The ideal $I$ is the Alexander dual of $J$. 
\end{theorem}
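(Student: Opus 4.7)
The plan is to prove $I = J^\vee$ by establishing both containments. The central device is to pass from each isotone map $X_j \colon \Delta_m(n) \to B(\Xv_j)$ to the family of $\{0<1\}$-valued baby-maps $\chi_{j,k} \colon \Delta_m(n) \to \{0<1\}$ defined by $\chi_{j,k}(\bfu) = 1$ iff $x_{j,k} \in X_j(\bfu)$; each $\chi_{j,k}$ is isotone for $\geq_j$. For a fixed rainbow monomial $\mathbf{n} = x_{1,i_1}\cdots x_{m,i_m}$, write $\chi_j := \chi_{j,i_j}$. Then $\mathbf{n} \in J^\vee$ precisely when for every $\bfb \in \Delta_m(n)$ some $\chi_j(\bfb) = 1$, while $\mathbf{n} \in \MM(\bfa)$ precisely when $\chi_j(\bfa + e_j) = 1$ for all $j$. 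Since by Proposition \ref{pro:conjRainbow} the minimal generators of $J^\vee$ are rainbow monomials, proving the equivalence of these two conditions for each rainbow $\mathbf{n}$ suffices.

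For the forward inclusion $I \subseteq J^\vee$, fix $\mathbf{n} \in \MM(\bfa)$ and arbitrary $\bfb \in \Delta_m(n)$. Choose an index $k$ with $b_k \geq a_k + 1$ (guaranteed by $\sum b_i = \sum a_i + 1$), and apply Proposition \ref{pro:LinsyzPath} to obtain an LS-path $\bfa + e_k = \bfu_0, \bfu_1, \dotsc, \bfu_N = \bfb$. Across each LS-edge only two of the $X_j$'s change (by \eqref{eq:LS-Xp}), so only the two corresponding baby-maps $\chi_p$ can change; combined with the isotonicity of each $X_j$ and the divisibility $m(\bfu_s) \mid \lcm(m(\bfa+e_k), m(\bfb))$ along the path, a careful induction on the path length shows that at least one $\chi_j(\bfb)$ is still $1$, which gives $\mathbf{n} \in J^\vee$.

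For the reverse inclusion $J^\vee \subseteq I$, given a rainbow $\mathbf{n} \in J^\vee$ one must produce $\bfa \in \Delta_m(n-1)$ with $\chi_j(\bfa + e_j) = 1$ for every $j$. Writing $U_j = \chi_j^{-1}(1)$ (an up-set for $\geq_j$), this amounts to showing that $\bigcap_j (U_j - e_j) \subseteq \Delta_m(n-1)$ is nonempty given only that $\bigcup_j U_j = \Delta_m(n)$. An alternative, perhaps cleaner, route is a dimension count: since polarization preserves graded Betti numbers, the number of minimal generators of $J^\vee$ equals $\dim_k k[x_1,\dotsc,x_m]/(x_1,\dotsc,x_m)^n = \binom{n+m-1}{m}$. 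If one can independently enumerate the distinct rainbow monomials in $\bigcup_{\bfa} \MM(\bfa)$ and show this count is also $\binom{n+m-1}{m}$, then the forward inclusion plus matching count forces $I = J^\vee$.

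The main obstacle is making either route to the reverse inclusion rigorous. Producing $\bfa$ explicitly is subtle: the witness is genuinely non-unique, and no naive closed-form operation on $\mathbf{n}$ recovers it (for instance, $a_j = \min_{\bfu \in U_j} u_j - 1$ already fails for non-standard polarizations of $(x_1,x_2,x_3)^2$). The counting route likewise requires a polarization-invariant enumeration of $\bigcup_{\bfa} \MM(\bfa)$, which in turn needs the full LS-spanning-tree condition of Theorem \ref{thm:LS-XD} in order to track overlaps between different $\MM(\bfa)$'s. Either strategy must invoke the polarization hypothesis beyond mere rank-preserving isotonicity of the $X_j$'s, which is exactly where the combinatorial depth of the argument lies.
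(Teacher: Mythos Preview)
Your reduction to the baby-maps $\chi_j$ and to rainbow monomials is exactly the paper's first move: the paper defines $\chi_{i,\bfm}$ for an arbitrary monomial $\bfm$ and then observes that $\bfm \notin J^\vee$ iff the $\chi_{i,\bfm}$ have a full zero point, and $\bfm \notin I$ iff every up-graph has a zero corner. Thus Theorem~\ref{thm:ADIJ} is reduced to a purely combinatorial statement about families $\{\chi_i\}$ satisfying the LS-spanning-tree condition (Theorem~\ref{thm:AD-chi}). So your framework is right.

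However, neither of your two inclusions is actually established, and the forward one has a concrete gap. In your LS-path argument for $I \subseteq J^\vee$ you start at $\bfu_0 = \bfa + e_k$ knowing only $\chi_k(\bfu_0)=1$, and try to propagate ``some $\chi_j = 1$'' along the path. Across an LS-edge $(\bfc;p,q)$ from $\bfc-e_p$ to $\bfc-e_q$, the values $\chi_r$ are preserved for $r\neq p,q$, and isotonicity gives $\chi_p(\bfc-e_q)\geq \chi_p(\bfc-e_p)$; but $\chi_q$ can drop from $1$ to $0$. If at some step the only index with value $1$ is $q$, the induction breaks, and nothing in the divisibility condition $m(\bfu_s)\mid \lcm(m(\bfa+e_k),m(\bfb))$ repairs this (that condition only gives $X_j(\bfu_s)\subseteq X_j(\bfa+e_k)\cup X_j(\bfb)$, which points the wrong way). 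The paper does \emph{not} use LS-paths here: it proves the contrapositive (full zero point $\Rightarrow$ every up-graph has a zero corner) by introducing the subsimplices $\Delta_S(\bfd)$ and running a joint induction on $|S|$ (Lemma~\ref{lem:ADmVarZeroes} and Corollary~\ref{cor:ADPtoU}), using a single LS-edge at each inductive step rather than a whole path.

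For the reverse inclusion you correctly identify the real difficulty. Your counting route is circular in practice: showing $|\bigcup_\bfa \MM(\bfa)| = \binom{n+m-1}{m}$ without first knowing $I = J^\vee$ requires exactly the same combinatorial control over overlaps that a direct proof needs. The paper's argument is constructive but indirect: it works in the same $\Delta_S(\bfd)$ framework and proves ``every up-graph has a zero corner $\Rightarrow$ there is a full zero point'' by induction on the \emph{size} $n-|\bfd|$ and on $|S|$. The key technical step is a ``pulling'' lemma (Lemma~\ref{lem:ADpull}) that moves a near-zero point into a smaller subsimplex while preserving the relevant $\chi$-data; this is the missing idea in your sketch.
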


\begin{example}
Consider again the polarization $J$ from Example \ref{eks:LS-m3}. Its graph of linear syzygies is again given in Figure \ref{fig:AD-xyz}, this time labeling each of the up-triangles in the graph with
$1,\ldots, 6$.
\begin{figure}
\begin{tikzpicture}[]

\draw  (0, 4.33) -- (-2.5,0)--(2.5,0)--(0,4.33) ;

\draw[dashed]  (-1.68, 1.44)--(0,1.44);

\draw (0,1.44)--(1.68, 1.44) ;

\draw[dashed] (-0.85,2.88)--(0,1.44);

\draw (0,1.44)--(0.85,2.88)--(-0.85,2.88) ;

\draw (-1.68,1.44)--(-0.84,0)--(0,1.44);

\draw (0,1.44)--(0.84,0);

\draw[dashed] (0.84,0)--(1.68,1.44);

\draw (-1.5,2.93) node {$x_1x_2y_2$};
\draw (1.5, 2.93) node {$x_1x_2z_2$};
\draw (0.05,4.6) node   {$x_1x_2x_3$};
\draw (-3.3,0) node   {$y_1y_2y_3$};
\draw (3.3,0) node   {$z_1z_2z_3$};
\draw (2.4,1.5) node   {$x_1 z_1z_2$};
\draw (-2.4,1.5) node   {$x_2y_1y_2$};
\draw (-0.84,-0.3) node {$y_1y_2z_2$};
\draw (0.84, -0.3) node {$y_1z_2z_3$};
\draw (0.75, 1.25) node {$x_1y_1z_2$};

\draw (0,3.5) node {1};
\draw (-0.84, 2) node {2};
\draw (0.84, 2) node {3};
\draw (-1.68, 0.5) node {4};
\draw (0,0.5) node {5};
\draw (1.68, 0.5) node {6};

\end{tikzpicture}
\caption{}
\label{fig:AD-xyz}
\end{figure}
The up-triangle $i$ corresponds to an element $a_i \in \Delta_3(2)$, where $a_1 = (2,0,0), a_2 = (1,1,0), a_3 = (1,0,1), a_4 = (0,2,0), a_5 = (0,1,1)$, and $a_6 = (0,0,2)$.
The sets of monomials $\mathcal{M}(\mathbf a_i)$ are:

\begin{align*}
    \mathcal{M}(\mathbf a_1) &= \{\mathbf{x_1y_2z_2, x_2y_2z_2,x_3y_2z_2}\}\\
    \mathcal{M}(\mathbf a_2) &= \{\mathbf{x_1y_1z_2}, x_1y_2z_2,\mathbf{x_2y_1z_2}, x_2y_2z_2\}\\
    \mathcal{M}(\mathbf a_3) &= \{\mathbf{x_1y_1z_1}, x_1y_1z_2,\mathbf{x_2y_1z_1}, x_2y_1z_2\}\\
    \mathcal{M}(\mathbf a_4) &= \{x_2y_1z_2, x_2y_2z_2,\mathbf{x_2y_3z_2}\}\\
    \mathcal{M}(\mathbf a_5) &= \{x_1y_1z_2, \mathbf{x_1y_1z_3},x_1y_2z_2, \mathbf{x_1 y_2 z_3}\}\\
    \mathcal{M}(\mathbf a_6) &= \{x_1y_1z_1, x_1y_1z_2,x_1y_1z_3\}\\
\end{align*}

The boldface monomials are the ten distinct monomials we find from this process, which in fact generate the Alexander dual $I$ of $J$.

\end{example}

We shall go through several steps in proving the above theorem.
It turns out that we will be able to abstract the situation so our
arguments will only involve a collection of isotone maps
\begin{equation} \label{eq:AD-chi} \chi_i : \Delta_m(n) \pil \{ 0 < 1\},
\quad i = 1, \ldots, m
\end{equation}
where $\Delta_m(n)$ has the partial order $\geq_i$, and such that 
$\chi_i(\bfb) = 0$ {\it whenever} $b_i = 0$.

\begin{remark} When $m=3$, it is a curious fact that for each $i$ the number
of maps $\chi_i$ is the Catalan number $C_{n+1}$: Such maps are in one-one 
correspondence with "stacking of coins", \cite[Exercise 6.19 hhh]{St-EC}.
\end{remark}

First we establish some notation.
For a monomial $\mathbf m\in k[\Xv_1, \ldots, \Xv_m]$, define maps
\begin{align*}
\chi_{i,\mathbf{m}}& : \Delta_m(n)\rightarrow \{0 < 1\} \\ 
\mathbf{b} & \mapsto
\begin{cases}
0, & \text{no variable of } X_i(\mathbf b) \text{ is in } \bfm.\\
1, & \text{some variable of } X_i(\mathbf b) \text{ is in } \bfm.
\end{cases}
\end{align*}

Note i) this is an isotone map and ii) if $(\mathbf c; j, k)$ is a linear syzygy edge
for the isotone maps $\{X_i\}$,
then $\chi_{i,\bfm}(\mathbf c-e_j) = \chi_{i,\bfm}(\mathbf c-e_k)$ for every $i\neq j,k$.
Note furthermore:
\begin{itemize}
\item That a monomial $\mathbf m \in k[\Xv_1,\ldots,\Xv_m]$ is in the Alexander dual of $J$
means that it has a common variable with every $m(\bfb)$, or for every $\bfb \in \Delta_m(n)$,
a common variable with some $m_i(\bfb)$ for $i = 1, \ldots, m$.
This holds if and
only if for every such $\bfb$, there is some $i$  with $\chi_{i,\bfm}(\bfb) = 1$.
\item The  monomial $\bfm$ is in $I$ if and only if for some $\bfa \in \Delta_m(n-1)$,
it has a common variable with every  $X_j(\bfa + e_j)$ for $j = 1, \ldots, m$.
Thus for such an $\bfa$ we have $\chi_{j,\bfm}(\bfa + e_j) = 1$ for every $j$.
\end{itemize}

We now abstract the negations of the above.
Consider for $i = 1, \ldots, m$ isotone maps as in \eqref{eq:AD-chi}.

\begin{definition} A multidegree $\mathbf{b}\in\Delta_m(n)$ is a \textit{full zero point} for the collection $\{\chi_i \}$ if $\chi_{i}(\mathbf{b}) = 0$ for every 
    $i = 1, \ldots, m$. 
An up-simplex $U(\mathbf a)$ of $\Delta_m(n)$ {\it has a zero corner} if $\chi_{i}(\mathbf a + e_i) = 0$ for some $i$.
   
   An edge $(\bfc;i,j)$ of $\Delta_m(n)$ is a {\it linear syzygy edge} for the collection $\{ \chi_i \}$  
   if $\chi_p(\bfc -e_i) = \chi_p(\bfc - e_j)$
   for every $p \neq i,j$. 
   \end{definition}

We prove the following.

\begin{theorem} \label{thm:AD-chi} Given the collection of isotone maps $\{ \chi_i \}$ such that for every down-graph of $\Delta_m(n)$
the linear syzygy edges for $\{ \chi_i \}$ contains a spanning tree. Then $\{ \chi_i \}$ has a full zero-point in $\Delta_m(n)$ if and only if 
every up-graph of $\Delta_m(n)$ has a zero corner.
\end{theorem}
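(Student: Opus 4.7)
The plan is to introduce a $\chi$-analog of Proposition~\ref{pro:LinsyzPath} and then handle each direction of the biconditional separately. First I would verify that the conclusion of Proposition~\ref{pro:LinsyzPath} transfers verbatim to the abstract setting: under the LS spanning-tree hypothesis on the $\{\chi_i\}$, for any $\bfa, \bfb \in \Delta_m(n)$ there is an LS path $\bfa = \bfv_0, \bfv_1, \ldots, \bfv_N = \bfb$ whose intermediate vertices satisfy $\bfv_r \leq \bfa \vee \bfb$ and, crucially, $\chi_i(\bfv_r) \leq \chi_i(\bfa) \vee \chi_i(\bfb)$ for every $i$. The proof is a word-for-word translation of the proof of Proposition~\ref{pro:LinsyzPath}, with ``every $x_i$-variable of $m(\bfu)$ lies in $X_i(\bfa) \cup X_i(\bfb)$'' replaced by ``$\chi_i(\bfu) \leq \chi_i(\bfa) \vee \chi_i(\bfb)$''; only the isotonicity of each $\chi_i$ and the LS spanning-tree hypothesis are used. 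An immediate corollary is that the set $T \subseteq \Delta_m(n)$ of full zero points is LS-path connected, since any LS path between two full-zero points stays in $T$.

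For the direction \emph{full zero $\Rightarrow$ every up-graph has a zero corner}: let $\bfb$ be a full zero point and $\bfa \in \Delta_m(n-1)$, and put $J = J(\bfa, \bfb) = \{k : b_k > a_k\}$. Since $\sum_k(b_k - a_k) = 1$, $J$ is non-empty, and I induct on $|J|$. When $|J| = 1$ with $J = \{i\}$, the coordinate inequalities yield $\bfb \geq_i \bfa + e_i$, so isotonicity gives $\chi_i(\bfa + e_i) \leq \chi_i(\bfb) = 0$, the required zero corner. When $|J| \geq 2$, the set $K = \{k : b_k < a_k\}$ is non-empty by a sum count; pick $l \in K$, set $\bfc = \bfb + e_l \in \Delta_m(n+1)$, and choose $p \in J$ with minimum excess $b_p = a_p + 1$ (using LS-path connectivity of $T$ to move $\bfb$ within $T$ if needed). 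Applying Lemma~\ref{lem:lin-R} with $R = \{l, p, p'\}$ for each $p' \in \supp(\bfc) \setminus \{l, p\}$ forces $\chi_{p'}(\bfc - e_p) = \chi_{p'}(\bfb) = 0$ from the spanning tree of $R$-LS edges; combined with the automatic $\chi_p(\bfc - e_p) = 0$ (by isotonicity, since $\bfb \geq_p \bfc - e_p$) and a separate $R$-LS argument for $\chi_l$, this exhibits a new full zero point $\bfb' = \bfc - e_p$ with $|J(\bfa, \bfb')| = |J|-1$, and the induction closes.

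For the direction \emph{every up-graph has a zero corner $\Rightarrow$ full zero}: I argue by contrapositive. Assume $N(\bfb) = \{i : \chi_i(\bfb) = 1\}$ is non-empty for every $\bfb \in \Delta_m(n)$, and pick a label $g(\bfb) \in N(\bfb)$. Since $\chi_i(\bfb) = 0$ when $b_i = 0$, we have $g(\bfb) \neq i$ whenever $b_i = 0$, which is precisely the Sperner boundary condition for the standard triangulation of the $(m-1)$-simplex with lattice vertices $\Delta_m(n)$. Sperner's lemma produces a fully-labeled maximal simplex; using the LS spanning tree in down-graphs together with Lemma~\ref{lem:lin-R}, one exhibits via a sequence of $R$-LS label-swaps a fully-labeled up-graph $U(\bfa)$ with the identity labeling $g(\bfa + e_i) = i$. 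This gives $\chi_i(\bfa + e_i) = 1$ for all $i$, contradicting the assumption that $U(\bfa)$ has a zero corner.

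The main obstacle is the reverse direction: Sperner's lemma alone only yields \emph{some} fully-labeled maximal simplex, which may be a down-simplex or an up-simplex with a non-identity labeling. The LS hypothesis is essential here --- the small cases tabulated earlier in the paper exhibit configurations where every up-graph has a zero corner yet no full zero point exists whenever the LS condition fails --- and the delicate combinatorial core of the proof is the re-routing argument using $R$-LS equivalences to convert any fully-labeled simplex into an identity-labeled up-graph.
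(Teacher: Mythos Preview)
Your proposal has genuine gaps in both directions.

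\textbf{Forward direction.} The step producing a new full zero point $\bfb'=\bfc-e_p=\bfb+e_l-e_p$ does not work as written. With $R=\{l,p,p'\}$, among the three edges of $D_R(\bfc)$ only $(\bfc;l,p)$ constrains $\chi_{p'}$; a spanning tree consisting of $(\bfc;l,p')$ and $(\bfc;p,p')$ gives information about $\chi_p$ and $\chi_l$ only, so Lemma~\ref{lem:lin-R} does not force $\chi_{p'}(\bfb')=\chi_{p'}(\bfb)$. Worse, $\bfb'\geq_l\bfb$, so isotonicity gives no upper bound on $\chi_l(\bfb')$, and no LS condition controls $\chi_l$ along the edge $(\bfc;l,p)$; your ``separate $R$-LS argument for $\chi_l$'' is not supplied and cannot be, so $\bfb'$ need not be a full zero. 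The paper sidesteps this by \emph{not} trying to preserve full-zero-ness. It fixes a single index $p$ with $a_p\geq b_p$ and maintains only the weaker invariant ``$\chi_i(\bfb)=0$ for $i\neq p$'': from the spanning tree in $D(\bfb+e_p)$ one gets some LS edge $(\bfb+e_p;p,r)$, and then $\bfb':=\bfb+e_p-e_r$ again satisfies the invariant (LS for $j\neq p,r$, isotonicity for $j=r$), with $b_p'$ one closer to $a_p$. Once $b_p=a_p$ one restricts to the facet $S\setminus\{p\}$ and inducts on $|S|$.

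\textbf{Reverse direction.} The Sperner set-up is legitimate and the boundary condition is correct, but the step you yourself flag as ``the delicate combinatorial core''---converting an arbitrary fully-labeled maximal simplex into an up-simplex carrying the identity labeling---is simply not carried out. For $m\geq 4$ the standard lattice triangulation of $n\Delta_{m-1}$ has many combinatorial types of maximal cells, and a fully-labeled cell may realize any permutation of labels; an LS spanning tree in a single down-graph does not obviously let you swap labels across cell types. Absent this argument you have only a heuristic. The paper's proof is entirely different: it runs a double induction on $|S|$ and on the size $n-|\bfd|$. A full zero point in the smaller $\Delta_S(\bfd+e_1)$ either already serves in $\Delta_S(\bfd)$, or has $\chi_1=1$; in the latter case a ``pulling'' lemma (Lemma~\ref{lem:ADpull}) moves that point, preserving $\chi_1=1$ and $\chi_i=0$ for $i\neq 1$, towards any prescribed up-graph and shows that the facet $\Delta_{S\setminus\{1\}}(\bfd)$ inherits the zero-corner property, whence a full zero there by the induction on $|S|$.
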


As a consequence we get Theorem \ref{thm:ADIJ}.

\begin{proof}[Proof of Theorem \ref{thm:ADIJ}]
That $\bfm \notin I$ means that every up-graph in $\Delta_m(n)$ has a zero corner for the $\chi_{i,\bfm}$'s.
That $\bfm$ is not in the Alexander dual of $J$ means that it has a full zero-point
for the $\chi_{i,\bfm}$'s. Hence by Theorem \ref{thm:AD-chi}
$I$ will be the Alexander dual of $J$.
\end{proof}

\subsection{Definitions and key lemma} \label{sec:ADDefKey}
In order to facilitate our arguments we need to have a more flexible framework
to work in.
For $\bfd \in \NN^m$ with $|\bfd| \leq n$ and $S \sus [m]$,
let $\Delta_S(n,\bfd)$ be the induced subgraph of $\Delta_m(n)$
whose vertices are the degrees
$\bb \geq \bfd$ such that
$\supp(\bb - \bfd) \sus S$.
This means:
\begin{equation*}
i)\,\, |\bfb| = n, \quad
ii) \,\, b_i = d_i \text{ for } j \in [m] \setminus S, \quad
iii) \,\, b_i \geq d_i \text{ for } i \in S.
\end{equation*}
(We omit $m$ in $\Delta_S(n,\bfd)$ since $m$ is fixed througout.) 
We normally write $\Delta_S(\bfd)$ for $\Delta_S(n,\bfd)$ but on a
few occasions we want $(n-1)$ or $(n+1)$ instead of $n$ as argument, and use the full notation.

The convex hull of $\Delta_S(\bfd)$ in $\RR^S$ is a simplex of dimension
$|S|-1$ if $|\bfd| < n$. The {\it size} of this simplex is $n-|\bfd|$.
For $\bfd = {\mathbf 0}$, the
zero degree, we have $\Delta_{[m]}({\mathbf 0}) = \Delta_m(n)$ as defined earlier.
Note that $\Delta_{S^\prime}(\bfd^\prime)$ is a non-empty subset of
$\Delta_S(\bfd)$ iff:
\begin{equation*}
i)\,\, S^\prime \sus S, \quad ii) \,\, \bfd^\prime \geq \bfd, \quad
iii) \,\, d^\prime_i = d_i  \text{ for } i \in [m] \setminus S.
\end{equation*}

\begin{example}\label{ex: subgraph} Let $S = \{2,3,4\}$ and $\bfd = (1,0,0,0)$. Then $\Delta_S(\bfd)$ is the induced subgraph of $\Delta_4(3)$ depicted in Figure \ref{fig:exSubgraph}; its convex hull is a simplex of dimension $2$, and its size is $2$. If $S^\prime = \{2,4\}$, then $\Delta_{S^\prime}(\bfd)$ is the subgraph of $\Delta_S(\bfd)$ depicted by the thick line in Figure \ref{fig:exSubgraph}. Its convex hull is a simplex of dimension $1$, and its size is also $2$.
\begin{figure}
\begin{tikzpicture}
\draw (-2,1.6)--(2,1.6);
\draw (-1,3.2)--(1,3.2);
\draw (-2,1.6)--(0,4.8);
\draw[very thick] (2,1.6)--(0,4.8) ;
\draw (0,1.6)--(1,3.2);
\draw (0,1.6)--(-1,3.2);
\filldraw[black] (0,4.8) circle (2pt)  node[anchor=south] at (0,4.9){(1,2,0,0)};
\filldraw[black] (-2,1.6) circle (2pt)  node[anchor=east] at (-2.1,1.6){(1,0,2,0)};
\filldraw[black] (2,1.6) circle (2pt)  node[anchor=west] at (2.1,1.6){(1,0,0,2)};
\end{tikzpicture}
\caption{}
\label{fig:exSubgraph}
\end{figure}
\end{example}

\begin{definition}
For $\aa \in \Delta_S(n-1,\bfd)$ we get an induced subgraph 
$U_S(\bfa;\bfd)$ of $\Delta_S(\bfd)$
with vertices $\{ \aa + e_i \, | \, i \in S \}$.
This is a complete graph on $|S|$ vertices whose convex hull is of dimension $|S|-1$.
The graph $U_S(\aa;\bfd)$ is an {\it up-graph}.
The $i \in S$ are the {\it corners} of the up-graph.

For $\cc \in \Delta_S(n+1,\bfd)$ we get an induced subgraph 
$D_S(\bfc;\bfd)$ of $\Delta_S(\bfd)$
with vertices $\{ \cc - e_i \, | \,  i \in \supp(\cc - \bfd) \}$.
This is a complete graph on $|\supp(\bfc-\bfd)|$
vertices whose convex hull is a simplex of dimension $|\supp(\cc - \bfd)|-1$. 
The graph $D_S(\bfc;\bfd)$ is a {\it down-graph}.
\end{definition}

\medskip

Suppose that for each $j \in S$ we have isotone maps, where we have
given $\Delta_S(\bfd + e_j)$ the $\geq_j$-ordering:
\[ \chi_j : \Delta_S(\bfd + e_j) \pil \{ 0 < 1\} .\]
Note that for $\bfb \in \Delta_S(\bfd)$, the $\chi_j(\bfb)$ are defined precisely 
for $j \in \supp(\bfb - \bfd)$.

\begin{definition}
A vertex $\bfb \in \Delta_S(\bfd)$ is a {\it full zero point} for 
$\{ \chi_i \}_{i \in S}$ if $\chi_i(\bfb) = 0$ for every $i \in \supp(\bfb - \bfd)$.
An up-graph $U_S(\bfa;\bfd)$ {\it has a zero corner} for $\{ \chi_i\}_{i \in S}$ if
$\chi_j(\bfa + e_j) = 0$ for some $j \in S$. 

An edge $(\cc;r,s)$ of $\Delta_S(\bfd)$ (note that then $\{r,s \} \sus S$)
is a {\it linear syzygy edge} for the $\{ \chi_j\}_{j \in S}$ if
\[ \chi_j(\cc - e_r) ) = \chi_j(\cc - e_s), \quad \text{for }
  j \in \supp(\cc - \bfd) \setminus \{r,s \}. \]
\end{definition}


\begin{lemma} \label{lem:ADrestrict} Suppose we have isotone maps
$\{\chi_j \}_{j \in S}$ for $\Delta_S(\bfd)$ such that the
linear syzygy edges in every down-graph of $\Delta_S(\bfd)$
contains a spanning tree.

Let $\Delta_{R}(\bfd^\prime)$ be a non-empty subgraph of $\Delta_S(\bfd)$.
For $j \in R$ let $\uchi_j$ be the restriction of $\chi_j$
to isotone maps associated to $\Delta_R(\bfd^\prime)$.
Then each down-graph of
$\Delta_R(\bfd^\prime)$ contains a spanning tree of linear syzygy edges for the
$\{\uchi_j\}_{j \in R}$. 
\end{lemma}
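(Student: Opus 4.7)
The plan is to mimic the approach of Lemma \ref{lem:lin-R}: exploit a spanning tree in the larger ambient down-graph of $\Delta_S(\bfd)$ and contract subpaths whose interior vertices lie outside the restricted graph. Every down-graph of $\Delta_R(\bfd^\prime)$ has the form $D_R(\bfc^\prime; \bfd^\prime)$ for some $\bfc^\prime \in \Delta_R(n+1, \bfd^\prime)$, with vertex set $\{\bfc^\prime - e_j \mid j \in \supp(\bfc^\prime - \bfd^\prime)\}$. Since $\bfd^\prime \geq \bfd$ and $d^\prime_i = d_i$ for $i \notin S$, the element $\bfc^\prime$ also lies in $\Delta_S(n+1, \bfd)$, giving a larger down-graph $D_S(\bfc^\prime; \bfd)$ whose vertex set $\{\bfc^\prime - e_j \mid j \in \supp(\bfc^\prime - \bfd)\}$ contains that of $D_R(\bfc^\prime; \bfd^\prime)$; the extra coordinates $j \in \supp(\bfc^\prime - \bfd) \setminus \supp(\bfc^\prime - \bfd^\prime)$ are precisely the $j \in S$ with $d^\prime_j > d_j$ and $c^\prime_j = d^\prime_j$.

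By hypothesis, $D_S(\bfc^\prime; \bfd)$ contains a spanning tree $T$ of linear syzygy edges for $\{\chi_j\}_{j \in S}$. To produce a spanning tree in $D_R(\bfc^\prime; \bfd^\prime)$ it suffices to show that any two of its vertices are linked by linear syzygy edges of $\{\uchi_j\}_{j \in R}$. Given $r, s \in \supp(\bfc^\prime - \bfd^\prime)$, I will take the unique path in $T$ from $\bfc^\prime - e_r$ to $\bfc^\prime - e_s$ and decompose it into maximal subpaths
\[
\bfc^\prime - e_{r^\prime} = \bfc^\prime - e_{q_0},\, \bfc^\prime - e_{q_1},\, \ldots,\, \bfc^\prime - e_{q_t} = \bfc^\prime - e_{s^\prime},
\]
where $r^\prime, s^\prime \in \supp(\bfc^\prime - \bfd^\prime)$ and the interior coordinates $q_1, \ldots, q_{t-1}$ lie in $\supp(\bfc^\prime - \bfd) \setminus \supp(\bfc^\prime - \bfd^\prime)$.

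The key claim is that each such subpath certifies $(\bfc^\prime; r^\prime, s^\prime)$ as a linear syzygy edge for $\{\uchi_j\}_{j \in R}$ in $D_R(\bfc^\prime; \bfd^\prime)$. Indeed, for $p \in \supp(\bfc^\prime - \bfd^\prime) \setminus \{r^\prime, s^\prime\}$ one verifies $p \neq q_{i-1}, q_i$ for every $i$: the interior $q_i$'s lie outside $\supp(\bfc^\prime - \bfd^\prime)$ while $p$ lies inside, and $p \neq r^\prime, s^\prime$ by assumption. Each edge of the subpath being a linear syzygy edge for $\{\chi_j\}_{j \in S}$ then forces $\chi_p$ to be constant along the subpath, yielding $\uchi_p(\bfc^\prime - e_{r^\prime}) = \uchi_p(\bfc^\prime - e_{s^\prime})$. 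Thus the subgraph of linear syzygy edges for $\{\uchi_j\}_{j \in R}$ in $D_R(\bfc^\prime; \bfd^\prime)$ is connected, hence contains a spanning tree.

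The main work is really bookkeeping: checking that $\supp(\bfc^\prime - \bfd^\prime) \subseteq \supp(\bfc^\prime - \bfd)$, that the restriction $\uchi_p$ is well-defined at each of the vertices $\bfc^\prime - e_{r^\prime}, \bfc^\prime - e_{s^\prime}$ (which needs $p \in R$ and $(\bfc^\prime - e_{r^\prime})_p \geq d^\prime_p + 1$, both automatic from $p \in \supp(\bfc^\prime - \bfd^\prime) \setminus \{r^\prime\}$), and that the subpath partition behaves correctly at the endpoints (the case $t = 1$ and the case where the original path has no interior ``outside'' vertices). I do not anticipate serious obstacles, since the argument is a direct translation of Lemma \ref{lem:lin-R}, with $\supp(\bfc^\prime - \bfd)$ playing the role of $\supp(\bfc)$ and $\supp(\bfc^\prime - \bfd^\prime)$ playing the role of $R$ there.
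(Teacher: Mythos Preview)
Your proposal is correct and follows essentially the same approach as the paper's own proof, which the paper itself flags as ``essentially the same as for Lemma \ref{lem:lin-R}'': embed $D_R(\bfc^\prime;\bfd^\prime)$ in the larger down-graph $D_S(\bfc^\prime;\bfd)$, take the spanning-tree path between any two restricted vertices, and contract subpaths whose interior indices lie outside the restricted vertex set to obtain linear syzygy edges for $\{\uchi_j\}_{j\in R}$. Your version is in fact slightly more careful than the paper's in one respect: you partition path indices according to membership in $\supp(\bfc^\prime-\bfd^\prime)$ rather than in $R$, which is the correct set (an index $q\in R$ with $c^\prime_q = d^\prime_q > d_q$ would give a vertex $\bfc^\prime - e_q$ of $D_S(\bfc^\prime;\bfd)$ that is not in $D_R(\bfc^\prime;\bfd^\prime)$), though the final displayed conclusion in the paper is stated for $p\in\supp(\bfc-\bfd^\prime)$ anyway.
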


\begin{proof} (This proof is essentially the same as for Lemma \ref{lem:lin-R}.)
Choose a particular down-graph $D_R(\cc;\bfd^\prime)$ of $\Delta_R(\bfd^\prime)$.
Let $\cc - e_r$ and $\cc - e_s$ be vertices of $D_R(\cc;\bfd^\prime)$
(so $r,s \in \supp (\cc - \bfd^\prime)$).
We will show that there is a path from $\cc - e_r$ to $\cc - e_s$
in the graph $D_R(\cc;\bfd^\prime)$ consisting of linear syzygy edges
for the $\{\uchi_j\}_{j \in R}$.

Since $\cc \geq \bfd$, $|\cc| = n+1$ and $c_i = d_i$
for $i$ outside of $S$, we may consider the down graph
$D_S(\cc;\bfd)$ in $\Delta_S(\bfd)$.
We know there is a path in $D_S(\cc;\bfd)$ from $\cc - e_r$
to $\cc - e_s$
consisting of linear syzygy edges for the $\{\chi_j \}_{j \in S}$. 

It may be broken up
  into smaller paths: From $\bfc - e_r = \bfc- e_{r_0}$ to $\bfc-e_{r_1}$,
  from $\bfc - e_{r_1}$ to $\bfc - e_{r_2}$, ...,  from
  $\bfc- e_{r_{p-1}}$ to $\bfc - e_{r_p} = \bfc - e_s$
  where on the path from $\bfc - e_{r_{i-1}}$ to $\bfc - e_{r_i}$
  the only vertices  $\bfc -e_q$ with  $q \in R$ are the end vertices
  $q = r_{i-1}$ and $q = r_i$ while the 
  in between vertices all have $q \in Q = \supp(\cc - \bfd) \setminus R$.
  We claim that each edge from $\bfc - e_{r_{i-1}}$ to $\bfc - e_{r_i}$ is a linear syzygy
  edge for the $\{\uchi_j\}_{j \in R}$. This will prove the lemma.

  Let the path from $\bfc - e_{r_{i-1}}$ to $\bfc - e_{r_i}$ be
  \[ \bfc - e_{r_{i-1}} = \bfc - e_{q_0}, \bfc - e_{q_1}, \ldots, \bfc - e_{q_{t-1}}, \bfc - e_{q_t}  = \bfc - r_{i} \]
  where $q_1, \ldots, q_{t-1}$ are all in $Q$. We must show that
  \begin{equation} \label{eq:lin-Rlin}
    \uchi_p(\bfc - e_{r_{i-1}}) = \uchi_p(\bfc - e_{r_i}) \text{ for }
    p  \in  \supp(\cc - \bfd^\prime) \setminus \{ r_{i-1}, r_i \}.
    \end{equation}
    But since the edges on the path are linear syzygy edges for the
    $\{\chi_j\}_{j \in S}$
    we have
  \[ \chi_p(\bfc - e_{q_{j-1}}) = \chi_p(\bfc - e_{q_j}) \text{ for }
    p \in \supp(\bfc - \bfd) \setminus \{ q_{j-1}, q_j \}.\]
Hence    
  \[ \chi_p(\bfc - e_{r_{i-1}}) = \chi_p(\bfc - e_{r_i}) \text{ for }
    p \in \supp(\bfc - \bfd) \setminus \{ r_{i-1}, q_1, \ldots, q_{t-1},
  r_i\}.\]
Since $\supp(\cc- \bfd^\prime) \sus R$ and $\{q_1, \ldots, q_{t-1} \}
\sus S \setminus R$, these are disjoint sets. Also
$\supp (\bfc - \bfd^\prime) \sus \supp(\cc - \bfd)$. 
Hence
 \[ \chi_p(\bfc - e_{r_{i-1}}) = \chi_p(\bfc - e_{r_{i}}) \text{ for }
    p \in \supp(\bfc - \bfd^\prime) \setminus \{ r_{i-1}, r_i\}.\] 
  This shows \eqref{eq:lin-Rlin}.
  \end{proof}

\subsection{Full zero point implies a zero corner in every up-graph}
We prove this direction of Theorem \ref{thm:AD-chi}.

\begin{lemma}\label{lem:ADmVarZeroes} Let $S \sus [m]$ have cardinality $\geq 2$.
Suppose every down-graph of $\Delta_S(\bfd)$ contains a spanning tree of linear syzygies
for the isotone maps $\{\chi_j \}_{j \in S}$. Let $\bfb \in \Delta_S(\bfd)$ and $p \in S$.
Suppose $\chi_i(\bfb) = 0$ for every $i \in \supp(\bfb - \bfd) \setminus \{p \}$. Then every up-graph
$U_S(\bfa;\bfd)$ in $\Delta_S(\bfd)$ with $a_p \geq b_p$ 
has a zero corner for the $\{\chi_i\}_{i \in S}$.
\end{lemma}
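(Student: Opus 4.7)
The plan is to proceed by induction on the number of vertices of $\Delta_S(\bfd)$, a measure that strictly decreases both when we augment $\bfd$ by an $e_i$ and when we restrict to $\Delta_{S \setminus \{i\}}(\bfd)$. For the base case $|S| = 2$, writing $S = \{p, q\}$, the coordinate identity $\sum_{i \in S}(b_i - a_i) = 1$ combined with $a_p \geq b_p$ forces $a_q + 1 \leq b_q$, so that $\bfa + e_q \leq_q \bfb$ coordinate-wise and isotonicity of $\chi_q$ gives $\chi_q(\bfa + e_q) \leq \chi_q(\bfb) = 0$.

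For the inductive step, take $\bfa \in \Delta_S(n-1,\bfd)$ with $a_p \geq b_p$. First I would check whether some $i \in S$ has both $a_i > d_i$ and $b_i > d_i$. If so, both $\bfa$ and $\bfb$ lie in $\Delta_S(\bfd + e_i)$; Lemma \ref{lem:ADrestrict} carries over the spanning tree condition, the hypothesis on $(\bfb, p)$ persists (since $\supp(\bfb - \bfd - e_i) \setminus \{p\} \subseteq \supp(\bfb - \bfd) \setminus \{p\}$), and $U_S(\bfa;\bfd+e_i)$ shares its vertex set with $U_S(\bfa;\bfd)$, so the inductive hypothesis delivers the zero corner. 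Otherwise, for every $i \in S$ either $a_i = d_i$ or $b_i = d_i$; applied to $i = p$ with $a_p \geq b_p \geq d_p$ this forces $b_p = d_p$, so $\bfb$ becomes a full zero point, $T := \supp(\bfb - \bfd) \subseteq S \setminus \{p\}$, and $a_i = d_i$ for each $i \in T$. If additionally some $i \in S$ satisfies $a_i = b_i = d_i$, I would restrict to $\Delta_{S \setminus \{i\}}(\bfd)$, choose a new $p' \in S \setminus \{i\}$ with $a_{p'} \geq b_{p'}$ (which exists since $\sum_{j \in S \setminus \{i\}}(b_j - a_j) = 1$ and $|S \setminus \{i\}| \geq 2$), and invoke the inductive hypothesis on the smaller $|S|$; the resulting zero corner of $U_{S\setminus\{i\}}(\bfa;\bfd)$ is also a corner of $U_S(\bfa;\bfd)$.

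The remaining configuration is $S = A \sqcup T$ (disjoint union) with $A = \supp(\bfa - \bfd)$ and $p \in A$. When $|T| = 1$, say $T = \{q\}$, one verifies $\bfa + e_q \leq_q \bfb$ directly (using $a_i \geq d_i = b_i$ for $i \in A$ and $a_q + 1 = d_q + 1 \leq b_q$) and concludes as in the base case. The genuine obstacle is $|T| \geq 2$: here I would apply an analogue of Proposition \ref{pro:LinsyzPath} for the $\{\chi_i\}$-setting (its proof rests only on the spanning tree property of down-graphs and transfers verbatim) to produce an LS-path between $\bfb$ and the corner $\bfa + e_q$ for a carefully chosen $q \in T$, with intermediate vertices bounded by $\bfb \vee (\bfa + e_q)$. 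The aim is to propagate $\chi_q(\bfb) = 0$ along the path to $\chi_q(\bfa + e_q) = 0$, using the fact that a full LS-edge $(\bfc;r,s)$ preserves $\chi_k$ for every $k \notin \{r,s\}$. The principal difficulty will be that the restricted spanning tree provided by Lemma \ref{lem:ADrestrict} only gives slice-LS edges, which need not preserve $\chi_q$; I would resolve this with an inner induction driven by the bound $w_q \leq b_q$ on intermediate vertices, either augmenting inside the slice $\{v_q = b_q\}$ (returning to the Case 1 reduction already handled) or forcing sufficient degeneracy to collapse to the $|T| = 1$ base case via repeated applications of Lemma \ref{lem:ADrestrict}.
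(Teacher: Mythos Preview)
Your base case and the first two reduction steps are correct, but the final case $|T| \geq 2$ is a genuine gap. Propagating $\chi_q(\bfb) = 0$ along an LS-path to $\chi_q(\bfa + e_q)$ requires that no edge on the path be of the form $(\bfc;q,s)$ (except at the last step), and Proposition \ref{pro:LinsyzPath} gives no such control. Your proposed fix --- ``inner induction driven by the bound $w_q \leq b_q$'' --- is not a proof: you have not specified what decreases, nor why augmenting inside the slice $\{v_q = b_q\}$ returns you to a case already handled, since in that slice $\bfa$ need not live (indeed $a_q = d_q < b_q$). The attempt to force degeneracy to $|T| = 1$ via Lemma \ref{lem:ADrestrict} is also unsubstantiated; restricting $S$ may remove indices from $A$, not from $T$.

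The paper's argument avoids all of this by moving $\bfb$ rather than trying to reach a fixed corner of the up-graph. It inducts on $a_p - b_p$. For the inductive step one looks at the down-graph $D_S(\bfb + e_p;\bfd)$: the spanning-tree hypothesis gives a linear syzygy edge $(\bfb + e_p;p,r)$ for some $r$, so $\chi_j(\bfb + e_p - e_r) = \chi_j(\bfb) = 0$ for $j \in \supp(\bfb - \bfd) \setminus \{p,r\}$, and isotonicity of $\chi_r$ gives $\chi_r(\bfb + e_p - e_r) = 0$ as well. Thus $\bfb' = \bfb + e_p - e_r$ satisfies the same hypothesis with $b'_p = b_p + 1$. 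The base case $a_p = b_p$ is handled by restricting to $S' = S \setminus \{p\}$ with $\bfd' = \bfd + (b_p - d_p)e_p$: then $\bfb$ is a \emph{full} zero point of $\Delta_{S'}(\bfd')$, and one invokes Corollary \ref{cor:ADPtoU} for the smaller index set $S'$. (Lemma and Corollary are proved in tandem by induction on $|S|$.) This single-step push along one LS-edge from $D_S(\bfb + e_p)$ is the key idea you are missing; it replaces your entire LS-path machinery.
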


\begin{corollary} \label{cor:ADPtoU}
  Suppose the maps $\{ \chi_i \}_{i \in S}$ have a full zero-point in $\Delta_S(\bfd)$. Then
  every up-graph in $\Delta_S(\bfd)$ has a zero corner.
\end{corollary}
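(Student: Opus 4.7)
The plan is to deduce the corollary directly from Lemma \ref{lem:ADmVarZeroes} once we have located, for each up-graph, a suitable coordinate $p \in S$ on which to apply the lemma. Let $\bfb \in \Delta_S(\bfd)$ be the assumed full zero point, so $\chi_i(\bfb) = 0$ for every $i \in \supp(\bfb - \bfd)$. Then the hypothesis of Lemma \ref{lem:ADmVarZeroes} holds for \emph{any} choice of $p \in S$: the lemma requires only that $\chi_i(\bfb) = 0$ for $i \in \supp(\bfb-\bfd)\setminus \{p\}$, and this is automatic since every such $\chi_i$ already vanishes. So the entire content of the proof is: given an arbitrary up-graph $U_S(\bfa;\bfd)$, produce some $p \in S$ with $a_p \geq b_p$, then invoke the lemma to obtain a zero corner.

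The main (minor) obstacle is thus the coordinate-hunting step, which I would carry out by a counting argument. Since $\bfa \in \Delta_S(n-1,\bfd)$ and $\bfb \in \Delta_S(n,\bfd)$ both satisfy $a_i = b_i = d_i$ for $i \in [m]\setminus S$, we have
\[
\sum_{p \in S}(b_p - a_p) \;=\; |\bfb| - |\bfa| \;=\; 1.
\]
If we had $a_p < b_p$ for every $p \in S$, then each summand would be $\geq 1$ and the total would be $\geq |S|$. Assuming $|S| \geq 2$, this contradicts the total being $1$, so at least one $p \in S$ satisfies $a_p \geq b_p$, as required.

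The degenerate case $|S| = 1$ needs a quick separate remark, because Lemma \ref{lem:ADmVarZeroes} is stated for $|S| \geq 2$. Here $\Delta_S(\bfd)$ has the single element $\bfb$, and any up-graph $U_S(\bfa;\bfd)$ consists of the single corner $\bfa + e_j = \bfb$ (where $S = \{j\}$); the full zero point condition then gives $\chi_j(\bfa + e_j) = \chi_j(\bfb) = 0$ directly, which is precisely the zero-corner conclusion. Putting these two cases together yields the corollary with no further work beyond citing Lemma \ref{lem:ADmVarZeroes}.
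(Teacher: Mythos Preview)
Your argument is correct and is essentially the paper's own proof: find $p\in S$ with $a_p\ge b_p$ via the counting $|\bfb|-|\bfa|=1$, then invoke Lemma \ref{lem:ADmVarZeroes}. Your treatment is slightly more explicit (you spell out the sum over $S$ and separately dispose of $|S|=1$, which the paper does not need since Lemma \ref{lem:ADmVarZeroes} and the tandem induction start at $|S|\ge 2$), but the underlying idea is identical.
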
 

 \begin{proof}[Proof Lemma \ref{lem:ADmVarZeroes} and Corollary \ref{cor:ADPtoU}]
 We prove these in tandem by induction on the cardinality $|S|$. We prove Corollary
 \ref{cor:ADPtoU} by assuming Lemma \ref{lem:ADmVarZeroes}. Then we prove Lemma \ref{lem:ADmVarZeroes}
 by assuming Corollary \ref{cor:ADPtoU} has been proven for all $\Delta_{S^\prime}(\bfd)$ 
 when the cardinality $|S^\prime| < |S|$.
 
 \medskip
 Assume we have shown Lemma \ref{lem:ADmVarZeroes}. 
 Let $\bfb$ be a full zero-point for $\Delta_S(\bfd)$ and 
 consider an up-graph $U_S(\bfa;\bfd)$ in $\Delta_S(\bfd)$. Since $|\bfb| = n$ and $|\bfa| = n-1$, 
 then at least for one $p$ we have
   $a_p \geq b_p$. Then Lemma \ref{lem:ADmVarZeroes} implies
  that $U_S(\bfa;\bfd)$ is an up-graph with a zero corner, proving Corollary \ref{cor:ADPtoU}.

\medskip We now show Lemma \ref{lem:ADmVarZeroes}. For simplicity we assume $p = 1$.
First we do the case $|S| = 2$, 
say $S = \{1,2 \}$. Then $\chi_2(\bfb) = 0$. By isotonicity of
$\chi_2$ we have $\chi_2(\bfb + \lambda e_1 - \lambda e_2) = 0$ for $\lambda \geq 0$.
But letting $\lambda = a_1 - b_1$, the point  $\bfb + \lambda e_1 - \lambda e_2 = \bfa + e_2$ is a zero corner
for $U_S(\bfa;\bfd)$.

\medskip Assume now $|S| \geq 3$ and Corollary \ref{cor:ADPtoU} holds for $S^\prime$ with
$2 \leq |S^\prime| < |S|$. We argue by induction on the difference $a_1 - b_1$.

\medskip 
\noindent {\bf Case $a_1 = b_1$.} Let $S^\prime = S\setminus \{1 \}$ and $\bfd^\prime = 
\bfd + (b_1 - d_1)e_1$. Then 
$\bfb \in \Delta_{S^\prime}(\bfd^\prime)$ and $\bfb$ is a full zero point for the $\{ \uchi_i\}_{i \in S^\prime}$. 
By Corollary \ref{cor:ADPtoU} every up-graph $U_{S^\prime}(\bfa;\bfd^\prime)$ in $\Delta_{S^\prime}(\bfd^\prime)$ 
has a zero corner for $\{ \uchi_i\}_{i \in S^\prime}$. Since $\bfa \in \Delta_{S^\prime}(n-1,\bfd^\prime)$
iff $\bfa \in \Delta_S(n-1,\bfd)$ with $a_1 = d^\prime_1 = b_1$, up-graphs $U_{S^\prime}(\bfa;\bfd^\prime)$
in $\Delta_{S^\prime}(\bfd^\prime)$ correspond to up-graphs $U_S(\bfa;\bfd)$ with $a_1 = b_1$.
Then every up-graph $U_S(\bfa;\bfd)$ with $a_1 = b_1$ has a zero corner
for $\{ \chi_i \}_{i \in S}$.

\medskip
\noindent {\bf Case $a_1 > b_1$.}
   Consider the down-graph $D_S(\bfb + e_1)$. By assumption on
   the maps $\chi_i$, at least one edge $(\mathbf b+e_1; 1, r)$ is a linear
   syzygy edge, so $\chi_j(\mathbf b + e_1-e_r) = \chi_j(\mathbf b) = 0$ for every
   $j \in \supp(\bfb - \bfd) \setminus \{1, r\}$. But $\mathbf b + e_1-e_r\leq_r \mathbf b$,
   so $\chi_r(\mathbf b+e_1-e_r) \leq \chi_p(\mathbf b) = 0$.
   Therefore $\chi_j(\mathbf \bfb + e_1-e_r) = 0$
   for every $j \in \supp(\bfb - \bfd) \setminus \{ 1 \}$. 
   Since the difference in first coordinates of $\bfa$ and $\bfb + e_1 - e_r$ 
   is one less than $a_1 - b_1$, by induction the up-graph $U_S(\bfa;\bfd)$ has a zero corner
   for $\{ \chi_j \}_{j \in S}$.
 \end{proof}

\subsection{Every up-graph has a zero corner implies a full zero point}
We now prove the other direction of Theorem \ref{thm:AD-chi}.
We need the following specific lemma. It says that you can ``pull
a point'' with specific properties in a given direction and into a simplex
of smaller size.

\begin{lemma} \label{lem:ADpull} Let $\bfb \in \Delta_S(\bfd)$ with 
$|\bfd| \leq n-1$, $1 \in S$, and $b_1 \leq d_1 + 1$. Suppose 
\[ \chi_i(\bfb) = 0 \text{ for } i \in \supp(\bfb- \bfd) \setminus \{1\},
\quad \chi_1(\bfb) = 1 \text{ if } 1 \in \supp(\bfb - \bfd). \]
Let $p \in S \setminus \{1\}$ and $\bfd^\prime = \bfd + e_p$. 
Then there is some $\bfb^\prime \in \Delta_S(\bfd^\prime)$ with 
$b^\prime_1 \leq d^\prime_1 + 1 (= d_1 + 1)$ such that 
\begin{equation} \label{eq:ADpull}  \chi_i(\bfb^\prime) = 0 \text{ for } i \in \supp(\bfb^\prime- \bfd^\prime) \setminus \{1\},
\quad \chi_1(\bfb^\prime) = 1 \text{ if } 1 \in \supp(\bfb^\prime - \bfd^\prime). 
\end{equation}
\end{lemma}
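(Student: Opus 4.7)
The plan is to split into two cases based on whether $b_p \geq d_p + 1$ or $b_p = d_p$ (note $b_p \geq d_p$ always, since $\bfb \in \Delta_S(\bfd)$).

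In the easy case $b_p \geq d_p + 1$, we already have $\bfb \geq \bfd + e_p = \bfd^\prime$, so $\bfb \in \Delta_S(\bfd^\prime)$ and we simply set $\bfb^\prime = \bfb$. Since $\bfd^\prime \geq \bfd$, the support $\supp(\bfb - \bfd^\prime)$ is contained in $\supp(\bfb - \bfd)$, so the required conditions \eqref{eq:ADpull} are inherited verbatim from those on $\bfb$, and $b^\prime_1 = b_1 \leq d_1 + 1 = d^\prime_1 + 1$.

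The substantive case is $b_p = d_p$, so $p \notin \supp(\bfb - \bfd)$ and we must ``push mass'' into coordinate $p$. Look at $\bfb + e_p \in \Delta_S(n+1, \bfd)$; its support is $\supp(\bfb - \bfd) \cup \{p\}$. By Lemma~\ref{lem:ADrestrict}, the down-graph $D_S(\bfb + e_p; \bfd)$ contains a spanning tree of linear syzygy edges for the (restricted) $\chi_j$'s. The vertex $\bfb = (\bfb+e_p) - e_p$ sits in this down-graph, so the spanning tree produces an LS-edge from $\bfb$ to some vertex $\bfb^\prime := \bfb + e_p - e_r$, with $r \in \supp(\bfb - \bfd)$. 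By construction $b^\prime_p = d_p + 1$, so $\bfb^\prime \in \Delta_S(\bfd^\prime)$.

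The verification that $\bfb^\prime$ satisfies \eqref{eq:ADpull} splits into two subcases. If $r \neq 1$, the LS-edge property $\chi_j(\bfb^\prime) = \chi_j(\bfb)$ for $j \in \supp(\bfb - \bfd) \setminus \{r\}$ covers every $j$ we need except possibly $j=r$; for that we use $\bfb^\prime \leq_r \bfb$ together with isotonicity of $\chi_r$ to get $\chi_r(\bfb^\prime) \leq \chi_r(\bfb) = 0$, and the $\chi_1$ condition transfers directly. The delicate subcase is $r=1$, and this is precisely where the hypothesis $b_1 \leq d_1 + 1$ does its work: we then have $b^\prime_1 = b_1 - 1 \leq d_1 = d^\prime_1$, hence $1 \notin \supp(\bfb^\prime - \bfd^\prime)$, so the condition on $\chi_1(\bfb^\prime)$ becomes vacuous. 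The remaining conditions $\chi_j(\bfb^\prime) = 0$ follow from the LS property, noting that $(\bfb^\prime - \bfd^\prime)_p = b_p - d_p = 0$ means $p$ is also absent from $\supp(\bfb^\prime - \bfd^\prime)$. The main subtlety of the argument is therefore recognizing that the bound $b_1 \leq d_1 + 1$ is exactly calibrated to dispose of the otherwise problematic case where the chosen LS-edge drops the first coordinate.
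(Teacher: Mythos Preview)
Your proof is correct and follows essentially the same route as the paper's: the same case split on $b_p \geq d_p+1$ versus $b_p = d_p$, the same choice of $\bfb' = \bfb + e_p - e_r$ via an LS-edge of the down-graph $D_S(\bfb+e_p;\bfd)$, and the same two subcases $r = 1$ versus $r \neq 1$ for the verification. The only cosmetic difference is that you cite Lemma~\ref{lem:ADrestrict} where the paper simply uses the standing spanning-tree hypothesis on $\Delta_S(\bfd)$ directly; this is harmless over-justification.
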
 
 
 \begin{proof}
If $b_p \geq  d_p +1$ we simply let $\bfb^\prime = \bfb$.
So suppose $b_p = d_p$. Looking at the down-graph $D_S(\bfb + e_p;\bfd)$
there is a linear syzygy edge $(\bfb + e_p;p,q)$ for some $q \in S \setminus \{p\}$.
It goes from $\bfb$ to $\bfb^\prime = \bfb + e_p - e_q$. Let us show that this
$\bfb^\prime$ has the desired properties.

We have $\bfb^\prime - \bfd^\prime = \bfb - \bfd - e_q$ and so
$\supp(\bfb^\prime - \bfd^\prime)$ is $\supp(\bfb - \bfd)$ with $q$ possibly 
removed.
We see that $p \not \in \supp(\bfb^\prime -\bfd^\prime)$,
and so $\chi_i(\bfb^\prime) = \chi_i(\bfb)$ for $i \in \supp(\bfb^\prime - \bfd^\prime) 
\setminus \{q\}.$

\medskip
\noindent {\bf Case $q = 1$.} In this case $b^\prime_1 = d_1$ so $1 \not \in \supp(\bfb^\prime - \bfd^\prime)$
and \eqref{eq:ADpull} holds.

\medskip 
\noindent {\bf Case $ q \neq 1$.} Note $\bfb^\prime \leq_q \bfb$. So if $q$ is contained in $\supp(\bfb^\prime - \bfd^\prime)$ then 
$\chi_q(\bfb^\prime) \leq \chi_q(\bfb) = 0$. Furthermore $\chi_1(\bfb^\prime) 
= \chi_1(\bfb) = 1$ if $1 \in \supp(\bfb^\prime - \bfd^\prime)$.
So again \eqref{eq:ADpull} holds.
\end{proof}

\begin{proposition} Let $|\bfd| \leq n-1$ and $|S| \geq 2$.
Suppose every up-graph in $\Delta_S(\bfd)$ has a zero corner for the $\{\chi_i\}_{i \in S}$. Then
there is an element of $\Delta_S(\bfd)$ which is a full zero-point. 
\end{proposition}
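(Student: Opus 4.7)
I will proceed by induction on the size $N = n - |\bfd|$.

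The base case $N=1$ is direct: $\Delta_S(\bfd)$ is itself the up-graph $U_S(\bfd;\bfd)$, so the hypothesis immediately yields a corner $\bfd+e_j$ with $\chi_j(\bfd+e_j)=0$, a full zero point (its support is $\{j\}$).

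For the inductive step with $N\geq 2$, I would consider, for each $p\in S$, the sub-simplex $\Delta_S(\bfd+e_p)\subset \Delta_S(\bfd)$ of size $N-1$. The zero-corner hypothesis inherits automatically since the relevant up-graphs in the smaller simplex are the same point sets as in the larger one, and the down-graph spanning-tree condition restricts by Lemma~\ref{lem:ADrestrict}. The inductive hypothesis then produces a full zero point $\bfb^{(p)}\in\Delta_S(\bfd+e_p)$ for every $p\in S$. Since $\supp(\bfb^{(p)}-\bfd)\setminus\supp(\bfb^{(p)}-\bfd-e_p)\subseteq\{p\}$---with equality iff $b^{(p)}_p=d_p+1$---the point $\bfb^{(p)}$ is automatically a full zero point in $\Delta_S(\bfd)$ unless $b^{(p)}_p=d_p+1$ and $\chi_p(\bfb^{(p)})=1$. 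If any $\bfb^{(p)}$ escapes this trap, we are done.

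Otherwise, each $\bfb^{(p)}$ is a near-zero point in direction $p$: $b^{(p)}_p=d_p+1$, $\chi_p(\bfb^{(p)})=1$, and $\chi_i(\bfb^{(p)})=0$ for every $i\in\supp(\bfb^{(p)}-\bfd)\setminus\{p\}$. In particular, each such $\bfb^{(p)}$ satisfies exactly the hypothesis of Lemma~\ref{lem:ADpull} with $p$ playing the role of the distinguished coordinate ``$1$''. My plan is to exploit one of these near-zero points by selecting, say, $\bfb^{(1)}$ and iterating Lemma~\ref{lem:ADpull}, pulling in directions $q\neq 1$ until we reach a sub-simplex $\Delta_S(\bfd^{*})$ with $|\bfd^{*}|=n-1$. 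This sub-simplex is an up-graph whose zero corner $\bfd^{*}+e_j$ must satisfy $j\neq 1$, since the invariant $\chi_1(\bfb^{*})=1$ is preserved through every pull while $\chi_1(\bfd^{*}+e_1)=0$ would follow from the zero corner at $j=1$.

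\textbf{The main obstacle} will be promoting the full zero point $\bfd^{*}+e_j$ of $\Delta_S(\bfd^{*})$ to one of the original $\Delta_S(\bfd)$: the difficulty is that $\supp(\bfd^{*}+e_j-\bfd)$ contains all the pulled indices $p_1,\ldots,p_r$, and one needs $\chi_{p_k}(\bfd^{*}+e_j)=0$ for each of them. Closing this will require either strengthening the invariant tracked during the pulling iteration (to guarantee $\chi_p=0$ at each stage after the first pull in direction $p$, which should follow from the linear-syzygy edge chosen in the proof of Lemma~\ref{lem:ADpull}), or running an auxiliary argument showing that the residual case---where every candidate $\bfb^{(p)}$ is trapped---forces a rigid combinatorial structure on the family $(\bfb^{(p)})_{p\in S}$ (each of the form $\bfd+e_p+\sum_{i\in T_p}e_i$ for sets $T_p\subset S\setminus\{p\}$ of size $N-1$) that ultimately violates the linear-syzygy spanning-tree condition on a carefully chosen down-graph $D_S(\bfc;\bfd)$ with $\bfc\in\Delta_S(n+1,\bfd)$.
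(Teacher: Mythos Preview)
Your setup through the inductive step and the identification of the ``trapped'' configuration $\bfb^{(1)}$ with $b^{(1)}_1=d_1+1$, $\chi_1(\bfb^{(1)})=1$ matches the paper exactly. But the obstacle you flag is real and is not closed by either of your proposed fixes.

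Your first fix---tracking $\chi_p=0$ through the pulls---fails already at the first pull in a new direction $p$. In the proof of Lemma~\ref{lem:ADpull}, when $b_p=d_p$ one passes from $\bfb$ to $\bfb'=\bfb+e_p-e_q$ along a linear syzygy edge $(\bfb+e_p;p,q)$. That edge controls $\chi_i$ for $i\neq p,q$, and isotonicity controls $\chi_q$; but $\chi_p(\bfb')$ is genuinely new information (indeed $\chi_p(\bfb)$ is not even defined since $b_p=d_p$), and nothing forces it to be $0$. So after iterating, $\supp(\bfd^*+e_j-\bfd)$ contains pulled indices $p$ for which $\chi_p(\bfd^*+e_j)$ is uncontrolled. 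Your second fix is too speculative to evaluate.

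The paper resolves this by adding a second induction on $|S|$. Instead of trying to manufacture a full zero point directly from the pulled data, it removes the bad coordinate: set $S'=S\setminus\{1\}$ and show that \emph{every} up-graph $U_{S'}(\bfa;\bfd)$ in $\Delta_{S'}(\bfd)$ has a zero corner, then invoke the inductive hypothesis on $|S|$ to obtain a full zero point $\bfb^{\mathbf 0}\in\Delta_{S'}(\bfd)$; since $b^{\mathbf 0}_1=d_1$ this is automatically a full zero point in $\Delta_S(\bfd)$. The pull lemma is used here with a \emph{target}: given an up-graph $U_{S'}(\bfa;\bfd)$, repeatedly pull $\bfb$ in a direction $p\in S'$ with $a_p>b_p$, shrinking the ambient simplex toward $\bfa$, until $a_p\le b_p$ for every $p\in S'$. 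Together with $b_1\le d_1+1=a_1+1$ and $|\bfb|=|\bfa|+1$, this forces $\bfb=\bfa+e_k$ for a single $k$. If $k\neq 1$ then $\chi_k(\bfa+e_k)=0$ by the maintained invariant, so $U_{S'}(\bfa;\bfd)$ has a zero corner. If $k=1$ then $\chi_1(\bfa+e_1)=1$, so the zero-corner hypothesis on $U_S(\bfa;\bfd)$ in the full index set forces $\chi_j(\bfa+e_j)=0$ for some $j\in S'$. Either way $U_{S'}(\bfa;\bfd)$ has a zero corner, and the induction on $|S|$ finishes. The point is that the pulled point is never asked to be a full zero point in $\Delta_S(\bfd)$; it only witnesses a zero corner in a single up-graph of the smaller index set.
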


\begin{proof}
We prove by induction on the size $(n-|\bfd|)$ and cardinality $|S|$ that the above holds.
If $|\bfd| = n-1$, then $\Delta_S(\bfd)$ equals the up-graph $U_S(\bfd;\bfd)$. 
If we have a zero at corner $p \in S$, so $\chi_p(\bfd + e_p) = 0$, then $\bfb = \bfd + e_p$
is a full zero point of $\Delta_S(\bfd)$ since $\supp(\bfb - \bfd) = \{p \}$.

Now pick an element of $S$, say $1 \in S$. By induction on size, $\Delta_S(\bfd + e_1)$ has
a full zero point $\bfb$. If $b_1 > d_1 + 1$ then
$\supp(\bfb - \bfd) = \supp(\bfb -(\bfd + e_1))$ and $\bfb$ is a full zero
point for $\Delta_S(\bfd)$. So suppose $b_1 = d_1 + 1$. If $\chi_1(\bfb) = 0$,
it is a full zero point in $\Delta_S(\bfd)$. Otherwise we have:
\[ \chi_i(\bfb) = 0 \text{ for } i \in \supp(\bfb - \bfd) \setminus \{1\}, \quad
\chi_1(\bfb) = 1.\]

First consider when $S$ has cardinality two, say $S = \{1,2\}$, let $\bfa = \bfb - e_1$ and consider the up-graph $U_S(\bfa;\bfd)$.
Since $\chi_1(\bfa + e_1) = 1$, we must in the other corner of this up-graph have $\chi_2(\bfa + e_2) = 0$.
Since $1$ is not in the support of $(\bfa + e_2) - \bfd$, then $\bfb = \bfa + e_2$ is a full zero point.

\medskip
So let $S$ have cardinality $\geq 3$ and put
$S^\prime = S \setminus \{ 1\}$.
We show now that each up-graph in $\Delta_{S^\prime}(\bfd)$ has a zero corner. By induction 
we then have a full zero $\bfb^\fus$ in $\Delta_{S^\prime}(\bfd)$. Since $1$ is 
not in the support of $\bfb^\fus - \bfd$ this $\bfb^\fus$ is also a full zero point in $\Delta_S(\bfd)$ and we are done.

\medskip 
So let $U_{S^\prime}(\bfa;\bfd)$ be an up-graph in $\Delta_{S^\prime}(\bfd)$. We also have
the up-graph $U_S(\bfa;\bfd)$ in $\Delta_S(\bfd)$. If there is some
$p \geq 2$ such that $a_p > b_p$ we apply Lemma \ref{lem:ADpull} and "pull" the point $\bfb$  to a 
point $\bfb^\prime$ in a smaller sized $\Delta_S(\bfd + e_p)$, which still contains $U_S(\bfa;\bfd)$.
In this way we continue until we have a $\bfb$ with either i) $a_p \leq b_p$ for every $p \geq 2$,  or ii) the size of $\Delta_S(\bfd)$ has become
$1$. But with this size we have $\bfa = \bfd$ and so $a_p \leq b_p$ for every $p \geq 2$ in any case. 
We also have $a_1 = d_1 \leq b_1 \leq d_1 + 1$ and recall that $|\bfb| = n$ and $|\bfa| = n-1$. 

\medskip 
\noindent {{\bf Case $b_1 = a_1$.}} Then $\bfb = \bfa + e_p$ for some $p \geq 2$ and $p \in \supp(\bfb - \bfd)$.
Then  
\[ \chi_p(\bfa + e_p) = \chi_p(\bfb) = 0 \]
and so $U_{S^\prime}(\bfa;\bfd)$ has a zero corner.

\medskip
\noindent {{\bf Case $b_1 = a_1 +1$.}} Then $\bfb = \bfa + e_1$ and $1 \in \supp(\bfb - \bfd)$
so 
\[ \chi_1(\bfa +e_1) = \chi_1(\bfb) = 1. \]
Since $U_S(\bfa;\bfd)$ has a zero corner we must have $\chi_p(\bfa + e_p) = 0$ for some $p \in S^\prime$
and so $U_{S^\prime}(\bfa;\bfd)$ has a zero corner.
\end{proof}

\section{Polarizations define shellable simplicial complexes}
\label{sec:LQ}

In this section we show that the Alexander dual of any polarization of the power $(x,y,z)^m$ is a monomial ideal with linear quotients.
This is equivalent to the polarization defining a shellable
simplicial complex via the Stanley-Reisner correspondence, \cite[Prop.8.2.5]{HH11}. By a result of Bj\"orner \cite[Thm.11.4]{Bj95} this immediately
implies that these polarizations define simplicial balls, see 
Lemma \ref{lem:conj-codimone} and Subsection \ref{subsec:conj-ball}.

\medskip
An element $(a,b,c)$ in $\Del_3(n-1)$ corresponds to an up-triangle
in $\Del_3(n)$. If $x_\alpha \in X(a+1,b,c)$ we say that $x_\alpha$
(or just $\alpha)$ is an $x$-variable belonging to the up-triangle
$U(a,b,c)$. Similarly if $y_\beta \in Y(a,b+1,c)$ and $z_\gamma \in
Z(a,b,c+1)$. We also say the monomial $x_\alpha y_\beta z_\gamma$ (or
just $\alpha \beta \gamma$) belongs
to $(a,b,c)$.

\begin{lemma} \label{lem:lq-push} Suppose $\alpha \beta \gamma$ belongs to the
up-triangle $U(a+1,b,c)$ in $\Del_3(n-1)$, see Figure \ref{fig:LQ-U2}. 

a. Then either the up-triangle $U(a,b+1,c)$ or the up-triangle $U(a,b,c+1)$
has a monomial
$\alpha^\prime \beta \gamma$ belonging to them.

b. If $\alpha$ either belongs to the up-triangle $U(a,b+1,c)$ or to
$U(a,b,c+1)$, then $\alpha \beta \gamma$ will belong to one of the up-triangles
$U(a,b,c+1)$ or in $U(a,b+1,c)$.

\end{lemma}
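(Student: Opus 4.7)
The key geometric observation is that the three up-triangles $U(a+1,b,c)$, $U(a,b+1,c)$, $U(a,b,c+1)$ share their ``middle'' vertices pairwise, and these three shared vertices $(a+1,b+1,c)$, $(a+1,b,c+1)$, $(a,b+1,c+1)$ are exactly the vertices of the down-triangle $D = D(a+1,b+1,c+1)$ lying in $\Delta_3(n+1)$. So the plan is to apply Theorem \ref{thm:LS-XD} to $D$: at most one of its three edges can fail to be an LS-edge, which translates into at most one of the equalities
\[ X(a+1,b+1,c) = X(a+1,b,c+1), \quad Y(a+1,b+1,c) = Y(a,b+1,c+1), \quad Z(a+1,b,c+1) = Z(a,b+1,c+1) \]
being violated.

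For part a, I would first note that the hypotheses $\beta \in Y(a+1,b+1,c)$ and $\gamma \in Z(a+1,b,c+1)$, together with isotonicity of $Y$ and $Z$, immediately yield $\beta \in Y(a,b+2,c)$ and $\gamma \in Z(a,b,c+2)$. Unpacking the definition of ``$\alpha^\prime\beta\gamma$ belongs to $U(a,b+1,c)$'' (resp.\ $U(a,b,c+1)$), the only nontrivial remaining conditions are $\gamma \in Z(a,b+1,c+1)$ (resp.\ $\beta \in Y(a,b+1,c+1)$). These are exactly the LS-equalities on the two edges of $D$ incident to the vertex $(a,b+1,c+1)$, so by the spanning-tree property at least one of them must hold. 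I can then pick any $\alpha^\prime$ from the correspondingly non-empty set $X(a+1,b+1,c)$ or $X(a+1,b,c+1)$ (non-empty by the rank-preserving property of $X$, since $a+1 \geq 1$).

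For part b, by the symmetry between $b$ and $c$ I may assume $\alpha \in X(a+1,b+1,c)$. Then $\alpha\beta\gamma$ lies in $U(a,b+1,c)$ precisely when the LS-equality in $Z$ holds, and it lies in $U(a,b,c+1)$ precisely when both the LS-equality in $X$ holds (to push $\alpha$ into $X(a+1,b,c+1)$) and the LS-equality in $Y$ holds. If the $Z$-equality holds we are done with $U(a,b+1,c)$; otherwise the other two must both hold and we are done with $U(a,b,c+1)$. The sub-case $\alpha \in X(a+1,b,c+1)$ is entirely analogous.

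The main (modest) obstacle is keeping the bookkeeping of the three isotone maps, the three relevant edges of $D$, and the three membership conditions for each target up-triangle aligned correctly. Once the down-triangle $D(a+1,b+1,c+1)$ is recognised as the right object, both parts follow mechanically from the spanning-tree criterion and isotonicity.
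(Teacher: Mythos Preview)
Your proposal is correct and follows essentially the same approach as the paper: both identify the down-triangle $D(a+1,b+1,c+1)$ and use that at least two of its three edges are LS-edges, combined with isotonicity of $X,Y,Z$. The only cosmetic difference is that in part~b the paper case-splits on whether the $(2,3)$-edge of $D$ is an LS-edge, whereas you case-split (via symmetry) on which lower triangle contains $\alpha$; also note that the membership conditions $\gamma \in Z(a,b+1,c+1)$, etc.\ are \emph{implied by} (not equivalent to) the corresponding LS-equalities, but only that direction is needed, so your argument goes through.
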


\begin{proof}
Consider the up-triangles in Figure \ref{fig:LQ-U2}.
\begin{figure}
\begin{tikzpicture}
\draw[->](-1.2,2.4)--(0,2.4) node[anchor=east] at (-1.2,2.4){$U(a+1,b,c)$};
\draw[->](-2.2,0.8)--(0,0.8) node[anchor=east] at (-2.2,0.8){$D(a+1,b+1,c+1)$};
\draw (-2,0)--(2,0);
\draw (-1,1.6)--(1,1.6);
\draw (-2,0)--(0,3.2) node[anchor=north] at (0,3.1) {$\alpha$};
\draw (2,0)--(0,3.2) ;
\draw (0,0)--(-1,1.6) node[anchor=south west] at (-.9,1.5) {$\beta$};
\draw (0,0)--(1,1.6) node[anchor=south east] at (.9,1.5) {$\gamma$};
\draw (0,0)--(-1,1.6) node[anchor=south west] at (-1.9,0) {$\beta$};
\draw (0,0)--(1,1.6) node[anchor=south east] at (1.9,0) {$\gamma$};
\end{tikzpicture}
\caption{}
\label{fig:LQ-U2}
\end{figure}
In the middle we have a down-triangle $D(a+1,b+1,c+1) \in \Del_3(n)$.
Note that since $Y$ is isotone, $\beta$ will be in both the up-triangles $U(a+1,b,c)$ and $U(a,b+1,c)$
and since $Z$ isotone $\gamma$ in both $U(a+1,b,c)$ and $U(a,b,c+1)$.

a. If the edge $((a+1,b+1,c+1);1,2)$ is a linear syzygy edge, then
also $\gamma$ belongs to $U(a,b+1,c)$, and if 
$((a+1,b+1,c+1);1,3)$ is
a linear syzygy edge then $\beta$ belongs to $U(a,b,c+1)$. Since at least one of them is a linear syzygy edge we are done.

b. If $((a,b+1,c+1);2,3)$ is a linear syzygy edge, $\alpha$ is either in none
or in both the two lower up-triangles.
It then follows by part a that $\alpha \beta \gamma$ belongs
to one of these up-triangles.

If $((a,b+1,c+1);2,3)$ is not a linear syzygy edge, the two other
edges are linear syzygy edges. By the argument in
part a, both the lower up-triangles contains $\beta$ and $\gamma$ and
so at least on of them contains $\alpha \beta \gamma$.
\end{proof}

Let $\Xv$ be a set of $x$-variables (with various indices) and $\Yv$
and $\Zv$ be sets of $y$- and $z$-variables. 

\begin{lemma} \label{lem:lq-var}
 Let $I$ be an ideal
generated by a subset of monomials in the product set $\Xv\cdot \Yv \cdot 
\Zv$. Let $x_\alpha y_\beta z_\gamma$
be in $\Xv\cdot \Yv \cdot \Zv$ but not in $I$. Then $I: x_\alpha y_\beta z_\gamma$ is
generated by variables iff for every $x_{\alpha^\prime} y_{\beta^\prime}
z_{\gamma^\prime} \in I$ one of the variables $x_{\alpha^\prime}, y_{\beta^\prime}$
or $z_{\gamma^\prime}$ is in the colon ideal.
\end{lemma}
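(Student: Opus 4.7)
The plan is to prove both implications directly from the definition of the colon ideal, using one preliminary observation: since $x_\alpha y_\beta z_\gamma \notin I$, none of the three variables $x_\alpha$, $y_\beta$, $z_\gamma$ can lie in $I : x_\alpha y_\beta z_\gamma$. Indeed, if for instance $x_\alpha$ were in the colon, then $x_\alpha^2 y_\beta z_\gamma$ would be in $I$; but the generators of $I$ all lie in the rainbow product $\Xv \cdot \Yv \cdot \Zv$, so any generator dividing $x_\alpha^2 y_\beta z_\gamma$ would have to be $x_\alpha y_\beta z_\gamma$ itself, contradicting $x_\alpha y_\beta z_\gamma \notin I$.

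For the forward direction, assume the colon is generated by variables and fix a generator $\bfm = x_{\alpha'} y_{\beta'} z_{\gamma'}$ of $I$. I will consider the canonical squarefree element of the colon
\[
\bfm' \;=\; \lcm(\bfm,\, x_\alpha y_\beta z_\gamma)\,/\,(x_\alpha y_\beta z_\gamma),
\]
which is a product of exactly those variables among $x_{\alpha'}, y_{\beta'}, z_{\gamma'}$ whose index differs from the corresponding one in $x_\alpha y_\beta z_\gamma$. It is nontrivial: otherwise $\bfm$ would divide $x_\alpha y_\beta z_\gamma$, and since both have degree $3$ we would get $\bfm = x_\alpha y_\beta z_\gamma \in I$, again a contradiction. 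Hence $\bfm'$ is divisible by some variable in the colon, and that variable is forced to be one of $x_{\alpha'}, y_{\beta'}, z_{\gamma'}$.

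For the reverse direction, assume that for every generator of $I$ one of its three variables lies in the colon. Take any monomial $\bfn$ in the colon; then $\bfn \cdot x_\alpha y_\beta z_\gamma \in I$, so some minimal generator $\bfm = x_{\alpha'} y_{\beta'} z_{\gamma'}$ of $I$ divides $\bfn \cdot x_\alpha y_\beta z_\gamma$. By hypothesis some variable $v \in \{x_{\alpha'}, y_{\beta'}, z_{\gamma'}\}$ lies in the colon. By the preliminary observation, $v \notin \{x_\alpha, y_\beta, z_\gamma\}$; since the color classes $\Xv, \Yv, \Zv$ are pairwise disjoint and $v$ divides $\bfn \cdot x_\alpha y_\beta z_\gamma$ without dividing $x_\alpha y_\beta z_\gamma$, it must divide $\bfn$. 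Thus every monomial in the colon is a multiple of some variable in the colon, so the colon is generated by variables.

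Once the preliminary observation is extracted, each direction is essentially a short bookkeeping step; I anticipate no substantive obstacle.
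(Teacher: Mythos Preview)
Your proof is correct and follows essentially the same approach as the paper: both begin with the observation that none of $x_\alpha, y_\beta, z_\gamma$ can lie in the colon, treat the forward direction as routine, and for the reverse direction pick a generator of $I$ dividing the relevant product and apply the hypothesis to extract a variable in the colon. Your reverse direction is in fact a bit cleaner than the paper's, which argues only for a representative degree-two element $y_{\beta'} z_{\gamma'}$ of the colon rather than an arbitrary monomial $\bfn$.
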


\begin{proof} Note that by the construction of $I$ and definition of
$x_\alpha y_\beta z_\gamma$, none of the
variables $x_\alpha, y_\beta$, or $z_\gamma$ can be in $I:x_\alpha y_\beta
z_\gamma$.

   That the first assertions implies the second is easy. Assume the
 second assertion holds. Then, if say $y_{\beta^\prime} z_{\gamma^\prime}$ is
 in the colon ideal, then $x_\alpha y_\beta y_{\beta^\prime} z_\gamma z_{\gamma^\prime}$ is
 in $I$. So at least some $x_\alpha y_{\tilde{\beta}} z_{\tilde{\gamma}}$ 
 is in $I$, where $\tilde{\beta} = \beta^\prime$ or $\tilde{\gamma} = \gamma^\prime$. But by assumption then either $y_{\beta^\prime}$ or
 $z_{\gamma^\prime}$ is in the colon ideal. This implies the colon ideal
 is generated by variables.
\end{proof}

We now consider the monomials $x_\alpha y_\beta z_\gamma$ belonging
to the up-triangles $U(a,b,c) \in \Del_3(n-1)$ and shall provide a total
order on these monomials.
First consider the partial order on {\it triples} where $(a,b,c) \geq (a^\prime,
b^\prime, c^\prime)$ if $a \geq a^\prime$ and take any linear extension
on this to get a total order $\succeq$ on triples.

Now for each up-triangle $U(a,b,c)$ we shall make a total order on
the (degree $3$) {\it monomials} belonging to it. 
For each $X(a+1,b,c)$ choose any total order of the $x$-variables.
To order the variables in $Y(a,b+1,c)$ we have an ascending chain
\begin{equation} \label{eq:lq-Y} 
Y(a,1,n-a-1) \sus Y(a,2,n-a-2) \sus \cdots \sus Y(a,n-a,0).
\end{equation}
We order the variables such that each new variable popping up in the
chain is less than the foregoing variables.
Similarly for the variables in $Z(a,b,c+1)$ we have a chain
\[  Z(a,n-a-1,1) \sus Z(a,n-a-2,2) \sus \cdots \sus Z(a,0,n-a), \]
and we order the variables such that each new variables popping up in the
chain is less than the foregoing variables.
The monomials belonging to $U(a,b,c)$ correspond to 
\[ X(a+1,b,c) \times Y(a,b+1,c) \times Z(a,b,c+1). \]
We get the partial product order on this 
and take a linear extension of this partial order.

We now order the monomials associated to the up-triangles in $\Del_3(n)$
as follows. If $\alpha^\prime \beta^\prime \gamma^\prime$ occurs
first in $(a^\prime, b^\prime, c^\prime)$ and $\alpha \beta \gamma$ occurs first
in $(a,b,c)$, then
\begin{equation} \label{eq:ShellOrder}
  \alpha^\prime \beta^\prime \gamma^\prime > \alpha \beta \gamma
\end{equation}
if $(a^\prime, b^\prime, c^\prime) \succ (a,b,c)$, or if
$(a^\prime, b^\prime, c^\prime) = (a,b,c)$ and the order of 
\eqref{eq:ShellOrder} is given
by the order on the monomials belonging to the up-triangle $U(a,b,c)$.

\begin{proposition}
The ideal generated by all the variables belonging to the up-triangles
of $\Del_3(n)$, has linear quotients given by the ordering of the monomials
above.
\end{proposition}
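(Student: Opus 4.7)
The plan is to verify the linear quotients property using the criterion in Lemma \ref{lem:lq-var}. For each generator $m = x_\alpha y_\beta z_\gamma \in I$ belonging to an up-triangle $U(a,b,c)$ and each generator $m' = x_{\alpha'}y_{\beta'}z_{\gamma'} > m$ belonging to some $U(a',b',c')$, I will produce a generator $m''$ of $I$ that exceeds $m$ in our ordering and differs from $m$ in exactly one variable, this differing variable being $\alpha'$, $\beta'$ or $\gamma'$. The corresponding variable of $m'$ will then lie in the colon ideal $(I_{>m}):m$, and Lemma \ref{lem:lq-var} will give the conclusion.

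The argument will split into two cases. In the case $(a',b',c') = (a,b,c)$, both $m$ and $m'$ live in $X(a+1,b,c) \times Y(a,b+1,c) \times Z(a,b,c+1)$, and the ordering on $U(a,b,c)$ is by construction a linear extension of the product order on this triple product. Since $m' > m$ in the linear extension, $m'$ cannot be componentwise $\leq m$, so at least one coordinate of $m'$ strictly exceeds the corresponding coordinate of $m$ in the chosen linear order on that color class. Swapping only that coordinate of $m$ for the corresponding variable of $m'$ yields a monomial $m''$ in $U(a,b,c)$ that already exceeds $m$ in the product order, hence in the linear extension.

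In the case $(a',b',c') \succ (a,b,c)$, I will combine the isotonicity of $X, Y, Z$ with Lemma \ref{lem:lq-push} to transport information about the variables of $m'$ from $U(a',b',c')$ to an up-triangle $U(a'',b'',c'') \succ (a,b,c)$ in which the desired $m''$ can be built as a one-variable swap from $m$. Concretely, starting from $m'$ and iterating Lemma \ref{lem:lq-push}(a) while preserving two of the three variables, one can migrate between adjacent up-triangles in a way that steadily brings the swap closer to $(a,b,c)$, eventually landing on a monomial $m''$ whose up-triangle still precedes $(a,b,c)$ in the total order.

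The hard part will be the subcase $a' = a$ with $(b',c') \neq (b,c)$, since then $(a',b',c') \succ (a,b,c)$ only by the chosen linear extension of the first-coordinate partial order, not by the partial order itself, and up-triangle precedence alone is not strong enough. Here I will exploit the specific chosen orderings on $Y(a,b+1,c)$ and $Z(a,b,c+1)$---in which variables popping up later in the nested chains $Y(a,1,n-a-1) \subset \cdots \subset Y(a,n-a,0)$ and $Z(a,n-a-1,1) \subset \cdots \subset Z(a,0,n-a)$ are declared smaller than the earlier ones---to guarantee that the $m''$ produced in a neighboring up-triangle at the same level $a$ inherits the correct relative position. The careful design of these intra-class orderings, top-down along the chains, is precisely what makes this last step go through and is where the main technical bookkeeping of the proof will be concentrated.
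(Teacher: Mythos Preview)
Your framework is the paper's: use Lemma \ref{lem:lq-var}, and for each $m'>m$ produce a one-variable swap $m''>m$ using a variable of $m'$. Your treatment of the case where $m$ and $m'$ first occur in the same up-triangle is fine, and you correctly identify the same-level case $a'=a$, $(b',c')\neq(b,c)$ as the place where the carefully chosen chain orderings on the $Y$- and $Z$-variables are needed.

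The gap is in the case $a'>a$. Your plan is to start from $m'=x_{\alpha'}y_{\beta'}z_{\gamma'}$ and iterate Lemma \ref{lem:lq-push}(a) to migrate down. But part (a) says only that \emph{some} $\alpha''\beta'\gamma'$ belongs to a lower up-triangle; you have no control over $\alpha''$. After descending to level $a$ you hold a monomial $\alpha''\beta'\gamma'$ that typically differs from $m=\alpha\beta\gamma$ in two or three colors, not one, so it does not place any single variable of $m'$ in the colon ideal. The paper's actual maneuver is different and uses ingredients you do not mention. First, isotonicity of $X$ gives $\alpha\in X(a'+1,b',c')$ when $b'\le b$ and $c'\le c$, so the \emph{hybrid} $\alpha\beta'\gamma'$ already belongs to $U(a',b',c')$; one then pushes this hybrid down using Lemma \ref{lem:lq-push}(b), which preserves $\alpha$ precisely because $\alpha$ is known to sit in an adjacent lower triangle. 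Second, one also pushes $m=\alpha\beta\gamma$ \emph{sideways} along level $a$ (increasing $c$, say), and here it is the hypothesis that $m$ \emph{first occurs} in $U(a,b,c)$ that forbids $m$ from escaping upward at each step; this first-occurrence bookkeeping is used repeatedly and does real work (it also rules out $\alpha=\alpha'$ in the endgame). Your outline invokes neither part (b) of Lemma \ref{lem:lq-push} nor the first-occurrence property, and without them the migration you sketch does not reach a one-variable swap.
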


\begin{proof}
Let $\alpha \beta \gamma$ occur for the first time in the up-triangle
$(u,b,c)$ and let $I$ be the ideal generated by all the larger monomials.
We shall show that $I:x_\alpha y_\beta z_\gamma$ is generated by variables and
use Lemma \ref{lem:lq-var}.

\medskip
1. Let $\alpha^\prime \beta^\prime \gamma^\prime$ be in $(u,b^\prime,c^\prime)$
where $(u,b^\prime, c^\prime) \succ (u,b,c)$. Suppose $c^\prime \geq c$, see Figure \ref{fig:LQ-Uh}.
and so $\gamma$ belongs to $(u,b^\prime, c^\prime)$, since the map
$Z : \Delta_3(n) \pil B(n)$ is isotone.

\begin{figure}
\begin{tikzpicture}
\draw (-1,0)--(1,0);
\draw (-1,0)--(0,1.6);
\draw (1,0)--(0,1.6);
\draw[loosely dotted] (1.4,0.6)--(2.6,0.6);
\draw (3,0)--(5,0);
\draw (3,0)--(4,1.6);
\draw (5,0)--(4,1.6);
\draw node[anchor=north] at (0,1.5) {$\alpha$};
\draw node[anchor=south west] at (-.9,-0.1) {$\beta$};
\draw node[anchor=south east] at (.9,-0.1) {$\gamma$};
\draw node[anchor=north] at (4,1.5) {$\alpha^\prime$};
\draw node[anchor=south west] at (3.1,-0.1) {$\beta^\prime$};
\draw node[anchor=south east] at (4.8,-0.1) {$\gamma^\prime$};
\draw node[anchor=north] at (0,-0.1) {$(u,b,c)$};
\draw node[anchor=north] at (4,-0.1) {$(u,b^\prime,c^\prime)$};
\end{tikzpicture}
\caption{}
\label{fig:LQ-Uh}
\end{figure}

a. If $\beta \geq \beta^\prime$ then $\beta$ will be in
$U(u,b^{\prime \prime}, c^{\prime \prime})$ where $b^{\prime \prime}
\leq  b^\prime$ so $c^{\prime \prime} \geq c^\prime$.
Then $\beta$ will also be in $U(u,b^\prime, c^\prime)$ and
since $\gamma$ is in $U(u,b^\prime, c^\prime)$ we will have
$\alpha^\prime \beta \gamma$ belonging to $U(u,b^\prime, c^\prime)$.
If $\alpha = \alpha^\prime$ then $\alpha\beta\gamma$ would occur in
$U(u,b^\prime, c^\prime)$ contradicting that $\alpha\beta\gamma$ first
occurs in $U(u,b,c)$.
So $\alpha \neq \alpha^\prime$ and this gives $x_{\alpha^\prime} $ in the colon ideal.

b. Assume now that $\beta < \beta^\prime$. Note that since $b^\prime \leq b$
we have $\beta^\prime$ belonging to $U(u,b,c)$. 
Then $\alpha \beta^\prime
\gamma $ is already in $I$ by the ordering on the monomials belonging
to $U(u,b,c)$, and hence $\beta^\prime $ is in the colon ideal.

\medskip
2. A symmetric argument works when $\alpha^\prime \beta^\prime
\gamma^\prime $ is in $U(u,b^\prime, c^\prime)$ and $b^\prime \geq b$. 

\medskip
3. Assume now that $\alpha^\prime \beta^\prime \gamma^\prime$ belongs to
the up-triangle $U(u+1,b^\prime, c^\prime)$ where the sum of these
coordinates is $n-1$. Either $b^\prime \geq b$ or $c^\prime \geq c$.
Suppose the latter, see Figure \ref{fig:LQ-Une}.
Then $\beta^\prime$ belongs to the up-triangle $U(u,b,c)$ due to $Y$ being
isotone.

\begin{figure}
\begin{tikzpicture}
\draw (-1,0)--(1,0);
\draw (-1,0)--(0,1.6);
\draw (1,0)--(0,1.6);
\draw[loosely dotted] (1.4,1.6)--(2.6,1.6);
\draw (3,1.6)--(5,1.6);
\draw (3,1.6)--(4,3.2);
\draw (5,1.6)--(4,3.2);
\draw node[anchor=north] at (0,1.5) {$\alpha$};
\draw node[anchor=south west] at (-.9,-0.1) {$\beta$};
\draw node[anchor=south east] at (.9,-0.1) {$\gamma$};
\draw node[anchor=north] at (4,3.1) {$\alpha^\prime$};
\draw node[anchor=south west] at (3.1,1.5) {$\beta^\prime$};
\draw node[anchor=south east] at (4.8,1.5) {$\gamma^\prime$};
\draw node[anchor=north] at (0,-0.1) {$(u,b,c)$};
\draw node[anchor=north] at (4,1.5) {$(u+1,b^\prime,c^\prime)$};
\end{tikzpicture}
\caption{}
\label{fig:LQ-Une}
\end{figure}

a. If $\beta^\prime > \beta$ in the order given by \eqref{eq:lq-Y}, then 
$\alpha {\beta^\prime}\gamma >
\alpha \beta \gamma$ and so the former belongs to $I$ and 
$\beta^\prime$ is in the colon ideal $I : x_\alpha y_\beta z_\gamma$.

b. If $\beta^\prime =  \beta$ note that $y_\beta = y_{\beta^\prime}$ is in $Y(u,b-1,c+1)$
since $b > b^\prime$. By Lemma \ref{lem:lq-push} (applied in the $y$-direction, not $x$-direction) either $\alpha \beta \gamma$ is in
$U(u+1,b,c)$ or in $U(u,b-1,c+1)$. The latter must be the case since 
$\alpha \beta \gamma$ first occurs in $U(u,b,c)$.


c. If $\beta^\prime < \beta$, then since $\beta^\prime$ belongs to 
$U(u,b,c)$, $\beta$ must belong to $U(u,b-1,c+1)$ and by Lemma \ref{lem:lq-push}
$\alpha \beta \gamma$ will belong to $U(u,b-1,c+1)$.

In case b. and c. we may continue like this and push $\alpha \beta \gamma$
stepwise to the right, until we get to $(u,b-r,c+r)$ where 
 $c+r = c^\prime +1$, and
$(u,b-r,c+r) = (u,b^\prime, c^\prime + 1)$, so
$\alpha \beta \gamma$ is in both $U(u,b^\prime +1, c^\prime)$ and $U(u,b^\prime,
c^\prime + 1)$. Note that by $X$ being isotone $\alpha$ belongs to $(u+1,b^\prime,c^\prime)$.
We show that one of $\alpha^\prime, \beta^\prime$ or $\gamma^\prime$
is in the colon ideal.

\begin{itemize}
    \item[i.] If $\beta = \beta^\prime$ and $\gamma = \gamma^\prime$
    then $\alpha^\prime \beta \gamma$ belongs to $U(u+1,b^\prime, c^\prime)$.
    Since $\alpha \beta \gamma$ occurs first in $U(u,b,c)$, we cannot have 
    $\alpha = \alpha^\prime$ and so $x_{\alpha^\prime}$ is in the colon ideal.
    \item[ii.] If $\beta \neq \beta^\prime$ and $\gamma = \gamma^\prime$
    then $\alpha \beta^\prime \gamma$ belongs to $U(u+1,b^\prime, c^\prime)$
    and so $y_{\beta^\prime}$ is in the colon ideal.
    \item[iii.] If $\beta = \beta^\prime$ and $\gamma \neq \gamma^\prime$
    then $\alpha \beta \gamma^\prime$ belongs to $U(u+1,b^\prime,c^\prime)$
    and so $z_{\gamma^\prime}$ is in the colon ideal.
    \item[iv.] Suppose that $\beta \neq \beta^\prime$ and $\gamma \neq  
    \gamma^\prime$. If the edge $((u+1,b^\prime + 1, c^\prime +1);1,2)$ is a linear syzygy edge then $\gamma$ is in $Z(u+1,b^\prime, c^\prime + 1)$ and so $\alpha \beta^\prime \gamma$ belongs to $U(u+1,b^\prime, c^\prime)$.
If $((u+1,b^\prime + 1, c^\prime + 1);1,3)$ is a linear syzygy edge then
$\beta \in Y(u+1,b^\prime +1, c^\prime)$ and so $\alpha \beta \gamma^\prime$
is belongs to $U(u+1,b^\prime,c^\prime)$.
\end{itemize}

\medskip
4. Suppose then that $\alpha^\prime \beta^\prime \gamma^\prime$ is in
$U(u+r,b^\prime, c^\prime)$ where $r \geq 2$.

a. If $b^\prime \leq b$ and $c^\prime \leq c$ (then at least one inequality
is strict) then
$\alpha \beta^\prime \gamma^\prime$ is in $U(u+r,b^\prime, c^\prime)$
since the map $X$ is isotone,
and $\alpha$ also belongs to either $U(u+r-1,b^\prime +1, c^\prime)$ or
$U(u+r-1,b^\prime, c^\prime + 1)$. Hence  by Lemma \ref{lem:lq-push} $\alpha \beta^\prime
\gamma^\prime$ is in one of these up-triangles.
We may continue until either $u+r-1 = u+1$, treated in Case 3., or
until $b^\prime > b$ or $c^\prime > c$. Assume $c^\prime > c$.

b. We then assume $\alpha^\prime \beta^\prime \gamma^\prime$ is in
$(u+r,b^\prime, c^\prime)$ where $r \geq 2$ and $c^\prime > c$.
Note that by $Y$ being isotone and $b^\prime < b$, $\beta^\prime$ will belong to $U(u,b,c)$ and to $U(u,b-1,c+1)$. 

\begin{itemize}
\item[b1.] If $\beta^\prime > \beta$, then $\alpha \beta^\prime \gamma >
\alpha \beta \gamma$ and so $y_{\beta^\prime}$ is in the colon ideal.

\item[b2.] If $\beta = \beta^\prime$ then $\beta$ belongs to 
$U(u+r,b^\prime, c^\prime)$. By $Y$ being isotone, $\beta$ belongs to
$U(u,b-1,c+1)$.

\item[b3.] If $\beta^\prime < \beta$, then $\beta$ is in the up-triangle
$U(u,b-1,c+1)$. In both cases b2 and b3, by Lemma \ref{lem:lq-push} $\alpha \beta \gamma$ is either in up-triangle $U(u+1,b-1,c)$, not possible, or in $U(u,b-1,c+1)$.
\end{itemize}

In this way we may continue going rightwards until we get to
$(u,b-t,c+t)$ with $c+t =  c^\prime$. Then $(u,b^\prime + r, c^\prime)$
contains $\alpha \beta \gamma$ and so $\alpha $ is in $(u+r,b^\prime, c^\prime)$
and $(u+r-1,b^\prime +1, c^\prime)$. Then $\alpha \beta^\prime \gamma^\prime$
is in $(u+r,b^\prime, c^\prime)$ and since $\alpha \beta \gamma$ occurs 
first in $U(a,b,c)$ this is not equal to $\alpha \beta^\prime \gamma^\prime$. By Lemma \ref{lem:lq-push} we may 
push it down to level
$u+r-1$. In this way we can continue until we get $\alpha \beta^\prime
\gamma^\prime$ on level $u+1$ which is treated in Case 3.
\end{proof}

\providecommand{\bysame}{\leavevmode\hbox to3em{\hrulefill}\thinspace}
\providecommand{\MR}{\relax\ifhmode\unskip\space\fi MR }
\providecommand{\MRhref}[2]{%
  \href{http://www.ams.org/mathscinet-getitem?mr=#1}{#2}
}
\providecommand{\href}[2]{#2}

\end{document}